\documentclass[pdflatex,sn-mathphys-num]{sn-jnl}

\usepackage{graphicx}
\usepackage{multirow}
\usepackage{amsmath,amssymb,amsfonts}
\usepackage{amsthm}
\usepackage{mathrsfs}
\usepackage[title]{appendix}
\usepackage{xcolor}
\usepackage{textcomp}
\usepackage{booktabs}
\usepackage{tabularx}
\usepackage{url}
\usepackage{tikz}
\usepackage{subcaption}
\usetikzlibrary{arrows.meta,angles,quotes,positioning,calc,trees}

\theoremstyle{thmstyleone}
\newtheorem{theorem}{Theorem}
\newtheorem{lemma}[theorem]{Lemma}
\newtheorem{proposition}[theorem]{Proposition}
\newtheorem{corollary}[theorem]{Corollary}

\theoremstyle{thmstyletwo}

\newtheorem{remark}{Remark}

\theoremstyle{thmstylethree}
\newtheorem{definition}{Definition}

\newcommand{\conv}{\mathrm{conv}}
\newcommand{\R}{\mathbb{R}}
\newcommand{\predlines}[1]{\begin{minipage}[t]{\linewidth}\raggedright #1\end{minipage}}

\begin{document}

\title[Wetzel's Triangle Covers Unit Arcs]{Wetzel's $30^{\circ}\!-\!60^{\circ}\!-\!90^{\circ}$ Triangle Covers Unit Arcs}

\author[1]{\fnm{Wacharin} \sur{Wichiramala}}\email{wacharin.w@chula.ac.th}
\affil[1]{\orgdiv{Department of Mathematics and Computer Science}, Faculty of Science, \orgname{Chulalongkorn University}, \orgaddress{\street{Phayathai Road}, \city{Bangkok}, \postcode{10330}, \country{Thailand}}}

\author*[2]{\fnm{Chatchawan} \sur{Panraksa}}\email{chatchawan.pan@mahidol.ac.th}
\affil*[2]{\orgdiv{Applied Mathematics Program}, \orgname{Mahidol University International College}, \orgaddress{\street{999 Phutthamonthon 4 Road Salaya}, \city{Nakhonpathom}, \postcode{73170}, \country{Thailand}}}

\abstract{John E. Wetzel conjectured that the $30^{\circ}\!-\!60^{\circ}\!-\!90^{\circ}$ triangle $T$ obtained by placing a square of side $1/3$ on the hypotenuse covers every unit arc in the plane. We give a computer-assisted proof of this conjecture with independently checkable interval certificates. The proof reduces a hypothetical noncovered arc to a finite family of $599$ closed second-order cone models, covering all representative and raw tail-order branches, and certifies a polygonal-chain lower bound greater than one in every model by interval validation of stored dual certificates. Since every certified lower endpoint exceeds $1.0048$, the homothetic copy $T/1.0048$ still covers every unit arc. Its area is $0.260956\ldots$, below the area $\pi/12\approx0.261799$ of the $30^{\circ}$ unit sector, a certified area improvement over the sector cover within this convex Wetzel-cover setting.}

\keywords{Unit arc covers, Wetzel's conjecture, Convex geometry, Support lines, Computational geometry}
\pacs[MSC Classification]{52C15, 52A10, 52A38, 68U05}

\maketitle

\section{Introduction}\label{sec:intro}

Covering problems for plane curves of fixed length ask for small convex sets that contain a congruent copy of every unit-length arc; they are instances of Moser's worm problem~\cite{moser1966,moser1991} and of Wetzel's \emph{fits and covers} program.

Two conjectures of John E.~Wetzel from the early 1970s assert that a specific $30^{\circ}\!-\!60^{\circ}\!-\!90^{\circ}$ triangle $T$ and the $30^{\circ}$ unit sector each cover every unit arc. Wetzel's study of triangular covers goes back to the closed-curve case~\cite{wetzel1970}. The triangle conjecture is recorded as \cite[Conjecture~2]{wetzel2003} (with the square-on-hypotenuse description used below) and as \cite[Conjecture~4.2]{norwood1992}; the sectorial program goes back to \cite{wetzel1973}, with the sector conjecture first reported in \cite{norwood1992} and \cite[p.~358]{wetzel2003}. The triangle $T$ studied here is not the $30^{\circ}\!-\!60^{\circ}\!-\!90^{\circ}$ triangle of \cite[Conjecture~4.3]{norwood1992}, whose covering claim was refuted (reported in \cite[p.~358]{wetzel2003}). The sectorial conjecture is resolved: the $30^{\circ}$ unit sector covers all unit arcs \cite{panraksa2020}, giving the previous Wetzel-program convex-cover benchmark of area $\pi/12 \approx 0.261799$. The sector theorem of \cite{panraksa2020} proves the sector cover; the present paper proves the distinct triangular cover conjecture for the square-on-hypotenuse triangle $T$, and the certified margin then permits a homothetic shrink of $T$ whose area is below that of the unit sector.

\begin{figure}[htbp]
    \centering
    \includegraphics[width=0.8\textwidth]{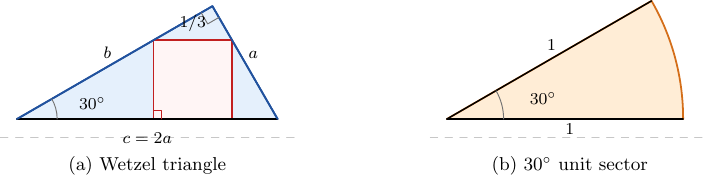}
    \caption{The two covering candidates conjectured by Wetzel: (a) The $30^{\circ}\!-\!60^{\circ}\!-\!90^{\circ}$ triangle $T$ with inscribed square of side $1/3$, and (b) the $30^{\circ}$ unit sector.}
    \label{fig:contenders}
\end{figure}
\subsection{Main results}

Our main theorem is as follows.

\begin{theorem}\label{thm:main}
Let $T$ be the $30^{\circ}\!-\!60^{\circ}\!-\!90^{\circ}$ triangle whose hypotenuse supports a square of side $1/3$, with side lengths
\[
a=\frac{3+4\sqrt{3}}{18},\qquad b=\frac{4+\sqrt{3}}{6},\qquad c=2a.
\]
Every unit arc in $\mathbb{R}^2$ is contained in a congruent copy of $T$.
\end{theorem}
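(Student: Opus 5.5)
We argue by contradiction and reduce the statement to a finite family of certified length bounds. Suppose some unit arc $\gamma$ lies in no congruent copy of $T$. Only the convex hull matters, so we replace $\gamma$ by $K=\conv(\gamma)$. We use the standard support-line description: $T$ covers $\gamma$ precisely when, for some rotation angle $\theta$, the widths of $K$ in the three directions normal to the sides of $T$ (rotated by $\theta$) fit inside $T$. Equivalently, the triangle circumscribed about $K$ with those side directions is a translate of a homothet $\lambda T$ with $\lambda\le 1$.

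The function $\lambda(\theta)$ is continuous, so noncovering means $\lambda(\theta)>1$ for every $\theta\in[0,2\pi)$. We then use this at finitely many sampled angles, together with the $30^\circ$ and $60^\circ$ symmetries of the normal directions. At each chosen angle, record the support points of $\gamma$ on the three sides of the circumscribed triangle. Since $\gamma$ is an arc, it visits these touching points in some order, and between consecutive visits its length is at least the Euclidean distance. Hence
\[
1=\mathrm{length}(\gamma)\ \ge\ \text{length of the polygonal chain through the support points in visiting order}.
\]
The hypothesis $\lambda>1$ at the chosen angles pushes the support lines outward. So the support points lie in closed regions described by linear inequalities in their coordinates, with the support-line offsets as auxiliary variables.

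For each combinatorial branch (the choice of angles and the order in which $\gamma$ visits the support points, including the endpoints of $\gamma$, the "tail" points), minimizing the chain length under these constraints is a convex program. Its objective is a sum of Euclidean norms and its constraints are linear, so it is a second-order cone program. Its optimal value $L$ must be at least $1$ under noncoverage, but we want to show $L>1$, which is a contradiction. Each branch is certified by exhibiting a dual feasible solution whose objective value exceeds one. We validate dual feasibility and the bound in interval arithmetic, so floating-point error cannot affect the conclusion. Weak duality then gives the rigorous lower bound.

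The main obstacle is making the case split finite and exhaustive. We must choose enough sampling angles that every hypothetical bad arc produces constraints strong enough to force $L>1$. We must also account for every possible visiting order and for degenerate coincidences of support points, including arcs whose endpoints are the extreme points. My first attempt would take three or four rotation angles spaced at about $30^\circ/k$. I would use the symmetry of $T$ to identify equivalent orders and keep a representative set. Where a representative's bound is too weak, I would refine by splitting parameter ranges into closed subcases, iterating until every model certifies a bound above $1$ with a uniform margin. That margin then also gives the scaled cover mentioned in the abstract.
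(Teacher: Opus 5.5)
Your computational endgame matches the paper's. The paper's escape rows are exactly ``$\lambda>1$'' for a circumscribed copy of $T$: the $30^\circ$ corner is placed at the intersection of two support lines $30^\circ$ apart, and some support point must lie beyond the short side. The bound is the parameter-ordered chain, and each closed SOCP is certified by an interval-validated dual vector.

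The gap is upstream: you give no mechanism that makes a finite family of such SOCPs exhaustive, and the one you sketch would fail. Your sample angles are fixed in the ambient frame, while $K$ can sit in any rotation. Suppose some unit arc is not covered at any sampled orientation. Its own support points, taken in its own visiting order, are then a feasible point of one of your branches, with chain length $\le 1$, so that branch cannot certify. The samples must therefore hit the set of covering orientations of every unit arc in every rotation. Continuity of $\lambda$ gives no uniform lower bound on the size of those sets unless you already know a quantitative covering margin, which is what is being proved. Even the paper's arc-adapted relaxation clears $1$ by only about half a percent, so three or four angles ``spaced at about $30^\circ/k$'' should not be expected to work. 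A fine grid, in turn, makes enumerating every visiting order of the resulting support points hopeless. Separately, congruent copies include mirror images, and the normal fan of $T$ has no $30^\circ$ or $60^\circ$ rotational symmetry. So $T$ and its reflection must be tested separately, as the paper's primed placements do.

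What is missing is an arc-adapted frame plus order information derived from simplicity of the arc. The paper uses the $\Lambda$-property: there are parallel support lines, one touching $\gamma$ at $P_{-3},P_3$ and the other at $P_0$, with $P_{-3}\prec P_0\prec P_3$ along $\gamma$. Rotating this pair to horizontal makes every special direction exact relative to the arc ($\pm30^\circ,\dots,\pm150^\circ$, and $\pm15^\circ,\pm105^\circ$ for the wedge). Every escape row is then affine with constant coefficients, with no residual rotation parameter. The frame also supplies the double floor contact and the up-and-down alternation that the chain bounds rest on; a fixed generic direction never sees these.

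The paper then uses support-contact unimodality and a Jordan-curve interleaving exclusion, which needs simplicity. When the $\pm120^\circ,\pm150^\circ$ contacts are inner, these force the middle order $P_{-3}\preceq P_{-2}\preceq P_{-1}\preceq P_0\preceq P_1\preceq P_2\preceq P_3$. They also exclude the mixed pair (outer $150^\circ$, inner $120^\circ$). Only the tail orders are then enumerated ($6$, $2\times 6$, $24\times 24$), which is how the tree closes at $599$ models.
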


The numerical Mathematica/SOCP lower-bound computation has its smallest representative value in Subcase~1.1a,
\[
L_{1.1a}^{\rm nb}=1.0048313701863976\ldots.
\]
The independent interval certificate in Appendix~\ref{app:opt} proves a more conservative but rigorous endpoint for the same closed model,
\[
L_{1.1a}^{\rm cert}=1.0048290490\ldots,
\]
using the main refined representative certificate radius $q=0.999999$. Across all $599$ representative and raw-order branch models, this is the weakest certified endpoint; the refined raw-order minimum is $1.0057165531\ldots$. Thus the supplementary computation also certifies the stronger margin $L_i^{\rm cert}>1.0048$ for every closed branch model. The following remark records the resulting computational scaling consequence separately; the proof of Theorem~\ref{thm:main} itself uses only the inequalities $L_i^{\rm cert}>1$.

\begin{remark}[Certified computational scaling margin]\label{rem:conditional-scale}
The triangle $T$ can be homothetically shrunk by the factor $s=1/1.0048$ to area $0.260956$, smaller than the $30^{\circ}$ unit sector area $\pi/12 \approx 0.261799$, and still cover every unit arc. This is stated and proved as Proposition~\ref{prop:scaled-cover} in Section~\ref{sec:quant}.
\end{remark}

Appendix~\ref{app:opt} records the notebook data for the representative models, the independent interval certificates, and the expanded raw-order certificates.

\subsection{Proof architecture}

Our approach rests on three pillars. \emph{(i)} We first use the Wetzel--Wichiramala covering theorem~\cite[Cor.~5]{wetzel2010} to reduce the covering problem to simple polygonal unit arcs, then work in the oriented support-line framework and use the $\Lambda$-property to control contact configurations: parallel- and angle-support versions interlace touch triples at prescribed directions, constraining escape patterns. \emph{(ii)} Via a compactness reduction, with closed limiting models allowed, we pass to weak support-reduced obstructions and a finite family of normalized configurations of $T$. \emph{(iii)} For each certified branch model $i$, we encode the geometry as a constrained optimization problem on a feasible set determined by contact, ordering, and escape constraints, and we minimize a polygonal lower bound for arc length over that set. Appendix~\ref{app:opt} records independent interval certificates for the sufficient inequalities $L_i^{\rm cert}>1$ on all corresponding closed feasible models, and Lemma~\ref{lem:row-audit} certifies, as part of the machine-checked verification, that every model row coincides with the affine form of its forcing placement.

The case tree is organized by which points are touched by support lines at angles $\pm 150^\circ$ and $\pm 120^\circ$. Three main branches yield twelve representative terminal subcases, including a direct both-late Subcase~1.1 model. To avoid any unproved Case~3 tail-order exclusion, the supplementary computation certifies every Case~3 order pairing of the four right-tail contacts $P_4,P_5,P_7,P_8$ with the four reflected left-tail contacts, beyond the three pairings among the representative models; in total the proof uses $599$ certified closed models, $12$ representative and $587$ auxiliary raw-order. In each certified branch, $T$ is placed in canonical orientations, the arc is forced to escape across the short side, and the polygonal chain in the recorded contact order is minimized. Figure~\ref{fig:proof-map} summarizes the pipeline.

\begin{figure}[t]
\centering
\begin{tikzpicture}[
  >=Latex,
  node distance=7mm and 8mm,
  box/.style={draw, rounded corners, align=center, font=\small, fill=blue!6, minimum width=4.9cm, minimum height=8mm},
  arrow/.style={->, thick}
]
\node[box] (u) {All unit arcs of length $1$};
\node[box, below=of u] (p) {Wetzel--Wichiramala reduction\\to simple polygonal arcs};
\node[box, below=of p] (h) {Weak support-reduced\\obstructions};
\node[box, below=of h] (c) {Canonical placements of $T$\\and forced escape inequalities};
\node[box, below=of c] (k) {Finite case tree\\($599$ certified closed models)};
\node[box, below=of k] (o) {Polygonal-chain lower bounds\\and interval certificates};
\node[box, below=of o, fill=green!10] (m) {Certified $L_i^{\rm cert}>1$ in every subcase\\$\Rightarrow$ contradiction with $\ell(\gamma)\le 1$};
\draw[arrow] (u) -- (p);
\draw[arrow] (p) -- (h);
\draw[arrow] (h) -- (c);
\draw[arrow] (c) -- (k);
\draw[arrow] (k) -- (o);
\draw[arrow] (o) -- (m);
\end{tikzpicture}
\caption{Proof pipeline from structural reduction to the numerical lower-bound stage.}
\label{fig:proof-map}
\end{figure}

\subsection{Related work and context}

Wetzel initiated the fits-and-covers program in the early 1970s with the triangular covers paper for closed curves \cite{wetzel1970} and the sectorial covers paper \cite{wetzel1973}, posing conjectures for both the $30^{\circ}$ sector and the $30^{\circ}\!-\!60^{\circ}\!-\!90^{\circ}$ triangle. The sectorial conjecture was resolved by Panraksa and Wichiramala \cite{panraksa2020}, building on sectorial bounds \cite{wetzel2019} and drapeability arguments.\footnote{The worm problem and Wetzel's conjectures have also attracted popular attention; see, e.g., the science feature on the sector proof by van den Brandhof~\cite{nrc2020} in the Dutch newspaper \emph{NRC Handelsblad}.} A recent survey of the worm problem and of the program completed here is given by Movshovich \cite{movshovich2025}.

Structural ingredients for the triangle problem include the $\Lambda$-property and its consequences for interlaced support-line contacts, as well as drapeability, $\Lambda$-frame, and escape-path viewpoints \cite{maki2005,movshovich2017,movshovich2015,movshovich2020,movshovich2011,movshovich2021zarcs,movshovich2025frames}. Covering results for convex arcs and piecewise-linear arcs appear in work by Johnson, Poole, and Wetzel \cite{johnson2004}, Wichiramala \cite{wichiramala2010}, and others \cite{panraksa2007a,panraksa2007b,sroysang2008}.

Within the broader worm-problem landscape---seeking small regions covering every unit-length planar curve---Norwood, Poole, and Laidacker \cite{norwood1992} established influential upper bounds, and later unrestricted, nonconvex covers lowered the known area further \cite{norwoodpoole2003,ploymaklam2018}. These nonconvex covers should be distinguished from convex covers: in particular, the nonconvex cover of area $0.26007$ of Ploymaklam and Wichiramala \cite{ploymaklam2018} is not directly comparable to convex covers such as the sector or the triangle $T$, and the area statements in this paper are scoped to the convex triangular Wetzel cover. For convex covers, the bounds prior to the sector theorem were $0.232239<\alpha\le 0.27091$: the lower bound is due to Khandhawit, Pagonakis, and Sriswasdi \cite{khandhawit2013}, the upper bound to Wang \cite{wang2006}, and further bounds appear in \cite{wetzel2013}. Grechuk and Som-am \cite{grechuk2020} study a related lower-bound problem for convex covers of closed unit curves. Wetzel's survey \cite{wetzel2003} provides background and open directions. Classical covering themes, including sector and rhombus covers and the lineage of area bounds, trace back to Gerriets and Poole \cite{gerriets1974} and to foundational work of Lebesgue, Blaschke, and Besicovitch \cite{lebesgue1914,blaschke1916,besicovitch1965}. Besicovitch triangles that cover unit arcs were studied by Coulton and Movshovich \cite{coulton2006}.

\subsection{Organization}

Section~\ref{sec:prelim} sets up the support-line framework, the $\Lambda$-property, and the reduction to weak support-reduced obstructions. Section~\ref{sec:triangle} fixes the triangle $T$, the canonical orientations, and the escape predicate. Section~\ref{sec:cases} presents the case tree and proves Theorem~\ref{thm:main}; the optimization models and interval validation are in Appendix~\ref{app:opt}, and the auxiliary geometric proofs in Appendix~\ref{app:lemmas}. Section~\ref{sec:quant} records the certified computational scaling margin, and Section~\ref{sec:conclusion} concludes.

\subsection{Proof dependency checklist and terminology}\label{subsec:dependency-checklist}

Because the proof combines geometric reductions with machine-checked certificates, we list which statements are which.

\begin{itemize}
  \item \emph{Geometric statements, proved in the text or in Appendix~\ref{app:lemmas}:} the polygonal reduction and weak-obstruction compactness (Lemmas~\ref{lem:closed-contact-order}--\ref{lem:reduced-obstruction}), the support-contact order input and its consequences (Theorem~\ref{thm:imported-support-contact}, Proposition~\ref{prop:support-input}, Lemmas~\ref{lem:middle-skeleton}--\ref{lem:contact-alternatives}; the cyclic normal-fan order is tabulated in Table~\ref{tab:cyclic-order}), the wedge-shortening and tail-order enumerations (Lemmas~\ref{lem:wedge}--\ref{lem:case3-order-reduction}), the forced canonical placements (Lemmas~\ref{lem:canonical-tests} and~\ref{lem:forced-placements}), and the case-tree exhaustion (Proposition~\ref{prop:case-tree}).
  \item \emph{Computational statements, certified by the supplementary package:} the lower bounds $L_i^{\rm cert}>1$ for all $599$ closed models (Theorem~\ref{thm:socp-cert}, Proposition~\ref{prop:dual-certificate}, Tables~\ref{tab:interval-cert} and~\ref{tab:raw-order-cert}) and the row-to-placement soundness of the model rows (Lemma~\ref{lem:row-audit}). Both are validated by the one-command verifier of Appendix~\ref{app:repro}, which checks stored certificates and interval reports; rerunning the optimizers is not part of the proof.
  \item \emph{Branch-to-model traceability:} Table~\ref{tab:branch-model-ledger} in Appendix~\ref{app:branch-grammar} maps each branch to its model ids, chain order, and certificate log.
\end{itemize}

Throughout, a \emph{closed model} is a finite SOCP whose feasible set uses weak (non-strict) escape inequalities; the twelve \emph{representative models} are the terminal subcases displayed in the case tree (eleven notebook models plus the direct both-late model 1.1c); the $587$ \emph{raw-order models} certify the nonrepresentative Case~2 and Case~3 tail orders directly. ``Certified'' always refers to the interval-validated dual lower bounds of Appendix~\ref{app:opt}.

\section{Preliminaries}\label{sec:prelim}

An \emph{arc} is a rectifiable simple curve in $\R^2$; its length is denoted by $\ell(\gamma)$.
Support lines are always taken with respect to the arc or compact set under consideration.

\subsection{Support lines and directional distance}\label{subsec:support-distance}

For an angle $\theta \in (-\pi,\pi]$ we denote by
\[
e_\theta = (\cos\theta,\sin\theta), 
\qquad
n_\theta = (\sin\theta,-\cos\theta)
\]
the unit vector in direction $\theta$ and the unit normal obtained by rotating $e_\theta$ clockwise by $90^\circ$.
With this choice, the closed half-plane on the left of an oriented line in direction $\theta$ is described by an inequality of the form $\langle X,n_\theta\rangle\le h$.

Let $\gamma \subset \R^2$ be a compact set, in particular an arc image or a weak obstruction curve. A line $L$ is a \emph{support line} for $\gamma$ if there exists a closed
half-plane $H$ bounded by $L$ such that $\gamma \subset H$ and $L\cap\gamma\neq\emptyset$.
We say that an oriented support line has \emph{direction}~$\theta$ if it is parallel to $e_\theta$ and $\gamma$ lies in the
closed half-plane on its left.

\begin{definition}[Oriented support lines]
For each angle $\theta\in(-\pi,\pi]$ we define the oriented support line $L_\theta$ (when it exists) by
\[
L_\theta = \{ X\in\R^2 : \langle X, n_\theta\rangle = h_\theta\},
\]
where
\[
h_\theta = \max_{X\in\gamma} \langle X, n_\theta\rangle
\]
is the corresponding \emph{support offset}. Thus $\gamma$ lies in the closed half-plane
\[
H_\theta = \{X\in\R^2 : \langle X, n_\theta\rangle \le h_\theta\},
\]
and $L_\theta$ \emph{touches} $\gamma$ at a point $X\in\gamma$ if $X\in L_\theta$.
\end{definition}

With this convention,
\[
L_{0^\circ}=\{(x,y)\in\R^2: y=\min_{Z\in\gamma} y\},
\qquad
L_{180^\circ}=\{(x,y)\in\R^2: y=\max_{Z\in\gamma} y\}.
\]
In the normalized $\Lambda$-configuration used throughout, $L_{0^\circ}$ is the lower and $L_{180^\circ}$ the upper horizontal support.

\begin{definition}[Directional distance]
For a point $X=(x,y)\in\R^2$ and an angle $\theta\in(-\pi,\pi]$, the \emph{directional distance} of $X$ in
direction~$\theta$ is
\[
d(X,\theta) \;=\; \langle X, e_\theta\rangle \;=\; x\cos\theta + y\sin\theta.
\]
For a vector $\overrightarrow{AB}=B-A$, we write
\[
d(\overrightarrow{AB},\theta) \;=\; d(B-A,\theta) \;=\; \langle B-A, e_\theta\rangle.
\]
\end{definition}

An oriented support line $L_\theta$ \emph{supports} a triangle placement $T$ if one of its edges lies on $L_\theta$; escape predicates and case-split inequalities are expressed via $d(\overrightarrow{AX},\theta)$, matching the optimization constraints of Appendix~\ref{app:opt}.
The support and escape vectors are deliberately different: support functions use $n_\theta$, escape predicates use $e_\theta$.

\begin{table}[t]
\centering
\caption{Vector convention for the main special angles.}\label{tab:angle-vectors}
\scriptsize
\setlength{\tabcolsep}{5pt}
\renewcommand{\arraystretch}{1.08}
\begin{tabular}{@{}ccc@{}}
\toprule
\textbf{Angle} & \textbf{Escape vector $e_\theta$} & \textbf{Support normal $n_\theta$} \\
\midrule
$0^\circ$ & $(1,0)$ & $(0,-1)$ \\
$90^\circ$ & $(0,1)$ & $(1,0)$ \\
$120^\circ$ & $(-1/2,\sqrt3/2)$ & $(\sqrt3/2,1/2)$ \\
$150^\circ$ & $(-\sqrt3/2,1/2)$ & $(1/2,\sqrt3/2)$ \\
$180^\circ$ & $(-1,0)$ & $(0,1)$ \\
\bottomrule
\end{tabular}
\end{table}

\subsection{Lambda-property and special contact points}\label{subsec:lambda}

Let $\gamma\subset\R^2$ be a simple rectifiable arc with endpoints $P,Q$.
By the \emph{$\Lambda$-property} of simple arcs \cite[Thm.~6]{alexander2019} (see also \cite[Lem.~2.1]{panraksa2020} and the parallel-support discussion in~\cite{movshovich2015}), there exist points
\[
P_{-3},\;P_0,\;P_3\in \gamma
\]
occurring along $\gamma$ in this order, and two parallel support lines $L_{0^\circ},L_{180^\circ}$, such that
\begin{itemize}
  \item $\gamma$ lies between $L_{0^\circ}$ and $L_{180^\circ}$, with $L_{0^\circ}$ the lower support and $L_{180^\circ}$ the upper support;
  \item $L_{0^\circ}$ meets $\gamma$ at the two lower contact points $P_{-3}$ and $P_3$;
  \item $L_{180^\circ}$ meets $\gamma$ at the upper contact point $P_0$;
  \item the order of these contacts along $\gamma$ is $P_{-3}$--$P_0$--$P_3$.
\end{itemize}
We normalize the picture by choosing coordinates so that $L_{0^\circ}$ is the line $y=0$ and $L_{180^\circ}$ is the
line $y=H$ for some gap $H>0$, and by reflecting in a vertical line if necessary so that
\[
  x(P_{-3})\le x(P_3).
\]
In the contradiction argument below this positive-gap normalization loses no obstruction: a unit line segment is contained in $T$, since the hypotenuse has length
\[
c=\frac{3+4\sqrt3}{9}>1.
\]
Thus any noncovered unit arc is not a straight segment, and the horizontal $\Lambda$-normalization may be taken with $H>0$.
With this normalization the arc $\gamma$ is naturally decomposed into three
subarcs:
\begin{itemize}
  \item the left tail $\gamma_{P P_{-3}}$ from the left endpoint $P$ to the first lower contact $P_{-3}$,
  \item the middle segment $\gamma_{P_{-3}P_3}$, which crosses from $L_{0^\circ}$ at $P_{-3}$ to $L_{180^\circ}$ at $P_0$
        and back to $L_{0^\circ}$ at $P_3$, and
  \item the right tail $\gamma_{P_3 Q}$ from the last lower contact $P_3$ to the right endpoint $Q$.
\end{itemize}

Figure~\ref{fig:lambda} illustrates this basic configuration.

\begin{figure}[t]
  \centering
  \includegraphics[width=0.6\columnwidth]{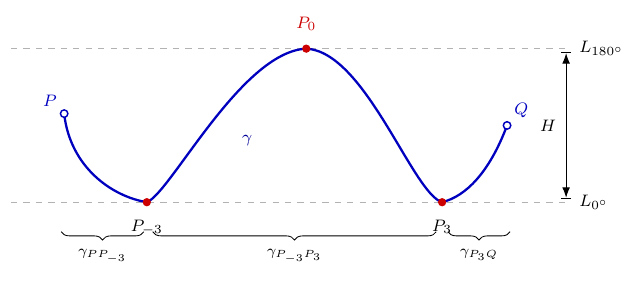}
  \caption{The $\Lambda$-property for a simple arc $\gamma$: parallel horizontal support lines
  $L_{0^{\circ}}$ (lower) and $L_{180^{\circ}}$ (upper) touch $\gamma$ at three points
  $P_{-3},P_0,P_3$, decomposing $\gamma$ into a left tail $\gamma_{P P_{-3}}$, a middle segment
  $\gamma_{P_{-3}P_3}$ passing through $P_0$, and a right tail $\gamma_{P_3 Q}$.}
  \label{fig:lambda}
\end{figure}

The angle-support form of the $\Lambda$-property (see \cite[Thm.~6]{alexander2019} and \cite[Lem.~2.1]{panraksa2020}) gives, for any $\theta\in(0,\pi)$, at least one pair of support lines forming angle $\theta$ and touching
$\gamma$ in an alternating triple---one line meets $\gamma$ twice and the other once, with the three contacts in order along $\gamma$---and when two such pairs occur their angle vertices interlace along the arc.

Since the $\Lambda$-property is load-bearing, we record its provenance precisely. We use exactly two statements about a simple arc: the parallel-support form displayed above and the angle-support interlacing form of the preceding paragraph. Both are proved for simple arcs in \cite[Thm.~6]{alexander2019}; the parallel-support form is also recorded in the published account \cite[Lem.~2.1]{panraksa2020}. We apply them only to the simple polygonal approximating arcs supplied by the Wetzel--Wichiramala reduction \cite[Cor.~5]{wetzel2010}. The support-contact order input (Theorem~\ref{thm:imported-support-contact}) and all of its consequences are proved self-containedly in Appendix~\ref{app:unimodality}, so no unpublished source is load-bearing for the contact-order machinery.

\medskip\noindent
Following \cite{panraksa2020}, fix the ten \emph{special angles}
\[
\theta \in \{\pm 30^{\circ},\; \pm 60^{\circ},\; \pm 90^{\circ},\; \pm 120^{\circ},\; \pm 150^{\circ}\},
\]
all measured in degrees with respect to the horizontal axis.
The closed models use the following finite witness vocabulary. The global support-contact order is applied only to the horizontal $\Lambda$ contacts and to the special-angle roles above, especially the $\pm120^\circ$ and $\pm150^\circ$ roles entering Proposition~\ref{prop:support-input}. The delimiter directions are
\[
\Theta_{\rm del}=\{0^\circ,180^\circ,\pm90^\circ\},
\]
which fix the lower and upper $\Lambda$ supports and the vertical tail contacts. Subcase~1.1 additionally records branch-local wedge witnesses in the finite set
\[
\Theta_{\rm wedge}=\{\pm15^\circ,\pm105^\circ\},
\]
equivalently the line/support labels used by the two steep sides of $T_W$ and $T_{W'}$. These wedge witnesses are carried through compactness as selected delimiter or side-incidence contacts; they are not used to infer any additional global support-contact order. Finally, the affine escape rows use the finite set of vector directions
\[
\Theta_{\rm esc}=\{0^\circ,180^\circ,30^\circ,150^\circ,-30^\circ,-90^\circ,-150^\circ,-75^\circ,-105^\circ\}.
\]
Directions in $\Theta_{\rm esc}$ are vectors in the row $d(\overrightarrow{AX},\theta)\ge b$, not necessarily support-line labels. Thus every support, delimiter, wedge, and escape direction appearing in the closed SOCP models is fixed before the compactness subsequence is taken.
For each arc under discussion we fix an arclength parametrization, still denoted
$\gamma:[0,\ell]\to\R^2$, in the order from $P$ to $Q$.
If $I\subset[0,\ell]$ is a compact parameter interval and $L_\theta$ is a support line, set
\[
  C_\theta(I)=\{t\in I:\gamma(t)\in L_\theta\}.
\]
When $C_\theta(I)\ne\emptyset$, define the first and last contacts on $I$ by
\[
  X_\theta^-(I)=\gamma(\min C_\theta(I)),\qquad
  X_\theta^+(I)=\gamma(\max C_\theta(I)).
\]
For $I=[0,\ell]$ we write simply $C_\theta$ and $X_\theta^\pm$.
For nondegenerate branches these endpoints provide canonical witnesses; in closed models, any selected witness on the exposed support face may be used, and coincident or exposed-face cases are weak-order degeneracies.
This is compatible with the support-contact parameter $T(\theta)$ used by Wichiramala~\cite[Lemmas~1--3]{wichiramala2019a}.
We use the following closed witness convention throughout. A named contact is a selected witness parameter on a prescribed closed subarc interval; if a support face is a segment, the witness is the first or last contact on that interval when an endpoint delimiter is required, and otherwise any selected point on the exposed face. If two selected witnesses coincide, or if several witnesses lie on one exposed segment, their recorded order is the corresponding weak equality. Passing from strict notebook inequalities to closed model rows always enlarges the feasible obstruction class.
The limiting form used below is the following closed finite-order principle. For a compact convex hull $K=\conv(\gamma)$ and the support normal $n_\theta$, write
\[
  C_{\theta,K}^{\varepsilon}(I)=
  \{t\in I: h_K(n_\theta)-\langle \gamma(t),n_\theta\rangle\le \varepsilon\},
\]
where $h_K$ is the support function of $K$.

\begin{lemma}[Closed witness-contact order]\label{lem:closed-contact-order}
Let $\gamma_n:[0,1]\to\R^2$ be constant-speed parametrizations of simple polygonal arcs of length at most one, extended constantly if necessary, and suppose that $\gamma_n$ converges uniformly to a rectifiable limiting curve $\gamma$. Let $K_n=\conv(\gamma_n)$ converge in the Hausdorff metric to $K=\conv(\gamma)$. Fix finitely many support normals $u_j$, closed parameter intervals $I_j$, and selected witness contact parameters $t_j^{(n)}\in I_j$ satisfying
\[
\langle\gamma_n(t_j^{(n)}),u_j\rangle=h_{K_n}(u_j).
\]
If the selected witnesses satisfy a finite list of weak parameter-order relations $t_i^{(n)}\le t_j^{(n)}$, then after passing to a subsequence $t_j^{(n)}\to t_j\in I_j$, the points $P_j=\gamma(t_j)$ lie on the support faces of $K$ in directions $u_j$, and every recorded weak order relation is preserved.
\end{lemma}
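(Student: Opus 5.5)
The plan is a compactness-plus-continuity argument, carried out in three steps.

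\emph{Step 1 (extracting limits).} Each $I_j$ is a compact interval and there are finitely many indices $j$. Passing successively to subsequences, one index at a time, I get a single subsequence along which $t_j^{(n)}\to t_j$ for every $j$, with $t_j\in I_j$ because $I_j$ is closed. Each relation $t_i^{(n)}\le t_j^{(n)}$ is a closed condition on $\R^2$, so its limit $t_i\le t_j$ holds. This gives the preservation of every recorded weak order relation at once. Strict relations may of course degenerate to equalities, which is why only weak orders are claimed.

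\emph{Step 2 (convergence of the points).} I estimate
\[
|\gamma_n(t_j^{(n)})-\gamma(t_j)|\le \|\gamma_n-\gamma\|_\infty+|\gamma(t_j^{(n)})-\gamma(t_j)|.
\]
The first term tends to zero by uniform convergence. The second tends to zero because $\gamma$ is continuous, being a uniform limit of continuous maps. In fact the $\gamma_n$ are $1$-Lipschitz, since they have constant speed and length at most one, so $\gamma$ is $1$-Lipschitz as well. Hence $\gamma_n(t_j^{(n)})\to P_j=\gamma(t_j)$.

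\emph{Step 3 (the support identity).} The support function is continuous under Hausdorff convergence. Indeed, $|h_{K_n}(u)-h_K(u)|\le d_H(K_n,K)\,|u|$, which follows directly from $K_n\subset K+\varepsilon B$ and $K\subset K_n+\varepsilon B$. Passing to the limit in $\langle\gamma_n(t_j^{(n)}),u_j\rangle=h_{K_n}(u_j)$ therefore gives $\langle P_j,u_j\rangle=h_K(u_j)$. Since $P_j\in\gamma\subset K$, the point $P_j$ lies on the support face $K\cap\{\langle X,u_j\rangle=h_K(u_j)\}$.

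I also check the hypothesis $K=\conv(\gamma)$. It is assumed, but it is automatic: uniform convergence gives Hausdorff convergence of the images, and taking convex hulls is $1$-Lipschitz for $d_H$.

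The only subtle point is interpretive. The limit curve $\gamma$ need not be simple, and several witnesses may collapse to the same parameter. So the conclusion must be read as a statement about contacts on the support faces of $K$ with weak order, and not as a claim that $P_j$ is the first or last contact $X_\theta^{\pm}$ of $\gamma$. I would state this explicitly, consistent with the closed witness convention above, and no further obstacle arises.
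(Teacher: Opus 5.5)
Your proposal is correct and follows essentially the same route as the paper's proof: extract a common subsequence of the finitely many witness parameters in their closed intervals, pass the weak order relations to the limit as closed conditions, and pass the support identity to the limit using uniform convergence of the curves together with continuity of support functions under Hausdorff convergence. Your version makes a few points explicit that the paper leaves implicit, namely the estimate $|h_{K_n}(u)-h_K(u)|\le d_H(K_n,K)\,|u|$, the observation $P_j\in K$, and the remark that $K=\conv(\gamma)$ is automatic; the argument itself is the same.
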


The limiting curve may have additional support contacts not selected by these witness data. They are irrelevant for the finite certificate, since the closed SOCP models are built from the selected witness contacts and therefore enlarge the obstruction class rather than shrink it.

We use the following consequence throughout. For the approximating polygonal arcs, the endpoint-contact function of Theorem~\ref{thm:imported-support-contact} is unimodal over a full period anchored at the hull vertex of smallest parameter, and at the lower horizontal support its two values are the parameters of $P_{-3}$ and $P_3$. Anchoring the sweep at these two lower-support values forces the one-sided role alternatives of Lemma~\ref{lem:contact-alternatives}: in particular, on the positive side, if the selected $L_{120^\circ}$ witness is inner (strictly before $P_3$ in parameter), then the selected $L_{150^\circ}$ witness is inner as well, since the opposite pattern would create a second local maximum of the contact function (Appendix~\ref{app:lemmas}). The reflected statement holds on the negative side. This is the imported input used in Lemma~\ref{lem:contact-alternatives}.

The symbols $P_k$ below are role labels selected as support witnesses on specified subarcs, not arbitrary choices from a multiple-contact set.
If a branch requires an endpoint witness, the notation uses $X_\theta^-(I)$ or $X_\theta^+(I)$ for the first or last contact of $L_\theta$ on the corresponding parameter interval $I$.
Coincident contacts and exposed support segments are treated as weak-order degeneracies of the same branch.
Depending on the branch, additional tail points are also introduced. The integral index set used in this paper is
\[
k\in\{-8,-7,-6,-5,-4,-3,-2,-1,0,1,2,3,4,5,6,7,8\},
\]
with not all indices active in every subcase. Subcase~1.1 also uses the auxiliary half-index labels
\[
P_{\pm2.5},\qquad P_{\pm3.5},\qquad P_{\pm6.5}.
\]
These are not additional global special-angle roles. They are endpoint witnesses introduced only after the canonical $T_W$ test has fixed the relevant wedge-side incidences on the active closed subarcs. The roles are:
\begin{itemize}
  \item $P_{-3},P_0,P_3$ are the three horizontal contacts from the $\Lambda$-configuration;
  \item $P_{\pm6}$ are vertical-support contacts ($L_{\pm90^\circ}$);
  \item $P_{\pm2},P_{\pm1}$ are inner contacts on $\gamma_{P_{-3}P_3}$ used in Cases~1 and~2;
  \item $P_{\pm7},P_{\pm8}$ are outer contacts that drive the $\pm120^\circ$ and $\pm150^\circ$ branches;
  \item $P_{\pm4},P_{\pm5}$ are auxiliary tail contacts used to enumerate and certify the Case~2 and Case~3 raw tail orders.
  \item In Subcase~1.1a, $P_{-2.5}$ and $P_{2.5}$ are the two $T_W$ side-incidence contacts on the middle intervals between $P_{-3},P_{-2}$ and between $P_2,P_3$, while $P_{-3.5}$ and $P_{3.5}$ are the corresponding tail escape contacts between $P_{-6},P_{-3}$ and between $P_3,P_6$. Their branch order is
        \[
        P_{-6},P_{-3.5},P_{-3},P_{-2.5},P_{-2}
        \quad\text{and}\quad
        P_2,P_{2.5},P_3,P_{3.5},P_6 .
        \]
        In Subcase~1.1b, $P_{-6.5}$ is the one-sided $T_W$ side-incidence contact selected on the left tail before the vertical contact; the reflected/right analogue is treated by the same convention. In Subcase~1.1c, both late contacts are retained: $P_{-6.5}$ lies on the left $T_W$ side before $P_{-6}$, and $P_{6.5}$ lies on the right $T_W$ side after $P_6$. The branch order is
        \[
        P_{-6.5},P_{-6},P_{-3}
        \quad\text{and}\quad
        P_3,P_6,P_{6.5}.
        \]
\end{itemize}

Along the middle segment, indices are fixed so that
\[
P_{-3},\;P_{-2},\;P_{-1},\;P_0,\;P_1,\;P_2,\;P_3
\]
occur in this order (Section~\ref{subsec:monotonicity}).
Figure~\ref{fig:indexmap} gives a schematic index map used in later case splits.

\begin{figure}[htbp]
    \centering
    \includegraphics[width=0.95\columnwidth]{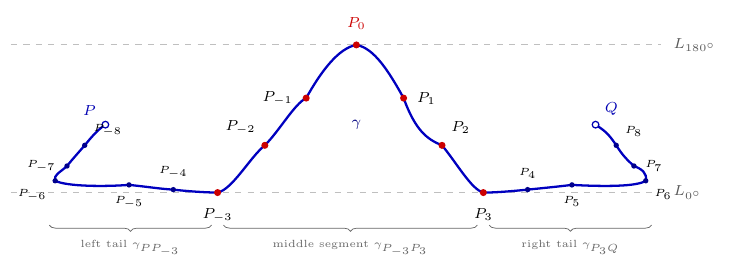}
    \caption{Index map of contact points on a unit arc~$\gamma$.
    The arc runs between two horizontal support lines $L_{0^\circ}$ and $L_{180^\circ}$.
    The red points $P_{-3}$, $P_0$, $P_3$ are the $\Lambda$-contacts;
    the orange points $P_{\pm1},P_{\pm2}$ record the middle contact order used in Cases~1 and~2;
    the blue tail points $P_{\pm k}$ ($k=4,\dots,8$) record the vertical, outer, and auxiliary tail contacts used in later branch splits.
    Braces indicate the decomposition into left tail, middle segment, and right tail.}
    \label{fig:indexmap}
\end{figure}

The facts used later are:
\begin{itemize}
  \item for each special angle $\theta\in\{\pm30^\circ,\pm60^\circ,\pm90^\circ,\pm120^\circ,\pm150^\circ\}$ there is at
        least one support contact, and any named contact is selected as a support witness on its assigned subarc, with endpoint witnesses used only when the branch role specifies one;
  \item the selected middle support-contact skeleton occurs in weak normal-fan order; in particular,
        \[
          P_{-3},\,P_{-2},\,P_{-1},\,P_0,\,P_1,\,P_2,\,P_3
        \]
        occur in this weak parameter order (Lemma~\ref{lem:middle-skeleton}, Figure~\ref{fig:monotone}).
\end{itemize}

\subsection{Weak obstructions and support-role data}\label{subsec:hull-survey}

The finite case analysis needs only limiting support-role data and length lower semicontinuity, supplied by the following compactness object.

\begin{definition}[Weak obstruction]\label{def:weak-obstruction}
A \emph{weak obstruction of length at most one} is a tuple
\[
\bigl(\eta;\,t_1,\ldots,t_N\bigr),
\]
where $\eta:[0,1]\to\R^2$ is a uniform limit of constant-speed simple polygonal arcs of length at most one (extended constantly when the length is smaller), and $t_1,\ldots,t_N$ are finitely many selected witness parameters, each lying in its prescribed closed interval $I_j\subseteq[0,1]$ and each satisfying the support-face, delimiter, side-incidence, or escape-row condition assigned to it by the stabilized role data in the finite vocabulary fixed above: the special-angle roles, the delimiter directions $\Theta_{\rm del}$, the branch-local wedge witnesses $\Theta_{\rm wedge}$ when the branch uses them, and the named escape-vector witnesses $\Theta_{\rm esc}$ recorded in the SOCP rows. The limiting curve $\eta$ is not assumed to be simple, and its length is bounded by lower semicontinuity of arclength. If $C$ is a compact convex set, the weak obstruction is \emph{noncovered} by $C$ when no congruent copy of $C$ contains $\conv(\eta)$. We often refer to the weak obstruction by its curve $\eta$, the witness list being understood as part of the data.
\end{definition}

\begin{lemma}[Weak-hull reduction]\label{lem:reduction}
Let $C\subset\R^2$ be a compact convex set.
If $C$ fails to cover some unit arc, then there is a weak obstruction $\gamma$ of length at most one whose convex hull is a Hausdorff limit of convex hulls of simple polygonal arcs of length at most one and remains noncovered by $C$.
\end{lemma}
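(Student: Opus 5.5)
We argue by approximation and compactness. Suppose $C$ fails to cover some unit arc $\gamma_0$. By the Wetzel--Wichiramala reduction \cite[Cor.~5]{wetzel2010}, covering all unit arcs is equivalent to covering all simple polygonal arcs of length at most one. More precisely, if every simple polygonal arc of length at most one were covered by $C$, then so would be every unit arc. Hence there is a simple polygonal arc $\gamma_1$ of length at most one that $C$ does not cover. Since this step only produces one polygonal obstruction, we instead use the quantitative form: we want a \emph{sequence} of polygonal arcs $\gamma_n$ of length at most one whose hulls stay uniformly noncovered, or else a direct limit argument.

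The cleanest route is to take $\gamma_n=\gamma_1$ constant, or, if the reduction is applied with approximating polygonal arcs $\gamma_n\to\gamma_0$ (inscribed polygons, of length at most $\ell(\gamma_0)\le1$), to use the following closedness fact. The set of compact convex sets $K$ such that some congruent copy of $C$ contains $K$ is closed in the Hausdorff metric. Indeed, if $g_nC\supseteq K_n$ with rigid motions $g_n$, then boundedness of the $K_n$ confines the translations to a compact set. Passing to a subsequence $g_n\to g$, we get $gC\supseteq K$ by compactness of $C$.

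So we proceed as follows. Parametrize each $\gamma_n$ at constant speed on $[0,1]$, extended constantly when shorter. The parametrizations are $1$-Lipschitz and, after translating, uniformly bounded. Arzel\`a--Ascoli then gives a uniformly convergent subsequence with limit $\eta$. Uniform convergence implies Hausdorff convergence of the images, hence of the convex hulls: $\conv(\gamma_n)\to\conv(\eta)$. Lower semicontinuity of length gives $\ell(\eta)\le1$. The witness parameters required by Definition~\ref{def:weak-obstruction} are handled by Lemma~\ref{lem:closed-contact-order}: select the finitely many role witnesses on each $\gamma_n$, pass to a further subsequence, and the limit witnesses keep their support-face conditions and weak order relations. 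Closed escape rows are preserved because they are weak inequalities in continuous quantities.

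Noncoveredness of the limit is the delicate point. If the $\gamma_n$ are inscribed polygons of $\gamma_0$ (vertices on $\gamma_0$), then $\conv(\gamma_n)\subseteq\conv(\gamma_0)$. Thus each $\gamma_n$ is itself noncovered only if we choose $\gamma_n$ differently; rather, we exploit that the limit satisfies $\conv(\eta)=\conv(\gamma_0)$, since inscribed polygons with mesh $\to0$ converge uniformly to $\gamma_0$. Hence $\conv(\eta)$ is not contained in any congruent copy of $C$, which is exactly noncoveredness, and $\eta=\gamma_0$ reparametrized. If instead one wants polygonal $\gamma_n$ each noncovered, use the closedness fact contrapositively: were all but finitely many $\conv(\gamma_n)$ covered, the limit $\conv(\gamma_0)$ would be covered, a contradiction. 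Therefore infinitely many $\gamma_n$ are noncovered, and we restrict to that subsequence. The main obstacle is simplicity: inscribed polygons of a simple arc can self-intersect. I would handle this by invoking \cite[Cor.~5]{wetzel2010} for simple approximants, or by shortcutting self-intersections. Shortcutting reduces length, and it keeps the hull within $\varepsilon$ in the Hausdorff metric only if the cut loops are small, which holds for fine mesh by uniform continuity of $\gamma_0$ together with its simplicity.
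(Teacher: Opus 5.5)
Your constant-sequence route is exactly the paper's proof: take $\gamma_n=\gamma_1$, the simple polygonal noncovered arc supplied by \cite[Cor.~5]{wetzel2010}, then apply Arzel\`a--Ascoli, Hausdorff convergence of the hulls, and lower semicontinuity of length; noncoveredness is immediate because the limit hull is $\conv(\gamma_1)$ itself. The inscribed-polygon alternative, with the Hausdorff-closedness of coverable hulls and the shortcutting of self-intersections, is unnecessary (and its simplicity step is only sketched), so you can drop it along with the claim that noncoveredness is ``the delicate point''.
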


\begin{proof}
See Appendix~\ref{app:lemmas}. Starting from a noncovered simple polygonal unit arc, compactness gives a closed limiting hull and a limiting weak obstruction with length at most one. Only the existence of such a noncovered limiting hull is used below.
\end{proof}

When a weak obstruction is translated and rotated so that the limiting horizontal $\Lambda$-configuration and the distinguished outer support contacts from Section~\ref{subsec:lambda} are expressed in the fixed notation of this paper, we shall call the resulting placement a \emph{normalized weak obstruction}. The $\Lambda$-configuration is chosen on the approximating simple polygonal arcs. Each approximant is first put into its own $\Lambda$-frame by a rigid motion, then a subsequence of these normalized approximants and normalized hulls is passed to the limit. After passing to a further subsequence, the normalizing rigid motions converge: the rotations have a convergent subsequence, the possible reflection choice is finite, and the translations are fixed by the normalization $P_{-3}^{(n)}=0$. Hence the normalized limiting hull is the image of the unnormalized limiting hull under a limiting congruence. Since congruence preserves arclength, convex-hull noncoverage, and all support-order statements after relabeling of the fixed finite directions, the limiting normalized hull is a congruent representative of the same obstruction. The simple-arc $\Lambda$ theorem is therefore used only on the approximating simple arcs, and Lemma~\ref{lem:closed-contact-order} passes the selected finite role data to the weak limit.

For the finite case analysis we use a slightly narrower representative class. The \emph{weak support-role data} of a normalized weak obstruction records, for each special angle used in the case tree, which closed named subarc has a selected support witness; it does not record uniqueness of the contact or any strict escape inequality. The named subarcs are delimited by selected witness contacts in the fixed line directions $0^\circ,180^\circ,\pm90^\circ$, the special line directions appearing in the case tree, and the branch-local wedge directions only when the Subcase~1.1 wedge routing requires them; when the branch requires an endpoint delimiter, that delimiter is a selected endpoint witness. Thus the data include the delimiter choices as well as the incidence choices. Under arclength and Hausdorff convergence, support faces in these fixed line directions and the corresponding selected witness parameters have convergent subsequences, so the finite weak incidences are closed after passing to the stabilized delimiter data. These weak incidences are the only support-role data fixed in the reduction below.

Let $K$ be one normalized limiting hull and let $\mathcal R$ be one stabilized finite weak support-role assignment, with witness slots $j=1,\ldots,N$, prescribed closed parameter intervals $I_1,\ldots,I_N\subseteq[0,1]$, and the delimiter convention for all named subarcs used in the case tree. We write
\[
\mathcal A(K,\mathcal R)\;\subseteq\; C\bigl([0,1],\R^2\bigr)\times I_1\times\cdots\times I_N
\]
for the class of weak obstructions $(\eta;\,t_1,\ldots,t_N)$ with the following properties:
\begin{itemize}
  \item $\eta$ is a uniform limit of constant-speed simple polygonal arcs of length at most one which realize the same stabilized finite weak support-role assignment $\mathcal R$ and whose own witness parameters converge to $t_1,\ldots,t_N$, with constant tails added when the approximating length is less than one;
  \item $\conv(\eta)=K$;
  \item for each slot $j$, the witness parameter satisfies $t_j\in I_j$, the witness point $\eta(t_j)$ satisfies the support-face, delimiter, or side-incidence condition that $\mathcal R$ assigns to slot $j$, and the finitely many weak parameter-order relations $t_i\le t_j$ recorded in $\mathcal R$ hold.
\end{itemize}
The class is used only after a nonempty stabilized assignment has been selected from approximating arcs. It is a closed subset of the product of the uniform topology on equi-Lipschitz curves and the Euclidean topology on $I_1\times\cdots\times I_N$: convex-hull equality is equality of support functions, each role incidence is a closed condition on the pair $(\eta,t_j)$ by Lemma~\ref{lem:closed-contact-order}, and each recorded weak order relation is a closed scalar inequality. Thus all fixed witness data and their closed intervals are visibly part of the compact object, and ``curve plus witness parameters'' is the object to which the compactness arguments below apply.

We record the sequencing of the witness slots explicitly. Support-role and delimiter witness slots are fixed before the case tree is entered, since they arise from the convex-hull support geometry of the approximating arcs. Branch-local escape witness slots are added only after a finite branch has fixed the corresponding canonical placement and closed affine row; they are carried in the same product compactness space as the support-role witnesses, but their defining conditions are closed affine incidence or escape inequalities rather than support-face conditions. Compactness applies in the enlarged product space because every added condition is closed.

A normalized weak obstruction $\gamma$ is called \emph{support-reduced} if, after the weak support-role data have been fixed, it is chosen as a length minimizer in $\mathcal A(K,\mathcal R)$. This is a representative choice for the finite support skeleton, not an assertion that every point of the corresponding subarc is ordered by one coordinate.

\begin{lemma}[Closed weak-order selection]\label{lem:closed-order-selection}
Let $K$ be a compact convex hull obtained as the Hausdorff limit of convex hulls of simple polygonal arcs $\gamma_n$ realizing one stabilized finite weak support-role assignment $\mathcal R$ in the witness vocabulary fixed above, including the selected witness delimiter contacts of the named subarcs. Then $\mathcal A(K,\mathcal R)$ is compact in the product topology of uniform convergence of curves and convergence of witness parameters, and it contains a length-minimizing representative. For any such representative selected through the closed witness-contact convention, the named support-face and delimiter contacts whose directions are part of the stabilized support-role assignment $\mathcal R$ occur, on each exposed boundary chain, in weak normal-fan order. Branch-local wedge and escape witnesses are not asserted to belong to this global order; they satisfy the closed affine incidence, guard, and escape rows recorded for their branch. Coincident contacts and exposed support segments are allowed and are interpreted as weak equalities among selected witness contacts.
\end{lemma}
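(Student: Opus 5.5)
The statement has three parts: compactness of $\mathcal A(K,\mathcal R)$, existence of a length minimizer, and weak normal-fan order of the stabilized support-face and delimiter contacts. I would prove them in that order.

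\emph{Compactness.} Every curve in $\mathcal A(K,\mathcal R)$ is a uniform limit of constant-speed parametrizations on $[0,1]$ of arcs of length at most one. Such limits are $1$-Lipschitz. Since $\conv(\eta)=K$, they also take values in the compact set $K$. Arzel\`a--Ascoli then shows that the curve component lies in a compact subset of $C([0,1],\R^2)$, and the witness component lies in the compact product $I_1\times\cdots\times I_N$. It remains to show that $\mathcal A(K,\mathcal R)$ is closed, and I would check each defining condition in turn.
\begin{enumerate}
\item Being a uniform limit of constant-speed simple polygonal arcs realizing $\mathcal R$, with witness parameters converging to the $t_j$, is closed by a diagonal argument: for each element of a convergent sequence in $\mathcal A(K,\mathcal R)$, pick an approximant that is close both in the uniform metric and in the witness parameters.
\item The condition $\conv(\eta)=K$ is closed because support functions depend continuously on $\eta$ in the uniform metric.
\item Each role incidence on a pair $(\eta,t_j)$ is closed by Lemma~\ref{lem:closed-contact-order}.
\item Each recorded order relation $t_i\le t_j$ is a closed scalar inequality.
\end{enumerate}

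\emph{Minimizer.} The set $\mathcal A(K,\mathcal R)$ is nonempty, since $\mathcal R$ was stabilized from the approximating sequence $\gamma_n$ and Lemma~\ref{lem:closed-contact-order} supplies a limit point. Arclength is lower semicontinuous under uniform convergence. A lower semicontinuous function on a nonempty compact set attains its minimum, so a minimizing element $(\eta;t_1,\ldots,t_N)$ exists. Its length is at most one.

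\emph{Weak normal-fan order.} I would transfer the order from the approximants. For the simple polygonal approximants $\gamma_n$, the support-contact order input (Theorem~\ref{thm:imported-support-contact}, proved in Appendix~\ref{app:unimodality}) says that on each exposed boundary chain the selected contacts in the stabilized directions of $\mathcal R$ occur in the cyclic normal-fan order of Table~\ref{tab:cyclic-order}. Because $\mathcal R$ is stabilized, this order is the same finite list of weak inequalities $t_i^{(n)}\le t_j^{(n)}$ for every $n$. These inequalities are part of the recorded data of $\mathcal R$, so every member of $\mathcal A(K,\mathcal R)$, and in particular the minimizer, satisfies them in the limit.

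Wedge and escape witnesses are not stabilized support directions of $\mathcal R$, so no global order is claimed for them. They inherit only their closed affine rows, which are closed conditions and are handled the same way as in the compactness step. When witnesses coincide in the limit, or several collapse onto one exposed support segment, strict orders become equalities; this is exactly what the weak-equality convention allows.

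\emph{Main obstacle.} The hardest step is closedness of the first defining condition (approximability realizing $\mathcal R$). In the diagonal argument one must choose approximants whose witness parameters converge jointly with the curves, not only the curves themselves. I would handle this with a single metric on the product space and pick the $n$th approximant within distance $1/n$ of the $n$th element. A second delicate point is the anchoring of the cyclic order on each chain. I would fix the anchor at the lower $\Lambda$-contacts $P_{-3}$ and $P_3$, which are delimiter witnesses of $\mathcal R$, so that the order is linear rather than cyclic before passing to limits.
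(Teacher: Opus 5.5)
Your proposal is correct and follows essentially the same route as the paper: Arzel\`a--Ascoli plus closedness of each defining condition of $\mathcal A(K,\mathcal R)$ (diagonal approximation, continuity of support functions, Lemma~\ref{lem:closed-contact-order}, closed scalar order relations), then lower semicontinuity of arclength for the minimizer, then transfer to the limit of the weak normal-fan order of the simple polygonal approximants; the paper obtains that order from Theorem~\ref{thm:imported-support-contact} together with Lemma~\ref{lem:cyclic-exposure} and Corollary~\ref{cor:role-trichotomy}, not from unimodality alone. Your diagonal step chooses approximants close in the product metric, so the witness parameters converge jointly with the curves, which is slightly more careful than the paper's version, where only $\|\pi_{m,n(m)}-\eta_m\|_\infty$ is controlled.
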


\begin{proof}
See Appendix~\ref{app:lemmas}. The proof uses Wichiramala's endpoint-contact order for simple polygonal arcs and the closedness of the finite weak witness-role incidences under arclength/Hausdorff limits.
\end{proof}

\begin{lemma}[Reduction to weak support-reduced obstructions]\label{lem:reduced-obstruction}
If $T$ fails to cover some unit arc, then $T$ fails to cover a weak support-reduced normalized obstruction $\gamma$ with $\ell(\gamma)\le 1$ and stabilized weak support-role data for all support, delimiter, wedge, and escape witnesses used in the finite case tree.
\end{lemma}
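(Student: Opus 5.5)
The plan is to chain the three compactness statements already established, carrying one finite list of witness data from the approximating simple polygonal arcs down to a length minimizer.

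\textbf{Step 1: a noncovered weak obstruction with convergent hulls.} Suppose $T$ fails to cover some unit arc. Lemma~\ref{lem:reduction} gives simple polygonal arcs $\gamma_n$ of length at most one. Their hulls $K_n=\conv(\gamma_n)$ converge in the Hausdorff metric to a hull $K$, and $K$ is not contained in any congruent copy of $T$. The noncoverage is closed under this limit for the following reason. If copies $g_nT\supseteq K$ existed along a subsequence, the rigid motions $g_n$ would lie in a compact set, because all $K_n$ lie in a bounded region after translation. A convergent subsequence would then give a limit copy containing $K$. So noncoverage of the approximants forces noncoverage of $K$, which is how Lemma~\ref{lem:reduction} is arranged.

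\textbf{Step 2: normalization and stabilization of the role data.} On each $\gamma_n$ I apply the parallel-support $\Lambda$-property and put $\gamma_n$ into its own $\Lambda$-frame, with $P_{-3}^{(n)}=0$ and $x(P_{-3})\le x(P_3)$. The gap is $H>0$, since a unit segment fits in $T$ because $c>1$. Next I select witnesses for every direction in the fixed finite vocabulary: the special angles, $\Theta_{\rm del}$, $\Theta_{\rm wedge}$ and $\Theta_{\rm esc}$. For each $\gamma_n$, the data consisting of which named closed subarc carries each witness, the delimiter choices, and the weak order relations form one of finitely many combinatorial assignments. By pigeonhole, one assignment $\mathcal R$ occurs infinitely often, and I pass to that subsequence. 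The normalizing rigid motions lie in a compact group up to a finite reflection choice, so a further subsequence makes them converge. The discussion preceding the lemma then shows that the normalized limit hull is congruent to $K$. It therefore remains noncovered and has the same arclength and support-order data. Lemma~\ref{lem:closed-contact-order} passes the selected witnesses and their weak order relations to the limit, so $\mathcal A(K,\mathcal R)$ is nonempty.

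\textbf{Step 3: the minimizer and its order properties.} By Lemma~\ref{lem:closed-order-selection}, $\mathcal A(K,\mathcal R)$ is compact and contains a length minimizer $\gamma$. Its length is at most one by lower semicontinuity of arclength under uniform convergence. Its hull is exactly $K$, so $\gamma$ is still noncovered by $T$. The same lemma gives the weak normal-fan order of the stabilized support and delimiter contacts. The branch-local wedge and escape witnesses satisfy their closed affine rows, since these rows are closed conditions in the product space.

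\textbf{Main obstacle.} The delicate point is the sequencing of the branch-local escape and wedge witnesses. These slots exist only after a canonical placement of $T$ has been fixed. I would handle this by noting two facts. First, the canonical placements are determined by the finitely many stabilized support offsets, which converge. Second, each escape row $d(\overrightarrow{AX},\theta)\ge b$ is a weak inequality, hence a closed condition. So these slots can be added to the product space after stabilization, and compactness is preserved. A further pigeonhole over the finitely many branches ensures that the added slots are also stabilized along the subsequence.
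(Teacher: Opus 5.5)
Your route is the same as the paper's: invoke the weak-hull reduction, put each approximant in its own $\Lambda$-frame with convergent normalizing motions, stabilize the finite role data by pigeonhole, and pass the witnesses to the limit with Lemma~\ref{lem:closed-contact-order}. You then take a length minimizer in $\mathcal A(K,\mathcal R)$ via Lemma~\ref{lem:closed-order-selection}, and noncoverage is inherited because the hull $K$ is unchanged. Your handling of the branch-local escape and wedge slots also matches the paper's sequencing remark.

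The one thing to fix is the justification in Step~1, which runs in the wrong direction. The compactness-of-rigid-motions argument shows that if $g_nT\supseteq K_n$ along a subsequence, then a limit copy contains $K$. That is, it shows that \emph{coverage} passes to Hausdorff limits. So the covered class is closed, noncoverage is an open condition, and noncovered approximating hulls can converge to a covered hull.

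The paper gets noncoverage of the limiting hull from monotonicity under enlargement instead. Every approximating hull contains $\conv(\alpha)$ for the fixed noncovered polygonal arc $\alpha$; in the proof of Lemma~\ref{lem:reduction} the sequence is even constant, $\beta_m=\alpha$. Hence $\conv(\alpha)\subseteq K$, and any congruent copy of $T$ containing $K$ would contain $\alpha$.

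Since you cite Lemma~\ref{lem:reduction} for this conclusion anyway, your proof goes through once that paragraph is replaced by the containment argument. The rest of Step~2 (the normalized limit hull is congruent to the noncovered $K$) and Step~3 are then as in the paper.
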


\begin{proof}
See Appendix~\ref{app:lemmas}. The reduction first passes to a noncovered limiting hull for a noncovered arc of length at most one, and then minimizes length within the resulting hull and fixed weak support-role data.
\end{proof}

\subsection{Support-contact skeleton and ordering of contacts}\label{subsec:monotonicity}

The next proposition records the external support-contact input used in the later case analysis. To avoid mixing angle conventions, we use the following terminology. A \emph{line direction} is the angle $\theta$ in the oriented support line $L_\theta$; its chosen support normal is $n_\theta=(\sin\theta,-\cos\theta)$; an \emph{escape direction} is the vector angle appearing in a row $d(\overrightarrow{AX},\theta_{\rm esc})\ge b$. In Proposition~\ref{prop:support-input} we reserve $\vartheta$ for the cyclic boundary-sweep parameter used to import the support-contact unimodality. For the non-horizontal special contacts in the table below, $\vartheta$ has the same numerical value as the line direction. The two formal endpoint symbols $\vartheta_{\rm start}$ and $\vartheta_{\rm end}$ are not line directions and in particular are not the vertical support contacts $P_{\pm6}$; they denote the two anchor positions of the same upper-boundary sweep at the first and last contacts on the lower horizontal support $L_0$, where the contact function takes the values $t(P_{-3})$ and $t(P_3)$.

\begin{table}[t]
\centering
\caption{Angle convention for the finite support-contact input. The line direction is the angle in $L_\theta$, while $\vartheta$ is the cyclic sweep parameter used in the imported support-contact order.}\label{tab:support-angle-convention}
\scriptsize
\setlength{\tabcolsep}{3pt}
\renewcommand{\arraystretch}{1.08}
\begin{tabularx}{\textwidth}{@{}p{1.15cm}p{4.2cm}p{1.65cm}p{2.75cm}X@{}}
\toprule
\textbf{Role} & \textbf{Selected support face} & \textbf{Line direction} & \textbf{Support normal} & \textbf{Sweep parameter $\vartheta$} \\
\midrule
$P_{-3}$ & first lower $\Lambda$ contact on $L_0$ & $0^\circ$ & $n_{0^\circ}=(0,-1)$ & $\vartheta_{\rm start}$, lower-left anchor of the sweep. \\
$P_{-2}$ & selected inner $L_{-120^\circ}$ contact & $-120^\circ$ & $n_{-120^\circ}$ & $-120^\circ$. \\
$P_{-1}$ & selected inner $L_{-150^\circ}$ contact & $-150^\circ$ & $n_{-150^\circ}$ & $-150^\circ$. \\
$P_0$ & upper $\Lambda$ contact on $L_{180^\circ}$ & $180^\circ$ & $n_{180^\circ}=(0,1)$ & $180^\circ$. \\
$P_1$ & selected $L_{150^\circ}$ contact, inner or right-tail & $150^\circ$ & $n_{150^\circ}$ & $150^\circ$. \\
$P_2$ & selected $L_{120^\circ}$ contact, inner or right-tail & $120^\circ$ & $n_{120^\circ}$ & $120^\circ$. \\
$P_3$ & last lower $\Lambda$ contact on $L_0$ & $0^\circ$ & $n_{0^\circ}=(0,-1)$ & $\vartheta_{\rm end}$, lower-right anchor of the sweep. \\
\bottomrule
\end{tabularx}
\end{table}

The external input from Wichiramala~\cite[Lemmas~1--3]{wichiramala2019a} is the following unimodality statement for the endpoint-contact function. We state it faithfully to the source; a self-contained proof is included in Appendix~\ref{app:unimodality} for completeness, so the present paper does not rely on the unpublished source.

\begin{theorem}[Support-contact unimodality; {\cite[Lemmas~1--3]{wichiramala2019a}}]\label{thm:imported-support-contact}
Let $\gamma:[0,\ell]\to\R^2$ be a simple polygonal arc that is not a line segment. For each direction $\theta$ let
\[
C_\theta=\{t\in[0,\ell]:\gamma(t)\in L_\theta\},
\qquad
T(\theta)=\{\min C_\theta,\ \max C_\theta\}
\]
record the first and last contact parameters of the oriented support line $L_\theta$. Then:
\begin{enumerate}
  \item the graph of $T$ over one period of directions is a step function whose levels are the parameters of the vertices of $\conv(\gamma)$;
  \item for every $s\in[0,\ell]$, the direction set $\{\theta:\gamma(s)\in L_\theta\}$ is empty or a closed arc of angular length less than $\pi$;
  \item there is a direction $\hat\theta$ exposing the hull vertex of smallest parameter such that, over the period $[\hat\theta,\hat\theta+2\pi]$, every selection from $T$ is monotone nondecreasing and then monotone nonincreasing (unimodal).
\end{enumerate}
\end{theorem}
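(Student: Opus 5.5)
The plan is to work with the convex hull $K=\conv(\gamma)$, which is a convex polygon whose vertices are vertices of $\gamma$, and to use the fact that $\gamma$ is simple.

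\emph{Part (1).} For each direction $\theta$, the face $K\cap L_\theta$ is either a hull vertex or a hull edge. Its intersection with $\gamma$ is finite, or contains a whole segment of $\gamma$ lying on the edge. In both cases $\min C_\theta$ and $\max C_\theta$ are parameters of hull vertices: an extreme contact on an edge is attained at an endpoint of a collinear run of $\gamma$, and that endpoint is a vertex of $K$ or a vertex of $\gamma$ lying on the hull. As $\theta$ varies in an open normal cone of a fixed hull vertex $v$, the contact set is $\gamma^{-1}(v)$, which is a single parameter because $\gamma$ is simple. So $T$ is constant on these open arcs, and it jumps only at the finitely many edge normals. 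This makes $T$ a step function.

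\emph{Part (2).} The set $\{\theta:\gamma(s)\in L_\theta\}$ is the normal cone of $K$ at the point $\gamma(s)$. If $\gamma(s)$ lies on $\partial K$, this cone is a closed arc. It has angular length less than $\pi$ exactly when $\gamma(s)$ is not in the relative interior of a degenerate $K$, that is, when $K$ is not a segment. This is where the hypothesis that $\gamma$ is not a line segment enters: a simple arc spanning a segment hull would itself have to be that segment.

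\emph{Part (3)} is the main obstacle. Let $v_0$ be the hull vertex with the smallest parameter, and sweep the directions counterclockwise starting from a $\hat\theta$ in its normal cone. Along this sweep the exposed hull vertices $v_0,v_1,\dots,v_m$ appear in boundary-cyclic order. The claim to prove is that their parameters $t(v_i)$ first increase and then decrease. Argue by contradiction: suppose the parameters show a pattern $t(v_i)>t(v_j)<t(v_k)$ with $i<j<k$, all taken after $v_0$, or some other pattern with two local maxima.

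The tool is a Jordan-curve argument. The subarc $\gamma[t(v_0),t(v_j)]$ together with the hull chord structure splits $K$. Combined with the fact that $v_i$ and $v_k$ lie on opposite boundary sides of $v_j$ relative to $v_0$, this forces $\gamma$ to cross itself when it connects the vertices in parameter order. This is the standard fact that the hull vertices of a simple arc, read in boundary order from the endpoint-nearest vertex, form a unimodal (``bitonic'') sequence of parameters. I expect this to be proved by induction on the number of polygon vertices, or by observing that two subarcs joining interleaved boundary points of a convex region must intersect.

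Finally, handle the edges carefully. On an edge normal, $T(\theta)$ contains both the minimum and the maximum of the contacts along that edge. Because $\gamma$ is simple, the contacts along one edge cannot interleave with the neighboring vertex parameters, so any selection from $T$ inherits the unimodality from the vertex sequence.
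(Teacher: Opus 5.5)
Your proofs of (1), (2) and the vertex-sequence part of (3) follow the paper's route. A non-unimodal sequence of plateau values gives a valley $t(v_i)>t(v_j)<t(v_k)$ after the minimal vertex $v_0$. The subarc $\gamma[t(v_0),t(v_j)]$ and the subarc between $t(v_i)$ and $t(v_k)$ then have disjoint parameter intervals, and their endpoint pairs interleave on $\partial K$ by the sweep order. The crossing lemma then contradicts simplicity. You should name the second subarc explicitly and prove the crossing lemma rather than only expect it; this is not a missing idea. The paper proves it as its interleaving-exclusion lemma: it closes the first subarc with a dilated boundary arc outside $K$ to get a Jordan curve that separates the other two endpoints.

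The step that fails is your last paragraph on edge directions, together with the matching claim in (1) that $\min C_\theta$ and $\max C_\theta$ are always hull-vertex parameters. Here is a counterexample.
\begin{itemize}
\item Let $u=(0,0)$, $w=(2,0)$, $x=(1,1)$, $p=(1,0)$, and let $\gamma$ be the simple polygonal arc $p\to u\to x\to w$.
\item Then $t_u=1$, $t_x=1+\sqrt2$, $t_w=1+2\sqrt2$, and $C_{0^\circ}=[0,1]\cup\{1+2\sqrt2\}$.
\item So $\min C_{0^\circ}=0$ is the parameter of $p$, which is not a vertex of $\conv(\gamma)$.
\item Sweeping counterclockwise from the normal cone of $u$, the vertex of smallest parameter, the plateau values are $1,\,1+2\sqrt2,\,1+\sqrt2,\,1$, which is unimodal.
\item But the selection taking $\min C_{0^\circ}$ at the edge direction reads $1,\,0,\,1+2\sqrt2$, a strict dip.
\end{itemize}
So contacts along an edge can fall outside the range of the neighbouring vertex parameters, and unimodality of literal selections from $\{\min C_\theta,\max C_\theta\}$ is false.

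The paper avoids this with its convention that a selection at an edge direction is a one-sided limit of the adjacent vertex plateaus. Its proof of (3) accordingly evaluates $T$ only at directions interior to vertex plateaus. To fix your proof, adopt that convention, delete the edge paragraph, and restrict the level claim in (1) to the plateau values.
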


Here a \emph{selection} from $T(\theta)=\{\min C_\theta,\max C_\theta\}$ means a choice of one endpoint-contact branch, with directions exposing an edge interpreted by one-sided limits inside the normal cone of that edge; arbitrary switching between the two endpoint values at a single exposed-edge direction is not intended. This endpoint-branch form is the only form of selection consumed by Proposition~\ref{prop:support-input} and by the proofs in Appendix~\ref{app:unimodality}.

We emphasize what Theorem~\ref{thm:imported-support-contact} does \emph{not} assert: no monotonicity is claimed along a proper subinterval of directions, because such a subinterval may contain the unimodal peak, and the set of directions whose contact lies in a prescribed parameter interval may consist of up to two arcs. The order and exclusion statements used below are therefore always derived from the unimodal shape of $T$ over the \emph{full} period, anchored at the lower-support values; see the proofs of Proposition~\ref{prop:support-input} and Lemma~\ref{lem:contact-alternatives} in Appendix~\ref{app:lemmas}. Table~\ref{tab:cyclic-order} in Appendix~\ref{app:unimodality} records the explicit cyclic normal-fan order of the exposed faces consumed by these arguments, in the orientation convention of Section~\ref{subsec:support-distance}, together with the convention for exposed segments and coincident contacts. Lemma~\ref{lem:closed-contact-order} passes the resulting finite witness-contact data to weak limiting obstructions.

\begin{proposition}[Finite support-contact order input]\label{prop:support-input}
Let $\gamma$ be a weak support-reduced normalized obstruction, and let the selected support contacts be the witness contacts carried through the closed limiting convention of Section~\ref{subsec:lambda}. Consider the finite upper-boundary sweep with sweep parameters
\[
\vartheta_{\rm start},\ -120^\circ,\ -150^\circ,\ 180^\circ,\ 150^\circ,\ 120^\circ,\ \vartheta_{\rm end}.
\]
\begin{enumerate}
\item On the positive side, the selected contacts on $L_{150^\circ}$ and $L_{120^\circ}$ either both lie on the middle chain (inner roles $P_1,P_2$), or the $L_{150^\circ}$ contact lies on the middle chain while the $L_{120^\circ}$ contact lies on the right tail (roles $P_1,P_7$), or both lie on the right tail (roles $P_8,P_7$). The mixed configuration in which the $L_{150^\circ}$ contact is strictly outer while the $L_{120^\circ}$ contact is strictly inner cannot occur; coincidence of a selected witness with $P_3$ is a weak-order degeneracy of an adjacent alternative. The reflected statement holds on the negative side.
\item Whenever the roles on a given side are inner, the corresponding selected witnesses occur in weak parameter order; in the branch in which all four $L_{\pm150^\circ}$, $L_{\pm120^\circ}$ roles are inner,
\[
P_{-3}\preceq P_{-2}\preceq P_{-1}\preceq P_0
\preceq P_1\preceq P_2\preceq P_3.
\]
On a side whose roles are outer, the corresponding tail witnesses are not asserted to precede $P_3$ (resp.\ follow $P_{-3}$) in parameter; their admissible weak orders are enumerated exhaustively in Lemmas~\ref{lem:case2-order-reduction} and~\ref{lem:case3-order-reduction}.
\end{enumerate}
\end{proposition}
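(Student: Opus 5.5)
The plan is to prove the statement first for the approximating simple polygonal arcs $\gamma_n$ and then pass to the weak limit. For each $\gamma_n$ we use Theorem~\ref{thm:imported-support-contact}, and the main work is to read the full-period unimodality in terms of the $\Lambda$-anchors. By Theorem~\ref{thm:imported-support-contact}(3), a selection $T$ of the endpoint-contact function is unimodal over a full period starting at the direction $\hat\theta$ that exposes the hull vertex of smallest parameter. We anchor the sweep at the lower horizontal support $L_{0^\circ}$. Its first and last contacts are $t(P_{-3})$ and $t(P_3)$, and every point of the middle chain has parameter in $[t(P_{-3}),t(P_3)]$. The upper contact $P_0$ on $L_{180^\circ}$ has parameter strictly between these two values, by the $\Lambda$-property. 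So along the upper-boundary sweep $\vartheta_{\rm start}\to-120^\circ\to-150^\circ\to180^\circ\to150^\circ\to120^\circ\to\vartheta_{\rm end}$, the contact function starts at $t(P_{-3})$, passes through $t(P_0)$, and ends at $t(P_3)$.

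\textbf{Part (1).} Suppose, for contradiction, that the $L_{150^\circ}$ witness is strictly outer, so $t>t(P_3)$, while the $L_{120^\circ}$ witness is strictly inner, so $t<t(P_3)$. Following the sweep, the contact function goes
\[
t(P_0) \;\to\; t_{150}>t(P_3) \;\to\; t_{120}<t(P_3) \;\to\; t(P_3).
\]
This is a strict rise, then a strict drop, then a rise. The sweep segment lies inside one full period, so the selection would have two local maxima, contradicting unimodality.

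One point needs care: the peak of the full-period selection could lie outside this segment, and the segment might cross the anchor $\hat\theta$. We handle this by noting that the lower support values bracket the middle chain. We then check that the pattern "rise, drop below $t(P_3)$, rise back to $t(P_3)$" cannot occur in any unimodal sequence, whatever the rotation of the period. Here we use Theorem~\ref{thm:imported-support-contact}(2): the directions at which one point is a contact form an arc of length less than $\pi$, so the same point cannot be revisited at separated directions. We also use the cyclic order of Table~\ref{tab:cyclic-order}.

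The three remaining sign patterns are exactly the listed alternatives: (inner, inner), (inner, outer), and (outer, outer). The negative side follows by applying the same argument under the reflection that exchanges $P_{-3}$ with $P_3$ and $\pm\theta$.

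\textbf{Part (2).} If both roles on a side are inner, the values at $180^\circ$, $150^\circ$, $120^\circ$, $\vartheta_{\rm end}$ all lie in $[t(P_0),t(P_3)]$ and sit on the descending or ascending branch between two anchored values. Unimodality together with the anchoring then forces the weak monotone order $P_0\preceq P_1\preceq P_2\preceq P_3$. The left side is symmetric. The concatenated chain in the all-inner case follows because $P_0$ separates the two sides. For outer roles we assert nothing, matching the statement.

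\textbf{Passage to the limit.} For each $n$ we select the witnesses as endpoint contacts, so the selection is of the admissible endpoint-branch type. The weak inequalities hold for every $n$, and Lemma~\ref{lem:closed-contact-order} together with Lemma~\ref{lem:closed-order-selection} carries them to the weak support-reduced limit $\gamma$. Strict outer or inner status may degenerate in the limit into coincidence with $P_3$. That case is exactly the weak-order degeneracy allowed in the statement, so it falls into an adjacent alternative.

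\textbf{Main obstacle.} The hardest step is making the two-maxima contradiction rigorous when the sweep segment is not aligned with the period $[\hat\theta,\hat\theta+2\pi]$, and when exposed edges cause multivalued endpoint contacts. The first approach I would try is to rotate the period so that it starts just after $\vartheta_{\rm end}$. I would then use Theorem~\ref{thm:imported-support-contact}(1) and the step-function structure to compare vertex parameters directly.
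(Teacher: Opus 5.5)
Your route differs from the paper's, and as written it has two genuine gaps.

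The paper never uses unimodality for this proposition. It proves Corollary~\ref{cor:role-trichotomy} by applying the interleaving exclusion, Lemma~\ref{lem:interleaving}, directly to four named boundary points. Their cyclic order comes from Lemmas~\ref{lem:cyclic-exposure} and~\ref{lem:cyclic-exposure-same-face}. For example, the mixed pair is excluded with $I=[t_0,w_{120^\circ}]$, $J=[t_3,w_{150^\circ}]$ and the boundary order $P_3,\gamma(w_{120^\circ}),\gamma(w_{150^\circ}),P_0$. Going through Theorem~\ref{thm:imported-support-contact} is legitimate in principle, since that theorem rests on the same lemma.

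\textbf{First gap: the step you flag as the main obstacle is left open.} The tools you cite (Theorem~\ref{thm:imported-support-contact}(2) and Table~\ref{tab:cyclic-order}) do not close it. A rise--drop--rise pattern on a proper arc of directions is compatible with unimodality over $[\hat\theta,\hat\theta+2\pi]$ whenever the arc contains both the peak and $\hat\theta$ and ends no higher than it starts. You also cannot ``rotate the period'', because it is pinned at $\hat\theta$.

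What closes the argument is the wrap-around comparison $t(P_3)>t(P_0)$. At the cyclically ordered directions $180^\circ,150^\circ,120^\circ,0^\circ$ the values satisfy $t_0<t_{150}>t_{120}<t_3>t_0$, a strict alternation at four directions. No function that is nondecreasing then nonincreasing on some full period has such a pattern. To see this, list the four directions in increasing order inside the period. You get either $f(y_1)<f(y_2)>f(y_3)<f(y_4)$ or $f(y_1)>f(y_2)<f(y_3)>f(y_4)$. In both cases the first strict descent puts every later point past the peak, which contradicts the strict ascent that follows. This works wherever $\hat\theta$ is. The same four-point argument gives $w_{150^\circ}\le w_{120^\circ}$ in Part~(2), but only once both witnesses are known to lie in $[t_0,t_3]$.

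\textbf{Second gap: you never prove that a positive-side witness has parameter at least $t(P_0)$.} Your dichotomy is only $t<t_3$ versus $t>t_3$. A witness on $\gamma_{P_{-3}P_0}$, or even on the left tail, would therefore count as ``inner''. The proposition, however, places $P_1,P_2$ on $\gamma_{P_0P_3}$, and Part~(2) needs $P_0\preceq P_1$. You simply assume this when you say the values lie in $[t(P_0),t(P_3)]$.

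In the paper this is Corollary~\ref{cor:role-trichotomy}(i). It follows from Lemma~\ref{lem:interleaving} with $J=[t_0,t_3]$ and $I=[w_\theta,t_{-3}]$ or $I=[t_{-3},w_\theta]$. The key input is the boundary order $P_{-3},P_3,\gamma(w_\theta),P_0$, in which $P_{-3}$ and $P_3$ are distinct points of the same lower face.

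Unimodality does not give this cheaply. The natural alternating pattern needs both $t(P_{-3})$ and $t(P_3)$ as values at the single direction $0^\circ$. One endpoint-branch selection cannot supply both: the one-sided values at $0^{\pm}$ are parameters of the lower-face vertices, and these need not equal $t(P_{\pm3})$. You would need an extra planarity argument to get a fourth alternating value, and at that point applying Lemma~\ref{lem:interleaving} directly, as the paper does, is cleaner.

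Your passage to the weak limit via Lemma~\ref{lem:closed-contact-order} is fine.
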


\begin{proof}
See Appendix~\ref{app:lemmas}.
\end{proof}

The following lemmas are immediate consequences of Proposition~\ref{prop:support-input} and the branch definitions.

\begin{lemma}[Ordered middle support skeleton]\label{lem:middle-skeleton}
Let $\gamma$ be a weak support-reduced normalized obstruction. Then the selected middle support witnesses---that is, the witnesses among $P_{\pm2},P_{\pm1}$ whose roles are inner in the sense of Proposition~\ref{prop:support-input}, together with $P_{-3},P_0,P_3$---satisfy, allowing coincident contacts or exposed support segments, the weak parameter order
\[
P_{-3}\preceq P_{-2}\preceq P_{-1}\preceq P_0\preceq P_1\preceq P_2\preceq P_3,
\]
restricted to the witnesses that are present in the branch under consideration. Moreover, the corresponding exposed faces on the upper boundary chain of $K=\conv(\gamma)$ occur in the same weak normal-fan order. No assertion is made that every point of the subarc $\gamma_{P_{-3}P_3}$ has monotone $x$-coordinate, and no parameter relation to $P_{\pm3}$ is asserted for tail witnesses.
\end{lemma}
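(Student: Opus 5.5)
The lemma is essentially a restatement of Proposition~\ref{prop:support-input}(2), restricted to the witnesses that are present. My plan is to read it off from that proposition, with one additional step for the face-order assertion.

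\textbf{Step 1: reduce to the proposition.} Fix a weak support-reduced normalized obstruction $\gamma$ and a branch. By Proposition~\ref{prop:support-input}(1), on each side the $L_{\pm150^\circ}$ and $L_{\pm120^\circ}$ roles are inner--inner, inner--outer, or outer--outer; the mixed pattern with the $150^\circ$ role outer and the $120^\circ$ role inner is excluded. So the set of present inner witnesses on the positive side is one of $\{P_1,P_2\}$, $\{P_1\}$, or $\emptyset$, and reflectively on the negative side. In every pattern, whenever $P_2$ is inner so is $P_1$, and whenever $P_{-2}$ is inner so is $P_{-1}$.

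\textbf{Step 2: the parameter order, side by side.} Proposition~\ref{prop:support-input}(2) already gives the weak order of inner witnesses on each side relative to the anchors: $P_0\preceq P_1\preceq P_2\preceq P_3$ on the positive side, and $P_{-3}\preceq P_{-2}\preceq P_{-1}\preceq P_0$ on the negative side. When only $P_1$ is inner, I take the restriction $P_0\preceq P_1\preceq P_3$. This follows because the endpoint selection of Theorem~\ref{thm:imported-support-contact}, swept over the full period anchored at the lower-support values $t(P_{-3})$ and $t(P_3)$, passes through $180^\circ$, $150^\circ$, $120^\circ$ monotonically on the decreasing branch after the peak. Concatenating the two sides at the common witness $P_0$ gives the full chain, restricted to whichever witnesses are present. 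Tail witnesses are excluded by definition, so no relation to $P_{\pm3}$ is claimed for them.

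\textbf{Step 3: the face order.} The order of the exposed faces on the upper boundary chain of $K=\conv(\gamma)$ comes from Lemma~\ref{lem:closed-order-selection}. That lemma states that, for a representative chosen through the closed witness convention, the named support-face contacts in directions belonging to $\mathcal R$ occur on each exposed boundary chain in weak normal-fan order. The directions $0^\circ$, $\mp120^\circ$, $\mp150^\circ$, $180^\circ$ are all part of the stabilized support-role assignment, and the fan order of these directions is exactly the sweep order listed in Proposition~\ref{prop:support-input}, as tabulated in Table~\ref{tab:cyclic-order}. Coincidences and exposed segments become weak equalities under the witness convention. The disclaimer about monotone $x$-coordinate needs no proof; it only records what is not being asserted.

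\textbf{Main obstacle.} The only delicate point is the limiting passage. The parameter order and the face order both hold for the approximating simple polygonal arcs, and they must survive to the weak limit. For this I rely on Lemma~\ref{lem:closed-contact-order}: the finitely many weak relations $t_i^{(n)}\le t_j^{(n)}$, together with the support-face incidences, are closed conditions, so after passing to a subsequence they persist for the limiting witnesses. This is also where strict orders may degenerate into the allowed weak equalities.
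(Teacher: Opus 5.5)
Your proposal is correct and follows essentially the paper's proof: the parameter chain is read off from Proposition~\ref{prop:support-input}(2), restricted to the witnesses present in the branch. The face order is the normal-fan ordering of the exposed faces of $K$; the paper takes this directly from convexity, whereas you route it through Lemma~\ref{lem:closed-order-selection}.

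Two parts of your write-up can be dropped. The unimodality justification in Step~2 is not needed, because an inner $P_1$ lies on $\gamma_{P_0P_3}$ by definition, so $P_0\preceq P_1\preceq P_3$ is immediate. Your ``decreasing branch'' claim would also require locating the peak of $T$, which the paper deliberately avoids by arguing through interleaving exclusion. The separate limiting passage is likewise unnecessary, since Proposition~\ref{prop:support-input} is already stated for the weak obstruction.
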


\begin{proof}
See Appendix~\ref{app:lemmas}.
\end{proof}

\begin{lemma}[Order of special contacts]\label{lem:order}
Along $\gamma_{P_{-3}P_3}$ the contact points among
\[
P_{-3}, P_{-2}, P_{-1}, P_0, P_1, P_2, P_3
\]
that are present in the branch under consideration occur in this order, with no interleaving from contacts on the tails.
\end{lemma}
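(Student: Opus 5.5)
The plan is to read Lemma~\ref{lem:order} as a direct corollary of Lemma~\ref{lem:middle-skeleton} and Proposition~\ref{prop:support-input}. Two separate claims have to be checked: that the present witnesses occur in the displayed order, and that no tail contact lies between them.

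\textbf{The order.} Lemma~\ref{lem:middle-skeleton} already supplies the weak parameter order
\[
P_{-3}\preceq P_{-2}\preceq P_{-1}\preceq P_0\preceq P_1\preceq P_2\preceq P_3
\]
for the selected inner witnesses present in the branch, together with $P_{-3},P_0,P_3$. So the only task is to identify which labels can appear. By item~1 of Proposition~\ref{prop:support-input}, on the positive side the $L_{150^\circ}$ and $L_{120^\circ}$ witnesses either both receive the inner roles $P_1,P_2$, or only $P_1$ is inner (with $P_7$ outer), or both are outer (roles $P_8,P_7$). The reflected trichotomy holds on the negative side. In every alternative, any label among $P_{\pm1},P_{\pm2}$ that is present is by definition an inner witness, and therefore lies in the parameter interval $[t(P_{-3}),t(P_3)]$. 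Item~2 of Proposition~\ref{prop:support-input} and Lemma~\ref{lem:middle-skeleton} then give the displayed order for exactly these labels.

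\textbf{No interleaving.} Tail witnesses $P_{\pm4},\dots,P_{\pm8}$ lie on $\gamma_{PP_{-3}}$ or on $\gamma_{P_3Q}$, which are the parameter intervals $[0,t(P_{-3})]$ and $[t(P_3),\ell]$. A tail point can therefore sit between two of the listed middle contacts only if it coincides in parameter with $P_{-3}$ or $P_3$. Proposition~\ref{prop:support-input} classifies exactly this situation as a weak-order degeneracy of an adjacent alternative. So no tail contact lies strictly inside $\gamma_{P_{-3}P_3}$, and none interleaves with the middle list.

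\textbf{Expected obstacle.} The delicate point is the degenerate case in which an outer-role witness, for instance $P_7$, coincides with $P_3$, or in which several witnesses share one exposed support segment. Here I would use the closed witness convention of Section~\ref{subsec:lambda}. Such a witness is relabeled as the adjacent alternative, or its recorded order is kept only as a weak equality. With that convention the statement holds in the weak sense in which the paper uses it throughout.
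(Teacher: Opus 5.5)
Your ordering half matches the paper's proof: the weak order of the present middle witnesses comes from Lemma~\ref{lem:middle-skeleton}, and Proposition~\ref{prop:support-input}(1) decides which of $P_{\pm1},P_{\pm2}$ occur. The gap is in the no-interleaving half.

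You take $P_{\pm4},\dots,P_{\pm8}$ to lie on $\gamma_{PP_{-3}}$ or $\gamma_{P_3Q}$ ``by definition'', which turns the claim into a tautology. These labels, however, name support contacts in fixed directions. The paper identifies $P_4,P_5$ with the $L_{30^\circ},L_{60^\circ}$ contacts (e.g.\ \texttt{onlineV} puts $P_4$ on the $30^\circ$ side through $V$), and $P_{\pm6}$ with the $L_{\pm90^\circ}$ contacts. Whether such a contact lies on a tail is a geometric fact, and the later chains ($P_3\preceq P_4$, $P_3\preceq P_6$, \dots) depend on it. Proposition~\ref{prop:support-input} does not provide it: it classifies only the $\pm120^\circ,\pm150^\circ$ witnesses, and its degeneracy clause concerns only those.

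This is exactly the step the paper's proof adds. The $120^\circ,150^\circ$ contacts are $P_2,P_1$ or $P_7,P_8$ by Lemma~\ref{lem:contact-alternatives}. The $30^\circ,60^\circ$ directions expose $\partial\conv(\gamma)$ after the lower face in the normal-fan sweep, so their contacts are tail roles up to the degeneracy at $P_3$.

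When you supply this step, note that normal-fan position gives boundary order, not parameter order. Converting one into the other needs Lemma~\ref{lem:interleaving}, as in Corollary~\ref{cor:role-trichotomy}; the paper's one-line sweep sentence needs this supplement too. It works when the $150^\circ$ witness $P_8$ is outer (Cases~2--3, where $P_{\pm4},P_{\pm5}$ are used). Suppose a $30^\circ$ or $60^\circ$ contact $X$ had $t(P_0)<t(X)<t(P_3)$. Then $\{P_0,X\}$ and $\{P_3,P_8\}$ would interleave on $\partial\conv(\gamma)$, with boundary order $P_3,X,P_8,P_0$, which is impossible. The case $t(X)<t(P_0)$ is excluded exactly as in Corollary~\ref{cor:role-trichotomy}(i).

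Without an outer witness there is no such argument, and support geometry alone cannot succeed. Take the simple polygonal arc with vertices $(0,0),(0.1,0.2),(0.45,0.18),(0.5,0.01),(0.3,0),(0.25,0.02)$. Here $P_{-3}=(0,0)$, $P_0=(0.1,0.2)$, $P_3=(0.3,0)$, and all four $\pm120^\circ,\pm150^\circ$ contacts are (weakly) inner, with $P_1=P_2=(0.45,0.18)$. Yet the $L_{30^\circ}$, $L_{60^\circ}$ and $L_{90^\circ}$ contacts are all the single point $(0.5,0.01)$, strictly between $P_2$ and $P_3$ in parameter. So the tail location of the remaining special contacts depends on the branch and must be proved; it cannot be assumed through the labels.
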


\begin{proof}
See Appendix~\ref{app:lemmas}. The argument combines Lemma~\ref{lem:middle-skeleton} with the angle-support form of the $\Lambda$-property and the support-witness convention above.
\end{proof}

\begin{lemma}[Support-contact alternatives]\label{lem:contact-alternatives}
Let $\gamma$ be a weak support-reduced normalized obstruction, with contacts selected as support witnesses as above.
On the positive side, the active $L_{150^\circ}$ contact has role $P_1$ on the middle subarc $\gamma_{P_0P_3}$ or role $P_8$ on the right tail $\gamma_{P_3Q}$, and the active $L_{120^\circ}$ contact has role $P_2$ on $\gamma_{P_0P_3}$ or role $P_7$ on $\gamma_{P_3Q}$.
Moreover, the mixed pair $(P_8,P_2)$ cannot occur: if $L_{120^\circ}$ has the inner role $P_2$, then $L_{150^\circ}$ has the inner role $P_1$.

By reflection, $L_{-150^\circ}$ has role $P_{-1}$ on $\gamma_{P_{-3}P_0}$ or role $P_{-8}$ on the left tail $\gamma_{PP_{-3}}$, and $L_{-120^\circ}$ has role $P_{-2}$ on $\gamma_{P_{-3}P_0}$ or role $P_{-7}$ on $\gamma_{PP_{-3}}$.
The reflected mixed pair $(P_{-8},P_{-2})$ cannot occur.
\end{lemma}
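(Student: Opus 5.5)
The lemma is essentially a repackaging of Proposition~\ref{prop:support-input}(1). I would prove it in two steps: first locate each active contact, then exclude the mixed pair.

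\emph{Step 1: locating the contacts.} By the $\Lambda$-decomposition of Section~\ref{subsec:lambda}, the arc splits into the left tail $\gamma_{PP_{-3}}$, the middle subarc $\gamma_{P_{-3}P_3}$ and the right tail $\gamma_{P_3Q}$. Every selected $L_{150^\circ}$ or $L_{120^\circ}$ witness therefore has a parameter lying in one of these closed intervals. A support line in direction $150^\circ$ or $120^\circ$ has normal $n_\theta$ with positive $x$-component (Table~\ref{tab:angle-vectors}). In the weak normal-fan order of Table~\ref{tab:cyclic-order}, these faces lie on the positive side of the upper chain, between $L_{180^\circ}$ (exposing $P_0$) and the lower-right anchor $\vartheta_{\rm end}$ (exposing $P_3$).

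I would use the unimodality of Theorem~\ref{thm:imported-support-contact}(3), anchored at the lower-support values $t(P_{-3})$ and $t(P_3)$. The endpoint-contact selection, read over the sweep $\vartheta_{\rm start},-120^\circ,-150^\circ,180^\circ,150^\circ,120^\circ,\vartheta_{\rm end}$, rises to its peak and then falls. A contact for $150^\circ$ or $120^\circ$ with parameter strictly before $t(P_0)$ would then have to sit on the rising branch below the negative-side values. This contradicts the placement of these directions after $180^\circ$ in the sweep. Hence the contact lies either on $\gamma_{P_0P_3}$, giving the roles $P_1,P_2$, or beyond $P_3$ on $\gamma_{P_3Q}$, giving the roles $P_8,P_7$. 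A witness coinciding with $P_0$ or $P_3$ is treated as a weak-order degeneracy and assigned to an adjacent alternative.

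\emph{Step 2: excluding the mixed pair.} This is exactly Proposition~\ref{prop:support-input}(1). Suppose the $L_{120^\circ}$ witness is inner, with parameter at most $t(P_3)$, while the $L_{150^\circ}$ witness is strictly outer, with parameter greater than $t(P_3)$. Along the sweep $180^\circ\to150^\circ\to120^\circ\to\vartheta_{\rm end}$, the selection would then go from $t(P_0)\le t(P_3)$, up to a value above $t(P_3)$, down to a value at most $t(P_3)$, and back to $t(P_3)$. This rise, fall and second rise would create a second local maximum. The peak of the full-period unimodal function must be at $150^\circ$ here, because the values at $120^\circ$ and $\vartheta_{\rm end}$ are both $\le t(P_3)$ and, being after the peak, must be nonincreasing. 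But then the value at $\vartheta_{\rm end}$, namely $t(P_3)$, is at most the value at $120^\circ$, which is at most $t(P_3)$. So equality holds throughout, which again is a weak degeneracy. The strict mixed case is therefore impossible, and inner $P_2$ forces inner $P_1$.

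\emph{Reflection and main obstacle.} The negative side follows by applying everything to the reflection in a vertical line, reversing the parametrization so that the roles $P_{-k}$ correspond to $P_k$. The main obstacle is making Step 1 rigorous at the weak limit. Unimodality is available only for the approximating simple polygonal arcs, and the selection must follow a single endpoint branch. I would therefore carry out Steps 1 and 2 on the approximants $\gamma_n$ and then pass the weak order relations to the limit using Lemma~\ref{lem:closed-contact-order}. Strict outer/inner distinctions may collapse in that limit, and such cases are exactly the permitted weak-order degeneracies.
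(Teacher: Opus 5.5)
Your opening observation is correct: the lemma is Proposition~\ref{prop:support-input}(1) transported to the weak limit by Lemma~\ref{lem:closed-contact-order}. If you simply cite that proposition, you have the paper's proof. The gap is in the self-contained justification you offer for Steps~1 and~2.

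Both steps rest on the claim that the endpoint-contact selection, read along $\vartheta_{\rm start},-120^\circ,-150^\circ,180^\circ,150^\circ,120^\circ,\vartheta_{\rm end}$, rises to a single peak and then falls. That sweep is a full period cut at the lower support, with values $t(P_{-3})$ and $t(P_3)$ at its two ends. Theorem~\ref{thm:imported-support-contact}(3) does not give this. Its unimodal period is cut at a direction $\hat\theta$ exposing the hull vertex of \emph{smallest parameter}, and that vertex is generally not $P_{-3}$.

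In fact your sweep fails to be unimodal whenever the left tail carries a hull vertex. Suppose the $-120^\circ$ contact is an outer left-tail vertex $P_{-7}$, as in Case~1.3, Subcase~2.2 or Case~3. Then the sweep contains $t(P_{-3}),\,t(P_{-7}),\,t(P_0)$ in this order with $t(P_{-7})<t(P_{-3})<t(P_0)$, which is an interior valley. A cyclically unimodal function read from a cut other than its minimum is in general down--up--down. So a rise--fall--rise pattern on part of your sweep is contradictory only if that part is known to avoid the true cut. Hence ``would have to sit on the rising branch'' in Step~1 and ``the peak of the full-period unimodal function must be at $150^\circ$'' in Step~2 are unsupported as written. (The one-line heuristic in Section~\ref{subsec:lambda} is phrased the same way, but it is not what the paper's proof uses.)

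Your argument only ever uses the right half of the sweep. It would therefore go through if you knew that the arc of directions from $0^\circ$ through $120^\circ,150^\circ$ to $180^\circ$ lies inside the true window, i.e.\ that $\hat\theta$ is not strictly between $0^\circ$ and $180^\circ$. The natural way to prove that is to show no right-side hull vertex has parameter below $t(P_0)$. That is an interleaving statement, not a consequence of unimodality.

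The paper accordingly bypasses unimodality here. Corollary~\ref{cor:role-trichotomy} applies the Jordan-curve Lemma~\ref{lem:interleaving} directly to four named contacts:
\begin{itemize}
\item for Step~1, the interval $[w_\theta,t(P_{-3})]$ or $[t(P_{-3}),w_\theta]$ against $[t(P_0),t(P_3)]$;
\item for the mixed pair, $[t(P_0),w_{120^\circ}]$ against $[t(P_3),w_{150^\circ}]$.
\end{itemize}
It uses only the normal-fan boundary order $P_{-3},P_3,\gamma(w_{120^\circ}),\gamma(w_{150^\circ}),P_0$ of Table~\ref{tab:cyclic-order}, so it never needs to know where the unimodal period is anchored. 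Your treatment of the reflection and of the passage to the weak limit is fine.
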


\begin{proof}
See Appendix~\ref{app:lemmas}.
\end{proof}

Figure~\ref{fig:monotone} illustrates the ordered support-contact skeleton used in the finite case analysis.

\begin{figure}[t]
\centering
\includegraphics[width=0.85\columnwidth]{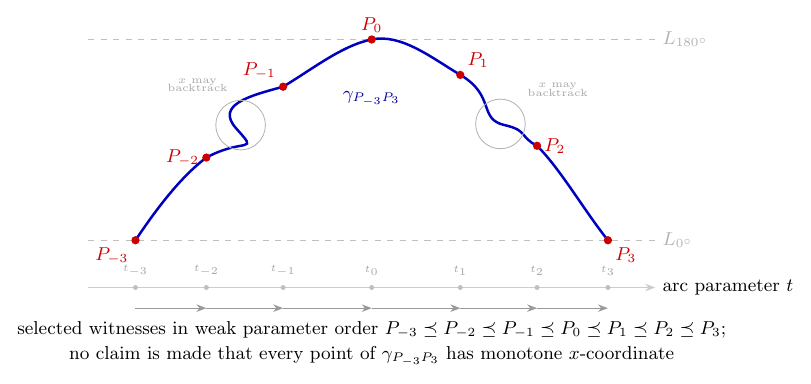}
\caption{Ordered middle support-contact skeleton: the selected support contacts appear in weak normal-fan and parameter order $P_{-3}, P_{-2}, P_{-1}, P_0, P_1, P_2, P_3$. The statement concerns these named support contacts, not every point of the intervening subarc: as the two magnified wiggles show, the $x$-coordinate of $\gamma$ may backtrack between consecutive contacts.}
\label{fig:monotone}
\end{figure}

\subsection{Wedge-shortening principle}

The following triangle-inequality lemma is used only for the Subcase~1.1 $T_W$ tail alternative. It is not used to route Case~2 or Case~3 raw tail orders into representative orders.

\begin{lemma}[Wedge-shortening]\label{lem:wedge}
Let $W$ be a closed planar wedge with apex $V$ and boundary rays $\ell_1$ and $\ell_2$. Fix points $X\in \ell_1$ and $Z\in \ell_2$, and for $Y\in \partial W=\ell_1\cup\ell_2$ set
\[
\phi(Y)=|XY|+|YZ|.
\]
Then
\[
|XY|+|YZ|\ge |XZ|.
\]
Equality can occur only in the usual collinear case $Y\in[X,Z]$. Consequently, whenever a closed branch model permits the degenerate contact obtained by deleting the boundary kink $X$--$Y$--$Z$, replacing that kink by the segment $X$--$Z$ does not increase the polygonal length.
\end{lemma}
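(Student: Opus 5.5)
The statement to prove is the wedge-shortening lemma (Lemma~\ref{lem:wedge}). The plan is to derive it directly from the Euclidean triangle inequality. The wedge structure is not needed for the inequality itself; it only matters for interpreting the final ``consequently'' clause.

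\emph{Step 1: the inequality.} Fix $X\in\ell_1$, $Z\in\ell_2$ and any $Y\in\partial W$. Apply the triangle inequality to the three points $X,Y,Z$:
\[
|XZ| = |(Y-X)+(Z-Y)| \le |Y-X|+|Z-Y| = \phi(Y).
\]
This holds for every $Y\in\R^2$, so in particular for $Y$ on either boundary ray.

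\emph{Step 2: the equality case.} Equality in the triangle inequality for Euclidean norms holds exactly when $Y-X$ and $Z-Y$ are nonnegative multiples of a common vector. That is precisely the condition $Y\in[X,Z]$. Strict convexity of the Euclidean norm gives this, so any $Y$ off the segment gives strict inequality. If one wants to specialize to the wedge, note the following. When $X\ne V$, $Z\ne V$ and the wedge is nondegenerate (opening angle in $(0,\pi)$), the segment $[X,Z]$ meets $\partial W$ only at $X$ and $Z$. So for $Y\in\partial W$, equality can occur only when $Y=X$ or $Y=Z$. In degenerate configurations (apex among $X,Z$, or a straight wedge) the equality set is still $\partial W\cap[X,Z]$. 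This matches the stated ``usual collinear case.''

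\emph{Step 3: the consequence.} Suppose a polygonal chain contains consecutive vertices $X,Y,Z$ with $Y$ a boundary kink. Replacing the two edges $XY,YZ$ by the single edge $XZ$ changes the total length by $|XZ|-\phi(Y)\le 0$, by Step~1; all other edges are untouched. Whenever the closed branch model admits the contact configuration with the kink deleted (the weak-order degeneracy in which $Y$ is not separately required), the shortened chain is feasible in that model and its length is no larger. Hence the infimum of the polygonal length over the model is unchanged by allowing this deletion.

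I expect no real obstacle. The only point needing care is the phrasing of the final clause, namely that the deletion is applied only when the model's closed rows permit the degenerate contact. That is a feasibility hypothesis built into the statement, not something this lemma must prove.
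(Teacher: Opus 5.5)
Your proposal is correct and follows essentially the same route as the paper: the triangle inequality gives $\phi(Y)\ge|XZ|$ with equality exactly when $Y\in[X,Z]$, and the kink-deletion consequence is this length comparison applied under the model's feasibility hypothesis. Your extra check that $[X,Z]\cap\partial W=\{X,Z\}$ for a nondegenerate wedge with $X,Z\neq V$ sharpens the paper's remark that the boundary minimum is attained at the degenerate contacts $Y=X$ and $Y=Z$.
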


\begin{proof}
See Appendix~\ref{app:lemmas}. No assertion is made about endpoint minimization on an arbitrary constrained subinterval of a wedge side.
\end{proof}

Figure~\ref{fig:wedge} illustrates the wedge-shortening principle.

\begin{figure}[t]
\centering
\includegraphics[width=0.6\columnwidth]{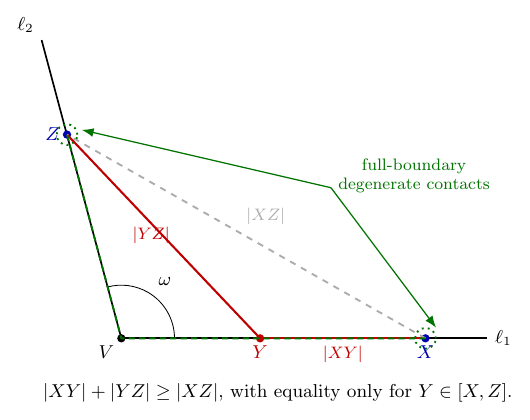}
\caption{Wedge-shortening on the boundary of a wedge: a kink through a boundary point satisfies $|XY|+|YZ|\ge |XZ|$. The proof uses this only when the closed branch model already includes the degenerate contact obtained by deleting the kink.}
\label{fig:wedge}
\end{figure}

\begin{lemma}[Subcase 1.1 tail reduction]\label{lem:subcase11-tail-reduction}
In Subcase~1.1, the additional $T_W$ contacts are represented by the chains used in Subcases~1.1a, 1.1b, the reflection of 1.1b, and the direct both-late model 1.1c. On a $T_W$ boundary ray parameterized as $Z(s)=W+s v$, the relevant closed escape row has the form $s\ge b$. A later boundary kink may therefore be deleted to the nearest vertical delimiter only in the branch where that delimiter itself satisfies the same closed escape row. If exactly one delimiter does not satisfy the row, the corresponding late contact is retained and is represented by the one-sided model 1.1b or its reflection. If both sides are late, both late contacts are retained and certified directly by model 1.1c.
\end{lemma}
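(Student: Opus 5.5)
Our plan is a case split on which vertical delimiters $P_{\pm6}$ satisfy the closed escape row of the $T_W$ boundary ray. Within each case we apply Lemma~\ref{lem:wedge} only where deleting a kink is permitted. First we parameterize each steep side of the canonical placement $T_W$ as $Z(s)=W+sv$, with $W$ the relevant vertex and $v$ the unit vector along the side (direction among $\Theta_{\rm wedge}$). For a point $X$ on this ray, the escape predicate $d(\overrightarrow{AX},\theta)\ge b$ with $\theta\in\Theta_{\rm esc}$ and fixed anchor $A$ is affine in $X$. Restricting it to the ray gives $\langle W-A,e_\theta\rangle+s\langle v,e_\theta\rangle\ge b$. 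After rescaling by the sign of $\langle v,e_\theta\rangle$, which is nonzero because the wedge and escape directions are drawn from finite disjoint vocabularies fixed in Section~\ref{subsec:lambda}, this is a closed half-line condition $s\ge b'$. We record this normalization once per side, so the escape row on the ray is $s\ge b$ as claimed.

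Next consider a late side-incidence contact $Y$ on the left tail, with a kink $X$--$Y$--$Z$ in which $X$ and $Z$ are the neighbouring recorded contacts and the vertical delimiter $P_{-6}$ is the nearest one. There are two situations. If $P_{-6}$ itself satisfies $s\ge b$, the closed model already allows the degenerate configuration in which the incidence witness coincides with $P_{-6}$; this is a weak-order degeneracy under the closed witness convention. Lemma~\ref{lem:wedge} then shows that replacing the kink by the segment to the delimiter does not increase polygonal length, so the branch lies inside the chain of Subcase~1.1a, which carries the middle incidences $P_{\pm2.5}$ and tail escapes $P_{\pm3.5}$. If instead $P_{-6}$ violates the row, deleting the kink would leave the feasible set. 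In that case we keep the late contact $P_{-6.5}$ before $P_{-6}$, which is exactly the chain of model 1.1b. The right side works the same way, giving the reflection of 1.1b with $P_{6.5}$. If both delimiters fail, both late contacts are retained with order $P_{-6.5},P_{-6},P_{-3}$ and $P_3,P_6,P_{6.5}$, which is model 1.1c. Since the condition on each side is a dichotomy (satisfied or violated), these four combinations are exhaustive.

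We expect the main obstacle to be justifying the first situation. We must check that the degenerate contact obtained after deletion still satisfies every other closed row of the branch (ordering, guard, and incidence rows), so that the shortened chain really lies in the 1.1a feasible set and not merely has no greater length. We would handle this by noting that the deletion moves only the one witness $Y$ onto the delimiter $P_{-6}$. The delimiter already satisfies its own support and order rows, and the escape row is now satisfied by hypothesis. All remaining rows involve other witnesses and are therefore unchanged. Lemma~\ref{lem:closed-contact-order} guarantees that this coincidence is admissible as a weak equality. Finally, we note that no monotonicity of endpoints along a constrained subinterval is used, in line with the caveat in Lemma~\ref{lem:wedge}.
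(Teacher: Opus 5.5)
Your routing matches the paper's proof. Per side, you delete the kink via Lemma~\ref{lem:wedge} with the coincidence $P_{\mp3.5}=P_{\mp6}$ when the delimiter satisfies the row, which lands in model 1.1a. Otherwise you retain the late contact, which gives 1.1b, its reflection, or 1.1c. Two steps, however, do not work as written.

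\textbf{The derivation of $s\ge b$.} Your normalization paragraph is incorrect in three ways.
\begin{itemize}
\item $\Theta_{\rm wedge}$ and $\Theta_{\rm esc}$ are not disjoint: $-105^\circ$ lies in both. Disjointness would not give $\langle v,e_\theta\rangle\ne0$ anyway, since that is a non-orthogonality condition.
\item Dividing by a negative coefficient reverses the inequality and gives an upper bound $s\le b'$. For instance, $v=e_{-75^\circ}$ and $\theta=180^\circ\in\Theta_{\rm esc}$ give coefficient $-\cos75^\circ$.
\item Even with a positive coefficient you get $s\ge b'$ with $b'=(b-\langle W-A,e_\theta\rangle)/\langle v,e_\theta\rangle$. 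This equals $b$ only when $A=W$ and $e_\theta=v$, so ``the escape row on the ray is $s\ge b$ as claimed'' does not follow.
\end{itemize}
Also, the ray directions are $-75^\circ$ and $-105^\circ$, the $b$- and $c$-side directions of $T_W$. They are not elements of $\Theta_{\rm wedge}$ in general: $-75^\circ\notin\Theta_{\rm wedge}$, whose entries are support labels. The paper avoids all of this with a one-line specialization. The relevant row is the $T_W$ escape row anchored at $W$ itself, with $\theta_{\rm short}$ equal to the ray direction. So on $Z_+(s)=W+se_{-75^\circ}$ one has $d(\overrightarrow{WZ_+(s)},-75^\circ)=s$, and on $Z_-(s)=W+se_{-105^\circ}$ one has $d(\overrightarrow{WZ_-(s)},-105^\circ)=s$. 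This exact form is also what identifies the preserved row with the 1.1a rows $d(\overrightarrow{WP_{3.5}},-75^\circ)\ge b$ and $d(\overrightarrow{WP_{-3.5}},-105^\circ)\ge b$.

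\textbf{The exhaustion is incomplete.} You split each side only by whether the delimiter satisfies the row, and you do so only for a late contact. The paper first splits by whether the $L_{15^\circ}$ escape contact is early or late:
\begin{itemize}
\item early means $P_{3.5}\in\gamma_{P_3P_6}$, or $P_{-3.5}\in\gamma_{P_{-6}P_{-3}}$ on the left;
\item late means $P_{\pm6.5}$ lies beyond the delimiter.
\end{itemize}
The delimiter test is applied only in the late case. The early alternative goes to 1.1a directly, whatever the delimiter does. Your two-way split omits it. Read literally, it would send ``early contact, delimiter violating the row'' to 1.1b, which requires a late contact that does not exist there. So your claim that ``these four combinations are exhaustive'' needs the three-way split per side: early; late with delimiter satisfying the row; late with delimiter violating it.

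Relatedly, in 1.1a the witness that coincides with the delimiter is the escape witness $P_{\mp3.5}$, which carries only the escape row. It is not an incidence witness. The late incidence row \texttt{onlineWl[p[-6.5]]} is simply dropped, and the incidence rows of 1.1a sit on the middle contacts $P_{\pm2.5}$. Your obstacle paragraph should be phrased in those terms.
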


\begin{proof}
See Appendix~\ref{app:lemmas}. A worked model-row illustration of the degenerate delimiter choice in the closed model 1.1a, including the allowed coincidences $P_{3.5}=P_6$ and $P_{-3.5}=P_{-6}$, follows Table~\ref{tab:wedge-audit} in Appendix~\ref{app:branch-grammar}.
\end{proof}

\begin{lemma}[Case 2 raw-order enumeration]\label{lem:case2-order-reduction}
In Case~2, every admissible right-tail order of $P_4,P_7,P_8$ is one of the six weak orders
\[
P_3P_4P_7P_8,\quad
P_3P_8P_7P_4,\quad
P_3P_4P_8P_7,\quad
P_3P_7P_4P_8,\quad
P_3P_7P_8P_4,\quad
P_3P_8P_4P_7.
\]
In Subcase~2.2, the left tail has one of the two weak orders
\[
P_{-7}P_{-6}P_{-3}
\qquad\text{or}\qquad
P_{-6}P_{-7}P_{-3}.
\]
The finite SOCP certificate treats the two representative right-tail orders in the original terminal models and certifies the remaining raw-order branches separately; no endpoint replacement is needed for Cases~2.1--2.2.
\end{lemma}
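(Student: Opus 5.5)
The plan is to split the statement into its two parts, the right-tail enumeration and the left-tail dichotomy in Subcase~2.2. Both will follow from the support-contact machinery already recorded, chiefly Proposition~\ref{prop:support-input}, Lemma~\ref{lem:contact-alternatives} and Theorem~\ref{thm:imported-support-contact}. The enumeration step itself will be purely combinatorial.

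\textbf{Right tail.} In Case~2 the positive-side roles are outer, so by Lemma~\ref{lem:contact-alternatives} the selected $L_{150^\circ}$ and $L_{120^\circ}$ witnesses are $P_8$ and $P_7$ on the right tail $\gamma_{P_3Q}$. The auxiliary contact $P_4$ is by definition also a selected witness on $\gamma_{P_3Q}$. All three witnesses therefore have parameters in the closed interval $[t(P_3),\ell]$, and $P_3$ precedes each of them in weak order.

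Proposition~\ref{prop:support-input}(2) explicitly declines to order tail witnesses among themselves. The safe route is therefore not to prune, but to note that every weak linear order of the three labels $P_4,P_7,P_8$ after $P_3$ is one of the $3!=6$ permutations displayed. Ties are weak-order degeneracies in the closed convention of Section~\ref{subsec:lambda}, and each tie lies in the closure of two adjacent strict orders. So the six weak orders exhaust all admissible cases. The argument is then simply to check that the displayed list contains all six permutations: $478, 874, 487, 748, 784, 847$ are exactly the permutations of $\{4,7,8\}$.

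\textbf{Left tail in Subcase~2.2.} Here the negative side has the outer $L_{-120^\circ}$ role, so $P_{-7}$ lies on $\gamma_{PP_{-3}}$. The vertical contact $P_{-6}$ is also a left-tail witness in this subcase, by its role assignment. The two witnesses $P_{-7}$ and $P_{-6}$ both precede $P_{-3}$, and their relative order can be either of the two displayed. Coincidence is again a weak equality covered by both.

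I do not expect to need any further exclusion, for example via the unimodality of Theorem~\ref{thm:imported-support-contact}. The statement only claims containment in the list, and the remaining raw orders are certified separately rather than reduced. This is also why no endpoint replacement is required: no raw order is mapped onto a representative one.

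\textbf{Main obstacle.} The delicate point is justifying that $P_4$, and in 2.2 also $P_{-6}$, really are confined to the appropriate tail. If a vertical or auxiliary witness could lie in the middle chain, extra interleavings with $P_3$ or $P_{-3}$ would appear. I would first try to derive this confinement from the branch definition of Case~2 together with Lemma~\ref{lem:order}, which forbids tail contacts from interleaving the middle skeleton. Failing that, I would fold the extra positions into the closed weak order as coincidences with $P_{\pm3}$. Either way, the weak-closure convention of Lemma~\ref{lem:closed-contact-order} ensures that these limiting degeneracies remain inside the listed closed models.
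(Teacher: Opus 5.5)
Your proposal matches the paper's proof. Both place $P_4,P_7,P_8$ on $\gamma_{P_3Q}$ (and $P_{-7},P_{-6}$ before $P_{-3}$ in Subcase~2.2) via the role assignments, list the $3!$ and $2!$ weak orders with ties treated as degeneracies, and note that the non-representative orders are certified directly, so no endpoint replacement occurs.

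On the obstacle you flagged, drop the fallback, since a strictly inner $P_4$ is not a coincidence with $P_3$. The confinement follows from Lemma~\ref{lem:interleaving}: because $t(P_7)>t(P_3)$, the pair $\{P_3,P_7\}$ separates $P_4$ from $P_0$ on $\partial\conv(\gamma)$, so $t(P_4)<t(P_3)$ would make $\gamma|_{[t(P_3),t(P_7)]}$ interleave with the subarc between $P_4$ and $P_0$; symmetrically, $\{P_{-7},P_{-3}\}$ separates $P_{-6}$ from $P_0$.
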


\begin{proof}
See Appendix~\ref{app:lemmas}.
\end{proof}

\begin{lemma}[Case 3 exhaustive raw-order enclosure]\label{lem:case3-order-reduction}
In Case~3, the right-tail contacts $P_4,P_5,P_7,P_8$ occur in a weak parameter order represented by one of the $24$ strict linear orders of these four labels. The reflected left tail is treated analogously. For each of the resulting $24\times24$ left-right order pairings, the corresponding closed SOCP model is included in the certified model family; the three pairings 3a--3c are representative notebook models, and the other $573$ pairings are auxiliary raw-order models.
\end{lemma}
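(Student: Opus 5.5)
The claim is essentially combinatorial, and I would prove it in three steps.

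\emph{Step 1: the right-tail contacts are genuine witnesses.} In Case~3 all four positive-side roles are outer. By Lemma~\ref{lem:contact-alternatives} and Proposition~\ref{prop:support-input}, the $L_{150^\circ}$ and $L_{120^\circ}$ witnesses are then $P_8,P_7$ on the right tail $\gamma_{P_3Q}$. The auxiliary contacts $P_4,P_5$ are selected, by the branch definition, as witnesses on the same closed parameter interval. So each of the four labels has a selected witness parameter in $[t(P_3),\ell]$. Proposition~\ref{prop:support-input}(2) explicitly makes no order assertion about outer tail witnesses. I therefore make no appeal to unimodality and simply treat the four parameters as arbitrary real numbers.

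\emph{Step 2: weak orders are enclosed by strict ones.} Four real numbers $t_4,t_5,t_7,t_8$ admit a permutation $\sigma$ of $\{4,5,7,8\}$ with
\[
t_{\sigma(1)}\le t_{\sigma(2)}\le t_{\sigma(3)}\le t_{\sigma(4)}.
\]
Ties are broken arbitrarily. This is harmless for two reasons. First, the closed witness convention of Section~\ref{subsec:lambda} reads coincidences as weak equalities. Second, each closed model uses only the weak inequalities $t_{\sigma(i)}\le t_{\sigma(i+1)}$. Hence a tied configuration lies in the feasible set of every strict order compatible with it, and in particular in at least one. This gives the $24$ strict linear orders, each read as a weak chain.

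The reflection in the vertical line carries the left-tail roles $P_{-4},P_{-5},P_{-7},P_{-8}$ on $\gamma_{PP_{-3}}$ to the right-tail situation. It preserves the closed witness convention, so the same argument yields $24$ weak orders on the left.

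\emph{Step 3: the product, and the step I expect to be the main obstacle.} The two tails are disjoint subarcs separated by the middle chain, and Lemma~\ref{lem:order} forbids interleaving of tail contacts with the middle skeleton. The left and right choices are therefore independent, and every Case~3 configuration lies in one of the $24\times 24=576$ closed models indexed by a pair of orders.

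The main obstacle is the bookkeeping in this step, not geometry. I must check two things:
\begin{enumerate}
\item Every one of the $576$ pairings is actually present in the certified family. I would do this by matching each pairing against the ledger in Table~\ref{tab:branch-model-ledger}, identifying 3a--3c as three of them and the remaining $573$ as raw-order models.
\item No pairing was dropped by a tacit symmetry argument. I would therefore avoid quotienting by the global reflection, since that reflection also swaps which tail is ``right'' and would need its own justification. Instead I would list all $576$ pairings explicitly.
\end{enumerate}
The counts are then consistent: $12+6\cdot(\text{Case 2 variants})+573$ should match the stated total of $587$ auxiliary models, which is a useful check.
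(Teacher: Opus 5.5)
Your proposal is correct and follows the paper's proof essentially line for line. Both arguments place the four witnesses on the closed interval $\gamma_{P_3Q}$ and enclose their weak order by one of the $24$ strict orders, reading ties as weak degenerations; both treat the left tail the same way and split the $576$ pairings into 3a--3c, namely $(L_1,R_1)$, $(L_1,R_{24})$, $(L_{24},R_{24})$, plus $573$ raw-order models, with no geometric exclusion or reflection quotient. The one slip is your closing sanity check, which does not balance as written: the $587$ auxiliary models are the $14$ nonrepresentative Case~2 models ($4+4+6$) plus the $573$ Case~3 models, and $12+587=599$ gives the full family.
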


\begin{proof}
See Appendix~\ref{app:lemmas}.
\end{proof}

\section{The triangle, canonical orientations, and escape inequalities}\label{sec:triangle}

\subsection{Definition of the triangle \texorpdfstring{$T$}{T}}

Let $T$ be the $30^{\circ}\!-\!60^{\circ}\!-\!90^{\circ}$ triangle for which a square of side $1/3$ rests on its hypotenuse. Denote the side lengths by
\[
a = \frac{3+4\sqrt{3}}{18}, \qquad b = \frac{4+\sqrt{3}}{6}, \qquad c = 2a = \frac{3+4\sqrt{3}}{9},
\]
where $a$ is the short leg opposite the $30^{\circ}$ angle, $b$ is the long leg opposite the $60^{\circ}$ angle, and $c$ is the hypotenuse. Thus
\[
\frac{b}{a}=\sqrt{3},
\qquad
\frac{c}{a}=2.
\]
The shortest side of $T$ is therefore the side of length $a$, and the area is
\[
\frac12ab=\frac{24+19\sqrt3}{216}\approx 0.263467.
\]

\subsection{Canonical orientations}

A \emph{canonical orientation} of $T$ places the $30^{\circ}$ corner at a named anchor point and aligns the two sides adjacent to that corner with prescribed support lines. We denote these orientations by symbols such as $T_L$, $T_R$, etc. When naming the supporting lines we distinguish, as in Section~\ref{subsec:monotonicity}, the undirected \emph{line direction} of a side from the \emph{oriented support label} $L_\theta$, which by our convention has $\gamma$ on the left of $e_\theta$; the two labels of the same side differ by $180^\circ$. For example, in orientation $T_L$ the $30^{\circ}$ corner is at an anchor $L$, the hypotenuse of length $c$ lies along the lower support $L_{0^{\circ}}$, and the long leg of length $b$ lies along the oriented support $L_{-150^{\circ}}$, whose line direction is $30^{\circ}$ and whose contact witness in Subcase~1.1 is $P_{-1}$. The remaining side, of length $a$, is the short side opposite the $30^{\circ}$ corner.

In this orientation the short side is the line
\[
\{X\in\R^2 : d(\overrightarrow{LX},30^{\circ}) = b\}.
\]
Indeed, if the anchor is placed at the origin, then the two endpoints of the short side are
\[
(c,0)
\qquad\text{and}\qquad
b(\cos 30^{\circ},\sin 30^{\circ}),
\]
and both satisfy $d(X,30^{\circ})=b$. Thus the angle $30^{\circ}$ gives the outward normal direction to the short side, not the direction of the short side itself. We denote this normal direction by $\theta_{\mathrm{short}}$.

Other orientations are obtained by rotations and reflections, in the congruent-copy convention used for the cover. In orientation $T_R$ the anchor is at $R$ with the hypotenuse along the lower support $L_{0^{\circ}}$ (side pointing in line direction $180^{\circ}$ from $R$) and the long leg along the oriented support $L_{150^{\circ}}$ through its witness $P_1$, so the short side is given by
\[
d(\overrightarrow{RX},150^{\circ})=b.
\]
Likewise in every canonical orientation, $\theta_{\mathrm{short}}$ denotes the outward normal direction to the short side of length $a$, and the corresponding short side is located at directional offset $b$ from the anchor. The finite family below is used as a set of forced test placements in the contradiction argument; it is not an attempt to parametrize all congruent placements of $T$.

Figure~\ref{fig:canonical} shows all twelve canonical orientations, grouped by anchor location: floor ($T_L$, $T_{L'}$, $T_R$, $T_{R'}$), walls ($T_S$, $T_T$), ceiling ($T_U$, $T_{U'}$, $T_V$, $T_{V'}$), and wedge ($T_W$, $T_{W'}$). The notebook anchor names are branch-local coordinate names rather than global orientation labels; the dictionary between them is fixed in Appendix~\ref{app:triangle-normalization} (in particular, in Case~2 the anchor $V$ represents the placement $T_{V'}$, while in Case~3 it represents $T_V$).

Table~\ref{tab:orientation-predicates} records the side directions and escape-normal directions used by these orientations. In the table, $A$ is the displayed anchor, the two side-direction columns identify the adjacent sides of lengths $b$ and $c$ meeting at the $30^\circ$ corner, and the branch-specific escaping point $X$ is the named support contact selected in the case tree. The predicate column is the affine inequality imposed when the corresponding canonical placement is forced; its direction is the length-$b$ side direction because the short side of length $a$ is reached at offset $b$ from the anchor.

\begin{table}[t]
\centering
\caption{Canonical orientations and escape predicates. The side-direction columns give undirected line directions; the last column lists the oriented support labels of the two adjacent sides in the convention of Section~\ref{subsec:support-distance}.}\label{tab:orientation-predicates}
\fontsize{6.5}{7.2}\selectfont
\setlength{\tabcolsep}{2pt}
\renewcommand{\arraystretch}{1.08}
\begin{tabular}{@{}llllll@{}}
\toprule
\textbf{Orientation} & \textbf{Anchor} & \textbf{$b$-side dir.} & \textbf{$c$-side dir.} & \textbf{Short-side escape predicate} & \textbf{Oriented supports} \\
\midrule
$T_L$ & $L$ & $30^\circ$ & $0^\circ$ & $d(\overrightarrow{LX},30^\circ)>b$ & $L_{0^\circ}$,\ $L_{-150^\circ}$ \\
$T_{L'}$ & $L$ & $0^\circ$ & $30^\circ$ & $d(\overrightarrow{LX},0^\circ)>b$ & $L_{0^\circ}$,\ $L_{-150^\circ}$ \\
$T_R$ & $R$ & $150^\circ$ & $180^\circ$ & $d(\overrightarrow{RX},150^\circ)>b$ & $L_{0^\circ}$,\ $L_{150^\circ}$ \\
$T_{R'}$ & $R$ & $180^\circ$ & $150^\circ$ & $d(\overrightarrow{RX},180^\circ)>b$ & $L_{0^\circ}$,\ $L_{150^\circ}$ \\
$T_S$ & $S$ & $-90^\circ$ & $-60^\circ$ & $d(\overrightarrow{SX},-90^\circ)>b$ & $L_{-90^\circ}$,\ $L_{120^\circ}$ \\
$T_T$ & $T$ & $-90^\circ$ & $-120^\circ$ & $d(\overrightarrow{TX},-90^\circ)>b$ & $L_{90^\circ}$,\ $L_{-120^\circ}$ \\
$T_U$ & $U$ & $-30^\circ$ & $0^\circ$ & $d(\overrightarrow{UX},-30^\circ)>b$ & $L_{180^\circ}$,\ $L_{-30^\circ}$ \\
$T_{U'}$ & $U$ & $0^\circ$ & $-30^\circ$ & $d(\overrightarrow{UX},0^\circ)>b$ & $L_{180^\circ}$,\ $L_{-30^\circ}$ \\
$T_V$ & $V$ & $-150^\circ$ & $180^\circ$ & $d(\overrightarrow{VX},-150^\circ)>b$ & $L_{180^\circ}$,\ $L_{30^\circ}$ \\
$T_{V'}$ & $V$ & $180^\circ$ & $-150^\circ$ & $d(\overrightarrow{VX},180^\circ)>b$ & $L_{180^\circ}$,\ $L_{30^\circ}$ \\
$T_W$ & $W$ & $-75^\circ$ & $-105^\circ$ & $d(\overrightarrow{WX},-75^\circ)>b$ & $L_{-105^\circ}$,\ $L_{105^\circ}$ \\
$T_{W'}$ & $W$ & $-105^\circ$ & $-75^\circ$ & $d(\overrightarrow{WX},-105^\circ)>b$ & $L_{-105^\circ}$,\ $L_{105^\circ}$ \\
\bottomrule
\end{tabular}
\end{table}

Because sign conventions are central to the proof, Table~\ref{tab:short-side-equations} makes each row of Table~\ref{tab:orientation-predicates} checkable without mental rotation or reflection: for every canonical orientation it lists the affine coordinate equation of the short side, namely the level line $d(\overrightarrow{AX},\theta_{\rm short})=b$, together with the two closed support half-planes containing $\gamma$, written with the branch-local anchor coordinates of Appendix~\ref{app:triangle-normalization}.

\begin{table}[t]
\centering
\caption{Short-side affine equations and adjacent support half-planes for the canonical orientations. Here $X=(x,y)$, the anchors are $L=(x_L,y_3)$, $R=(x_R,y_3)$, $S=(x_{-6},y_S)$, $T=(x_6,y_T)$, $U=(x_U,y_0)$, $V=(x_V,y_0)$, $W=(x_W,y_W)$, and each half-plane row is the closed support condition containing $\gamma$.}\label{tab:short-side-equations}
\scriptsize
\setlength{\tabcolsep}{2pt}
\renewcommand{\arraystretch}{1.14}
\begin{tabularx}{\textwidth}{@{}llp{4.55cm}>{\raggedright\arraybackslash}X@{}}
\toprule
\textbf{Orient.} & \textbf{$\theta_{\rm short}$} & \textbf{Short side $d(\overrightarrow{AX},\theta_{\rm short})=b$} & \textbf{Support half-planes containing $\gamma$} \\
\midrule
$T_L$ & $30^\circ$ & $\tfrac{\sqrt3}{2}(x-x_L)+\tfrac12(y-y_3)=b$ & $y\ge y_3$;\ \ $y-y_3\le\tan30^\circ\,(x-x_L)$ \\
$T_{L'}$ & $0^\circ$ & $x-x_L=b$ & same as $T_L$ \\
$T_R$ & $150^\circ$ & $-\tfrac{\sqrt3}{2}(x-x_R)+\tfrac12(y-y_3)=b$ & $y\ge y_3$;\ \ $y-y_3\le\tan(-30^\circ)\,(x-x_R)$ \\
$T_{R'}$ & $180^\circ$ & $x_R-x=b$ & same as $T_R$ \\
$T_S$ & $-90^\circ$ & $y_S-y=b$ & $x\ge x_{-6}$;\ \ $y-y_S\le\tan120^\circ\,(x-x_{-6})$ \\
$T_T$ & $-90^\circ$ & $y_T-y=b$ & $x\le x_6$;\ \ $y-y_T\le\tan60^\circ\,(x-x_6)$ \\
$T_U$ & $-30^\circ$ & $\tfrac{\sqrt3}{2}(x-x_U)-\tfrac12(y-y_0)=b$ & $y\le y_0$;\ \ $y-y_0\ge\tan(-30^\circ)\,(x-x_U)$ \\
$T_{U'}$ & $0^\circ$ & $x-x_U=b$ & same as $T_U$ \\
$T_V$ & $-150^\circ$ & $-\tfrac{\sqrt3}{2}(x-x_V)-\tfrac12(y-y_0)=b$ & $y\le y_0$;\ \ $y-y_0\ge\tan30^\circ\,(x-x_V)$ \\
$T_{V'}$ & $180^\circ$ & $x_V-x=b$ & same as $T_V$ \\
$T_W$ & $-75^\circ$ & $\cos75^\circ\,(x-x_W)-\sin75^\circ\,(y-y_W)=b$ & $y-y_W\le\tan75^\circ\,(x-x_W)$;\newline $y-y_W\le\tan(-75^\circ)\,(x-x_W)$ \\
$T_{W'}$ & $-105^\circ$ & $-\cos75^\circ\,(x-x_W)-\sin75^\circ\,(y-y_W)=b$ & same as $T_W$ \\
\bottomrule
\end{tabularx}
\end{table}

\begin{figure}[t]
\centering
\includegraphics[width=0.95\textwidth]{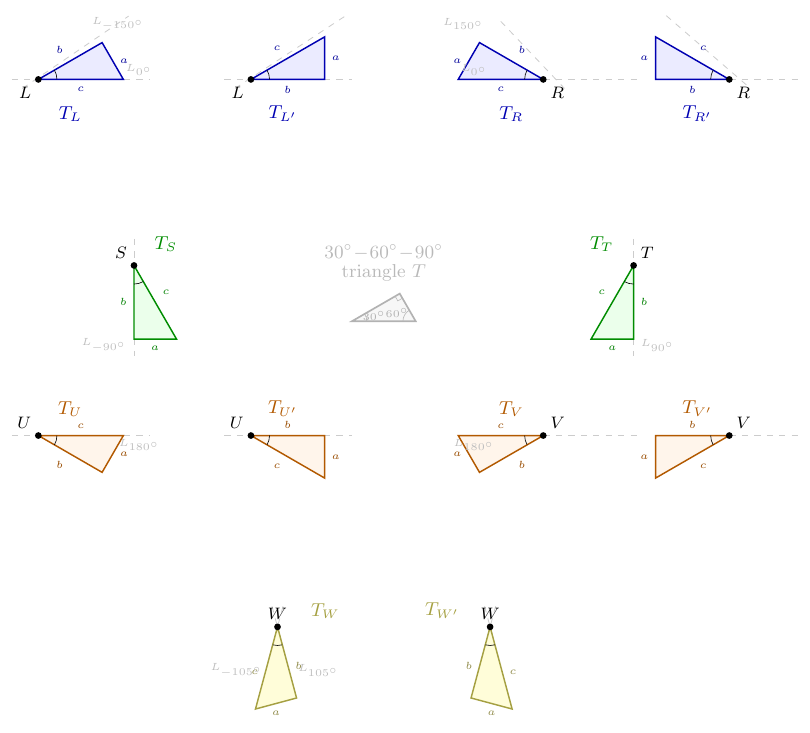}
\caption[Canonical orientations of $T$]{The twelve canonical orientations of $T$, grouped by anchor location. Row~1 (blue): floor orientations $T_L$, $T_{L'}$, $T_R$, $T_{R'}$ with anchors on $L_{0^{\circ}}$. Row~2 (green): wall orientations $T_S$, $T_T$ with anchors on $L_{\pm 90^{\circ}}$. Row~3 (orange): ceiling orientations $T_U$, $T_{U'}$, $T_V$, $T_{V'}$ with anchors on $L_{180^{\circ}}$. Row~4 (yellow): wedge orientations $T_W$, $T_{W'}$ with anchors at the intersection of steep support lines.}
\label{fig:canonical}
\end{figure}

\subsection{Escape predicate}

For a canonical orientation with anchor $A$ and outward normal direction $\theta_{\mathrm{short}}$ to the short side, the \emph{escape predicate} for a point $X$ is
\begin{equation}\label{eq:escape}
d(\overrightarrow{AX}, \theta_{\mathrm{short}}) > b.
\end{equation}
The short side is the level line $d(\overrightarrow{AX}, \theta_{\mathrm{short}})=b$, so \eqref{eq:escape} places $X$ strictly beyond it; the threshold is $b$---the directional offset from the $30^{\circ}$ corner to the short side---although the escaped side has length $a$. When the short side is vertical, the predicate takes the coordinate form $x_A + b < x_X$ or $x_X < x_A - b$.

\begin{lemma}[Canonical escape tests]\label{lem:canonical-tests}
Let $\gamma$ be a normalized obstruction not contained in any congruent copy of $T$. Consider any canonical placement whose $30^{\circ}$ corner is an anchor $A$ and whose two sides adjacent to that corner lie on supporting lines of $\gamma$. Then $\gamma$ has a point satisfying the escape predicate
\[
d(\overrightarrow{AX},\theta_{\mathrm{short}})>b.
\]
Moreover, the escaping point may be chosen on the support face of $\conv(\gamma)$ exposed by the direction $\theta_{\mathrm{short}}$; this gives the corresponding selected named witness, with coincident contacts and exposed segments interpreted as closed weak-order degeneracies.
\end{lemma}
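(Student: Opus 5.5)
The plan is a contrapositive argument that uses only the convexity of the placed triangle and the description of $T$ as the intersection of three closed half-planes. Fix a canonical placement $T_A$ with $30^\circ$ corner at $A$, and with the two sides adjacent to $A$ (of lengths $b$ and $c$) lying on support lines $L_{\theta_1},L_{\theta_2}$ of $\gamma$. We first check, orientation by orientation from Table~\ref{tab:short-side-equations}, that the triangle equals the intersection
\[
T_A=H_{\theta_1}\cap H_{\theta_2}\cap\{X: d(\overrightarrow{AX},\theta_{\rm short})\le b\}.
\]
Here $H_{\theta_i}$ are the closed support half-planes containing $\gamma$. The two support half-planes form the closed $30^\circ$ wedge with apex $A$. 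The third half-plane is bounded by the short side, which is the level line $d(\overrightarrow{AX},\theta_{\rm short})=b$ computed in the text for $T_L$ and transported to the other orientations by rotation and reflection. The wedge cut by this line is exactly the triangle with vertices $A$, $A+b\,e_{\theta_{\rm short}}$ and the point at distance $c$ along the other side, since $b=c\cos30^\circ$ matches $c=2a$, $b=\sqrt3a$.

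Since $\gamma$ lies in both support half-planes by definition of support lines, the assumption that no point escapes, i.e.\ $d(\overrightarrow{AX},\theta_{\rm short})\le b$ for all $X\in\gamma$, gives $\gamma\subset T_A$. Because $T_A$ is convex, this also gives $\conv(\gamma)\subset T_A$. This contradicts noncoverage, which for weak obstructions in Definition~\ref{def:weak-obstruction} is phrased via $\conv(\eta)$. Hence some $X\in\gamma$ satisfies the strict predicate \eqref{eq:escape}.

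For the \emph{moreover} clause, the linear functional $X\mapsto\langle X-A,e_{\theta_{\rm short}}\rangle$ attains its maximum over the compact set $\gamma$. The maximum over $\conv(\gamma)$ is the same and is attained on the face of $\conv(\gamma)$ exposed by $e_{\theta_{\rm short}}$. That maximum exceeds $b$, so every maximizer escapes. Extreme points of the exposed face lie in $\gamma$, so a point of $\gamma$ on this face exists; we select it as the named witness. If the face is a segment, we choose the first or last contact in parameter, as in the closed witness convention. Coincidences with other named witnesses are then weak equalities, consistent with Lemma~\ref{lem:closed-contact-order}.

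The main obstacle is bookkeeping rather than mathematics. For each of the twelve orientations we must verify that the side of the short-side line containing $A$ is $d\le b$ and not $d\ge b$. Equivalently, $e_{\theta_{\rm short}}$ must be the outward normal, with $d(\overrightarrow{AA},\theta_{\rm short})=0<b$, and both support half-planes must face into the wedge. We would check this row by row against Table~\ref{tab:short-side-equations}, using the vertex $A+b\,e_{\theta_{\rm short}}$ (in the unprimed orientations, the $b$-side direction coincides with $\theta_{\rm short}$). The reflected primed orientations need separate care.
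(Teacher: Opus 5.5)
Your proposal is correct and follows the paper's proof essentially step for step: the placed triangle is the closed $30^\circ$ wedge cut by the half-plane $d(\overrightarrow{AX},\theta_{\rm short})\le b$, containment in the wedge plus noncoverage forces an escaping point, and the maximum of $X\mapsto d(\overrightarrow{AX},\theta_{\rm short})$ is attained at a point of $\gamma$ on the exposed face. Your explicit use of convexity ($\gamma\subset T_A\Rightarrow\conv(\gamma)\subset T_A$) and of extreme points of the face is slightly more careful than the paper's wording. The primed orientations need no separate care, since the $b$-side direction equals $\theta_{\rm short}$ in every row of Table~\ref{tab:orientation-predicates}, primed rows included.
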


\begin{proof}
The two adjacent sides of the canonical placement lie on supporting lines and are oriented so that $\gamma$ is contained in the closed $30^{\circ}$ angle bounded by them. The triangle is the intersection of this angle with the half-plane
\[
d(\overrightarrow{AX},\theta_{\mathrm{short}})\le b
\]
cut off by the short side. Since $\gamma$ is not contained in this congruent copy of $T$, and it is already contained in the angle, it must contain a point with $d(\overrightarrow{AX},\theta_{\mathrm{short}})>b$. The continuous linear functional $X\mapsto d(\overrightarrow{AX},\theta_{\mathrm{short}})$ attains its maximum on the compact set $\conv(\gamma)$ on the exposed support face in direction $\theta_{\mathrm{short}}$. If that face is a single named contact this gives the stated contact point; if it is a segment or several named contacts coincide, the selected witness convention records the required endpoint witnesses when the branch asks for them, and the closed numerical model records the corresponding weak inequality.
\end{proof}

\begin{lemma}[Forced canonical placements in the case tree]\label{lem:forced-placements}
Under the hypotheses of Lemma~\ref{lem:canonical-tests}, every canonical placement used in the certified branches of Section~\ref{sec:cases} satisfies the support-line hypotheses of Lemma~\ref{lem:canonical-tests}. Consequently each escape inequality listed in the case tree and in Appendix~\ref{app:summary} or Appendix~\ref{app:raw-order-cert} is forced by noncoverage.
\end{lemma}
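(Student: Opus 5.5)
The plan is to check the statement placement by placement: for every canonical placement actually used in a certified branch, verify the hypothesis of Lemma~\ref{lem:canonical-tests}. That hypothesis has two parts. The two sides adjacent to the $30^\circ$ corner must lie on supporting lines of $\gamma$, with $\gamma$ in the closed angle they bound. Once this holds, the escape row follows from Lemma~\ref{lem:canonical-tests} together with the hypothesis that $\gamma$ is not covered.

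\textbf{Step 1: the anchor is a support intersection.} Using Table~\ref{tab:orientation-predicates}, each orientation names two oriented supports. Examples are $L_{0^\circ},L_{-150^\circ}$ for $T_L,T_{L'}$; $L_{-90^\circ},L_{120^\circ}$ for $T_S$; and $L_{-105^\circ},L_{105^\circ}$ for $T_W,T_{W'}$. The anchor is defined in the branch-local coordinates of Appendix~\ref{app:triangle-normalization} as the intersection point of these two support lines. For instance, $L=(x_L,y_3)$ is the meeting point of $L_{0^\circ}$ and $L_{-150^\circ}$. Each such support line exists for the compact set $\gamma$, because $h_\theta$ is a maximum over a compact set. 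Moreover, $\gamma\subset H_{\theta_1}\cap H_{\theta_2}$. I would then check that the two oriented supports differ by the correct angle, namely $150^\circ$ or $210^\circ$. With that orientation, $H_{\theta_1}\cap H_{\theta_2}$ is exactly the closed $30^\circ$ angle at the anchor, and its two boundary rays carry the $b$- and $c$-sides in the directions listed. This is a finite trigonometric check row by row, and it is essentially the content of the half-plane column of Table~\ref{tab:short-side-equations}.

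\textbf{Step 2: compatibility with the branch data.} Next I would confirm that the support witnesses used in each branch lie on the supports invoked: $P_{-1}$ on $L_{-150^\circ}$, $P_1$ on $L_{150^\circ}$, $P_{\pm6}$ on $L_{\pm90^\circ}$, and so on. The wedge supports $L_{\pm105^\circ}$ used by $T_W$ are among the selected wedge witnesses carried through compactness. This matters because the affine rows use the anchor coordinates through these witnesses. Since support lines are determined by the hull alone, $\conv(\gamma)$ lies in the angle regardless of which witness was chosen. Lemma~\ref{lem:closed-contact-order} then guarantees that the selected witnesses still lie on the limiting support faces.

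\textbf{Step 3: conclude by noncoverage.} Noncoverage of $\gamma$ applies to every congruent copy of $T$, reflections included, and therefore to each canonical placement. Lemma~\ref{lem:canonical-tests} then yields a point beyond the short side, selectable on the face exposed in direction $\theta_{\rm short}$. In the closed models this becomes the weak row $d(\overrightarrow{AX},\theta_{\rm short})\ge b$, which only enlarges the feasible set. For the named escaping witness $X$ of each branch, I would argue that this exposed-face point is exactly the branch-selected contact. The case-tree definition picks $X$ on that face, for example $P_3$ or a tail witness, so the listed inequality is the one forced.

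\textbf{Main obstacle.} I expect the main difficulty to be the bookkeeping in Steps 1--2 across all rows, especially the orientation and sign conventions. Each pair of oriented supports must bound the $30^\circ$ angle containing $\gamma$ and not the supplementary $150^\circ$ angle. The notebook anchor dictionary must also be applied correctly; for example, $V$ means $T_{V'}$ in Case~2 and $T_V$ in Case~3. To settle this I would tie each row to Table~\ref{tab:short-side-equations} and to the row-audit Lemma~\ref{lem:row-audit}, which checks that every model row is the affine form of its forcing placement.
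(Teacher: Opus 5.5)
Your plan is correct and follows essentially the same route as the paper. Both arguments take the anchor to be the intersection of the two branch-supplied support lines, whose oriented half-planes (labels differing by $150^\circ$ or $210^\circ$) cut out a closed $30^\circ$ angle containing $\gamma$. Both then apply Lemma~\ref{lem:canonical-tests} and read off the rows through the predicate dictionary and the row audit.

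Your explicit identification of each named escaping witness with a contact on the face exposed in direction $\theta_{\rm short}$ (e.g.\ $P_7$ on $L_{120^\circ}$ for \texttt{farL}, $P_6$ on $L_{90^\circ}$ for the \texttt{condL} guard) sharpens what the paper's ``forcing certificate'' merely asserts. One caveat: for the Subcase~1.1 delimiter substitution $P_{3.5}=P_6$, the escape row comes from the branch split of Lemma~\ref{lem:subcase11-tail-reduction}, not from that identification.
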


\begin{proof}
The case tree is built only from support-role data selected as witnesses in Section~\ref{subsec:lambda}. In each branch, a canonical placement is introduced precisely when the branch data provide the two support faces carrying the sides adjacent to the $30^\circ$ corner of the corresponding orientation in Figure~\ref{fig:canonical}. The floor placements use the lower horizontal support together with the adjacent $30^\circ$ or $150^\circ$ support; the wall placements use the appropriate vertical support together with the adjacent steep support; the ceiling placements use the upper horizontal support together with the adjacent reflected steep support; and the wedge placements use the two steep support lines meeting at the wedge anchor. In every case the two side directions differ by $30^\circ$ and their oriented closed half-planes contain $\gamma$, because they are support half-planes of $\conv(\gamma)$.

The intersection of the two supporting lines is therefore the required anchor $A$, and $\gamma$ lies in the closed $30^\circ$ angle bounded by the adjacent sides of that canonical copy of $T$. Lemma~\ref{lem:canonical-tests} applies. The named predicate in the notebook is just the affine form of the resulting incidence or escape condition: \texttt{onlineL}, \texttt{onlineR}, \texttt{onlineU}, \texttt{onlineV}, \texttt{onlineWl}, and \texttt{onlineWr} record side incidences; \texttt{farL}, \texttt{farR}, \texttt{farU}, and \texttt{farV} record escape inequalities; \texttt{condS}, \texttt{condT}, \texttt{condSfarL}, and \texttt{condTfarR} bundle the indicated incidence and escape rows; and the Case~2 predicate \texttt{condV} records \texttt{onlineV} together with the guard $x_V-x_{-6}\ge b$ used in Subcases~2.2a--b. Reflected placements are identical after applying the fixed reflection convention.
\[
(\hbox{placement},\hbox{adjacent support sides},A,X,\theta_{\rm short})
\]
is the forcing certificate for an individual escape row: once the branch supplies the two adjacent support sides through anchor $A$, noncoverage of that canonical copy forces the named escaping witness $X$ to satisfy $d(\overrightarrow{AX},\theta_{\rm short})\ge b$ in the closed model. The compound predicates are shorthand for this certificate plus the stated side-incidence row, as displayed in Tables~\ref{tab:forced-placement-families} and~\ref{tab:forced-guard-rows}.
\end{proof}

\begin{table}[t]
\centering
\caption{Geometric meaning of the forced placement predicate families. The full affine rows are listed in Table~\ref{tab:predicate-dictionary}; this table records the support-side source of each family.}\label{tab:forced-placement-families}
\scriptsize
\setlength{\tabcolsep}{3pt}
\renewcommand{\arraystretch}{1.08}
\begin{tabularx}{\textwidth}{@{}p{2.05cm}p{1.2cm}p{3.2cm}p{2.4cm}X@{}}
\toprule
\textbf{Predicate family} & \textbf{Anchor} & \textbf{Adjacent support sides} & \textbf{Incidence row} & \textbf{Forced escape or guard row} \\
\midrule
\texttt{onlineL}, \texttt{condL}, \texttt{farL} & $L$ & lower support $L_0$ and the $30^\circ$ side through $L$ & $P_i$ lies on the $30^\circ$ side through $L$ & $d(\overrightarrow{LP_j},30^\circ)\ge b$; \texttt{condL} uses the closed guard $x_L+b\le x_6$. \\
\texttt{onlineR}, \texttt{condR}, \texttt{farR} & $R$ & lower support $L_0$ and the $150^\circ$ side through $R$ & $P_i$ lies on the $150^\circ$ side through $R$ & $d(\overrightarrow{RP_j},150^\circ)\ge b$; \texttt{condR} uses the closed guard $x_{-6}\le x_R-b$. \\
\texttt{onlineS}, \texttt{condS}, \texttt{condSfarL} & $S$ & vertical support at $P_{-6}$ and the $120^\circ$ side through $S$ & $P_i$ lies on the $120^\circ$ side through $S$ & \texttt{condS} uses $y_3\le y_S-b$; \texttt{condSfarL} also includes $d(\overrightarrow{LP_i},30^\circ)\ge b$. \\
\texttt{onlineT}, \texttt{condT}, \texttt{condTfarR} & $T$ & vertical support at $P_6$ and the $60^\circ$ side through $T$ & $P_i$ lies on the $60^\circ$ side through $T$ & \texttt{condT} uses $y_3\le y_T-b$; \texttt{condTfarR} also includes $d(\overrightarrow{RP_i},150^\circ)\ge b$. \\
\texttt{onlineU}, \texttt{farU} & $U$ & upper support $L_{180^\circ}$ and the reflected $-30^\circ$ side through $U$ & $P_{-4}$ lies on the side through $U$ & $d(\overrightarrow{UP_5},-30^\circ)\ge b$. \\
\texttt{onlineV}, \texttt{farV}, \texttt{condV} & $V$ & upper support $L_{180^\circ}$ and the $30^\circ$ side through $V$ & $P_4$ lies on the side through $V$ & \texttt{farV} is $d(\overrightarrow{VP_{-5}},-150^\circ)\ge b$; \texttt{condV} uses the Case~2 guard $x_V-x_{-6}\ge b$. \\
\texttt{onlineWl}, \texttt{onlineWr}, \texttt{dot\_gt\_b} & $W$ & the two steep wedge sides through $W$, on the oriented supports $L_{-105^\circ}$ and $L_{105^\circ}$ (line directions $75^\circ$ and $-75^\circ$) & selected $T_W$ side contacts lie on the corresponding wedge sides & explicit rows $d(\overrightarrow{WP_i},-105^\circ)\ge b$ or $d(\overrightarrow{WP_i},-75^\circ)\ge b$, as recorded in the JSON model. \\
\bottomrule
\end{tabularx}
\end{table}

The supplementary forced-placement CSV is generated from the certified model JSON. Each row records the model id, placement family, anchor, incidence rows, escape or guard row, predicate name, and point labels; Table~\ref{tab:forced-placement-families} is the family-level summary of those generated audit rows. Lemma~\ref{lem:row-audit} in Appendix~\ref{app:audit} states the corresponding row-level soundness obligation---every certified affine escape or guard row has exactly one audit record, and the record reconstructs the same coefficient vector and constant term as the model row---and the one-command verifier of Appendix~\ref{app:repro} checks this obligation as part of the proof verification.

\begin{table}[p]
\centering
\caption{Explicit forcing of compound guard rows. Each row is a closed escape inequality for the listed canonical placement, possibly bundled with an incidence equality.}\label{tab:forced-guard-rows}
\scriptsize
\setlength{\tabcolsep}{3pt}
\renewcommand{\arraystretch}{1.08}
\begin{tabularx}{\textwidth}{@{}>{\raggedright\arraybackslash}p{2.2cm}>{\raggedright\arraybackslash}p{2.3cm}>{\raggedright\arraybackslash}p{2.55cm}>{\raggedright\arraybackslash}p{3.25cm}>{\raggedright\arraybackslash}X@{}}
\toprule
\textbf{Predicate} & \textbf{Placement certificate} & \textbf{Escaping witness} & \textbf{Closed row} & \textbf{Where used} \\
\midrule
\texttt{condL} & $T_L$ at $L$; sides $L_0$ and $L_{-150^\circ}$ & vertical delimiter $P_6$ & $d(\overrightarrow{LP_6},0^\circ)=x_6-x_L\ge b$ & Subcases 1.1a--c. \\
\texttt{condR} & $T_R$ at $R$; sides $L_0$ and $L_{150^\circ}$ & vertical delimiter $P_{-6}$ & $d(\overrightarrow{RP_{-6}},180^\circ)=x_R-x_{-6}\ge b$ & Subcases 1.1a--c. \\
\texttt{condS} & $T_S$ at $S$; vertical side and $L_{120^\circ}$ side & lower delimiter $P_3$ & $d(\overrightarrow{SP_3},-90^\circ)=y_S-y_3\ge b$ & Included in \texttt{condSfarL}. \\
\texttt{condT} & $T_T$ at $T$; vertical side and $L_{-120^\circ}$ side & lower delimiter $P_3$ & $d(\overrightarrow{TP_3},-90^\circ)=y_T-y_3\ge b$ & Included in \texttt{condTfarR}. \\
\texttt{condSfarL} & $T_S$ at $S$ and $T_L$ at $L$ & $P_3$ for the wall guard; listed point for floor escape & $y_S-y_3\ge b$ and $d(\overrightarrow{LP_i},30^\circ)\ge b$ & Subcases 1.1a--c. \\
\texttt{condTfarR} & $T_T$ at $T$ and $T_R$ at $R$ & $P_3$ for the wall guard; listed point for floor escape & $y_T-y_3\ge b$ and $d(\overrightarrow{RP_i},150^\circ)\ge b$ & Subcases 1.1a--c. \\
\texttt{condV} & Case~2 placement $T_{V'}$ at $V$; sides $L_{180^\circ}$ and the $-150^\circ$ side through $V$ & reflected vertical delimiter $P_{-6}$ & $d(\overrightarrow{VP_{-6}},180^\circ)=x_V-x_{-6}\ge b$ plus \texttt{onlineV} & Subcases 2.2a--b and the ten raw-order 2.2 variants. \\
\bottomrule
\end{tabularx}
\end{table}

At row level, the supplementary file \path{r/forced_placement_audit.csv} contains one audit record for each forced predicate row in the $599$ certified models. The audit file has $77$ representative rows and $4658$ raw-order rows; the field \texttt{constraint\_index} identifies the corresponding JSON row, while \texttt{escape\_or\_guard\_row} records the exact affine inequality. As a row-level example, the audit record for representative model 1.1a, constraint~7, is the tuple
\[
(\text{representative},\ \texttt{1.1a},\ 7,\ \texttt{dot\_gt\_b},\ W,\ P_{3.5},\ \theta_{\rm short}=-75^\circ),
\]
and reconstructing the escape row $d(\overrightarrow{WP_{3.5}},-75^\circ)-b\ge0$ from this tuple gives the affine inequality
\[
\cos75^\circ\,(x_{3.5}-x_W)-\sin75^\circ\,(y_{3.5}-y_W)-b\ \ge\ 0,
\]
whose coefficient vector ($\cos75^\circ$ on $x_{3.5}$, $-\sin75^\circ$ on $y_{3.5}$, $-\cos75^\circ$ on $x_W$, $\sin75^\circ$ on $y_W$) and constant term $-b$ coincide with the corresponding JSON model row. The only generic \texttt{below\_line} row occurs in representative model 1.1b, constraint~7, where $P_{-2}$ is placed weakly below the line of direction $75^\circ$ through $W$. It is the closed version of the strict side-containment inequality printed in Table~\ref{tab:cert-bounds}; replacing $<$ by $\le$ enlarges the feasible set, so the certified lower bound remains valid for the original strict branch.

Thus, under the noncovering assumption, every canonical placement used below supplies its stated escape inequality by Lemmas~\ref{lem:canonical-tests} and~\ref{lem:forced-placements}, and Lemma~\ref{lem:row-audit} ties each certified model row to its forcing placement certificate.
\begin{figure}[htbp]
    \centering
    \includegraphics[width=0.95\textwidth]{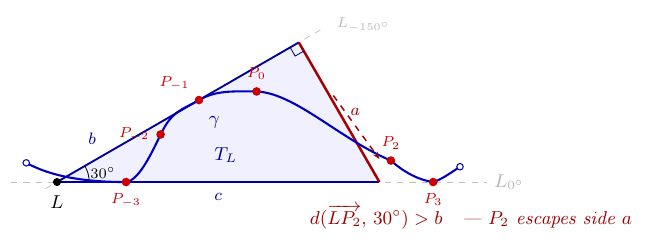}
    \caption[Escape predicate in action]{The escape predicate in action. The triangle $T_L$ is placed with its $30^{\circ}$ corner at anchor $L$, with side $c$ along the lower support $L_{0^{\circ}}$ and side $b$ along the oriented support $L_{-150^{\circ}}$ (line direction $30^{\circ}$). The short side has length $a$ and is the line $d(\protect\overrightarrow{LX},30^{\circ})=b$. The arc $\gamma$ enters the $30^{\circ}$ angle, touching $L_{0^{\circ}}$ at $P_{-3}$ and the $30^{\circ}$-direction side at its witness $P_{-1}$, with $P_{-2}$ strictly inside. The contact point $P_2$ satisfies $d(\protect\overrightarrow{LP_2},30^{\circ})>b$, so it lies beyond the short side of $T_L$.}
    \label{fig:orientations_setup}
\end{figure}
\section{Case analysis and numerical optimization data}\label{sec:cases}

Arguing by contradiction, assume that $T$ fails to cover a unit arc. By Lemma~\ref{lem:reduced-obstruction}, we may work with a weak support-reduced normalized obstruction $\gamma$ with $\ell(\gamma)\le 1$. Lemmas~\ref{lem:canonical-tests} and~\ref{lem:forced-placements} then turn each canonical test placement used below into the corresponding forced escape inequality.

\begin{lemma}[Ordered contact chains are lower bounds]\label{lem:ordered-chain}
Let $\gamma:[0,\ell]\to\mathbb R^2$ be a rectifiable curve, and let
\[
0\le t_1\le t_2\le\cdots\le t_m\le\ell.
\]
If $P_j=\gamma(t_j)$, then
\[
\sum_{j=1}^{m-1}|P_{j+1}-P_j|\le \ell(\gamma).
\]
Coincident parameters contribute zero-length segments.
\end{lemma}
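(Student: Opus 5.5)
The plan is to reduce the claim to the definition of arclength as a supremum of inscribed polygon lengths, combined with the ordinary triangle inequality. Recall that for a rectifiable curve $\gamma:[0,\ell]\to\R^2$ the length is
\[
\ell(\gamma)=\sup\Bigl\{\sum_{k=1}^{r}|\gamma(s_k)-\gamma(s_{k-1})| : 0=s_0\le s_1\le\cdots\le s_r=\ell\Bigr\},
\]
where the supremum runs over all finite weakly increasing partitions. The key observation is that the given chain $0\le t_1\le\cdots\le t_m\le\ell$ is itself such a partition once the endpoints are adjoined. Here $\ell$ is interpreted as the parameter length (for the constant-speed parametrizations on $[0,1]$ used in Lemma~\ref{lem:closed-order-selection} and Definition~\ref{def:weak-obstruction}, the same argument applies verbatim on $[0,1]$). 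I will augment it to $s_0=0$, $s_1=t_1,\ldots,s_m=t_m$, $s_{m+1}=\ell$.

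First, the sum over this augmented partition is at most $\ell(\gamma)$ by the definition of the supremum. Second, that sum equals
\[
|P_1-\gamma(0)|+\sum_{j=1}^{m-1}|P_{j+1}-P_j|+|\gamma(\ell)-P_m|,
\]
and the two extra terms are nonnegative. Dropping them yields the claimed inequality. Where consecutive parameters coincide, $t_j=t_{j+1}$ gives $P_j=P_{j+1}$, so that segment contributes zero. This handles the weak-order degeneracies allowed throughout the closed witness convention, and no strictness is required anywhere.

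The only point needing care, which I expect to be the mild obstacle, is that the curves to which the lemma is applied are weak obstructions. Their limiting curve need not be simple, and their parametrization is only a uniform limit of constant-speed polygonal arcs. Both facts are harmless, because the argument above uses only rectifiability and the partition definition; simplicity and the parametrization play no role. The bound $\ell(\gamma)\le1$ itself comes from lower semicontinuity of length, as recorded in Definition~\ref{def:weak-obstruction}. Since the lemma is stated for an arbitrary rectifiable curve, this step is only a remark on applicability and not part of the proof. The ordered-chain lower bound combined with $\ell(\gamma)\le1$ is then exactly what the SOCP models minimize.
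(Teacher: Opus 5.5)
Your argument is correct and is exactly the standard total-variation (partition-supremum) inequality the paper cites in its one-line proof; adjoining the endpoints $0$ and $\ell$ and dropping the two nonnegative end terms is just the detail the paper leaves implicit. As you note yourself, the remarks about weak obstructions concern only where the lemma is applied and are not part of its proof.
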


\begin{proof}
This is the standard total-variation inequality for a parameter-ordered polygonal subsequence of a rectifiable curve.
\end{proof}

Let $\mathcal I$ be the certified index set consisting of the twelve representative terminal models and the $587$ additional raw-order models for the nonrepresentative tail orders in Cases~2 and~3. For each $i\in\mathcal I$, let $\mathcal F_i^{\rm open}$ denote the feasible set with the strict escape inequalities used in the original notebook when such a notebook model exists, and let $\mathcal F_i^{\rm cl}$ denote the corresponding closed feasible set obtained by replacing strict escape inequalities by weak inequalities while keeping the same contact and ordering constraints. Let $\Pi_i$ denote the polygonal chain in the recorded contact order. We write
\[
L_i^{\rm cert}=\inf_{\mathcal F_i^{\rm cl}}\ell(\Pi_i).
\]
Since $\mathcal F_i^{\rm open}\subseteq \mathcal F_i^{\rm cl}$ for the original notebook models, a certified lower bound $L_i^{\rm cert}>1$ is valid for every strict realization. For the raw-order models, the closed SOCP is built directly from the same affine contact, ordering, and escape rows and from the raw polygonal chain in parameter order. For any obstruction realizing branch $i$, Lemma~\ref{lem:middle-skeleton}, Lemma~\ref{lem:ordered-chain}, and the raw-order enclosures in Lemmas~\ref{lem:case2-order-reduction} and~\ref{lem:case3-order-reduction} give $\ell(\Pi_i)\le \ell(\gamma)$.
We record explicitly which ordering information the certified models consume: the Case~1 models impose the inner-role middle chain of Proposition~\ref{prop:support-input}(2), which is available there because all four $L_{\pm150^\circ},L_{\pm120^\circ}$ roles are inner; the Case~2 and Case~3 models impose only the branch tail orders enumerated exhaustively in Lemmas~\ref{lem:case2-order-reduction} and~\ref{lem:case3-order-reduction}, never a parameter relation between a tail witness and $P_{\pm3}$ beyond the recorded chain order. No certified model encodes the full seven-term chain outside the all-inner branch.
Appendix~\ref{app:opt} records, for each representative terminal subcase, the active contacts, order constraints, escape inequalities, the chain $\Pi_i$, the numerical output, and the independent interval certificate for $\mathcal F_i^{\rm cl}$, together with the raw-order SOCP certificates for the nonrepresentative Case~2 orders and the exhaustive Case~3 pairings; Appendix~\ref{app:branch-grammar} records the branch grammar, chain-order ledger, and model-id routing in Table~\ref{tab:branch-model-ledger}. This section presents the branching geometry; the rounded reported values for all representative models, including the direct both-late model 1.1c, are collected in Table~\ref{tab:bounds}.

\subsection{Case tree structure}

By Lemmas~\ref{lem:order} and~\ref{lem:contact-alternatives}, the support lines $L_{\pm 150^{\circ}}$ and $L_{\pm 120^{\circ}}$ have active contact roles either on the middle subarc $\gamma_{P_{-3}P_3}$ or on the corresponding tail. On the positive side, $L_{150^{\circ}}$ has role $P_1$ or $P_8$, and $L_{120^{\circ}}$ has role $P_2$ or $P_7$. The pair $(P_8,P_2)$ is impossible: an inner $120^{\circ}$ contact forces the corresponding $150^{\circ}$ contact to be inner. Thus the one-sided positive roles are exactly
\[
\begin{array}{c|c|c}
\text{type} & L_{150^{\circ}} & L_{120^{\circ}}\\
\hline
A & P_1 & P_2\\
B & P_1 & P_7\\
C & P_8 & P_7 .
\end{array}
\]
The negative side has the reflected alternatives $(P_{-1},P_{-2})$, $(P_{-1},P_{-7})$, and $(P_{-8},P_{-7})$. Pairing the three positive and three negative alternatives gives nine raw role branches.

\begin{proposition}[Finite case-tree exhaustion]\label{prop:case-tree}
After applying Lemmas~\ref{lem:order}, \ref{lem:contact-alternatives}, \ref{lem:subcase11-tail-reduction}, \ref{lem:case2-order-reduction}, and~\ref{lem:case3-order-reduction}, every weak support-reduced normalized obstruction belongs to one of the certified closed models: the twelve representative terminal models in Table~\ref{tab:case-tree-exhaustion} or the $587$ additional raw-order models in Table~\ref{tab:raw-order-cert}.
\end{proposition}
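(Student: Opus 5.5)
The plan is to prove Proposition~\ref{prop:case-tree} as a routing argument: each weak support-reduced normalized obstruction $\gamma$ is sent to at least one certified closed model. First we split into role branches. By Lemmas~\ref{lem:order} and~\ref{lem:contact-alternatives}, the positive side of $\gamma$ has one of the types $A=(P_1,P_2)$, $B=(P_1,P_7)$, $C=(P_8,P_7)$. The negative side has the reflected type, so there are nine raw role pairs. The normalization $x(P_{-3})\le x(P_3)$ together with the reflection convention of Appendix~\ref{app:triangle-normalization} lets us identify each pair with its mirror image. That leaves three main branches:
\begin{itemize}
\item Case~1: both sides of type $A$ (all four roles inner);
\item Case~2: one side $A$ or $B$, the other side at least partly outer, namely $(A,B)$, $(B,B)$ and their mirrors;
\item Case~3: at least one side of type $C$.
\end{itemize}
I would write this identification out explicitly as a table, so that every one of the nine pairs lands in exactly one case up to reflection. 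This table is the first row of Table~\ref{tab:case-tree-exhaustion}.

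Within each case we refine by the finite data that fixes a model.

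\emph{Case~1.} Proposition~\ref{prop:support-input}(2) supplies the full inner chain. The canonical placements $T_L$, $T_R$, $T_S$, $T_T$ and $T_W$ are forced by Lemma~\ref{lem:forced-placements}. Each forced placement yields a closed escape row, and which witness satisfies that row is a finite dichotomy. This gives the terminal subcases, with Subcase~1.1 split further by Lemma~\ref{lem:subcase11-tail-reduction}:
\begin{itemize}
\item neither vertical delimiter late: model 1.1a, after deleting the kinks via Lemma~\ref{lem:wedge};
\item exactly one side late: model 1.1b or its reflection;
\item both sides late: model 1.1c.
\end{itemize}

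\emph{Case~2.} Lemma~\ref{lem:case2-order-reduction} lists the six right-tail weak orders and the two left-tail orders in Subcase~2.2. Two orders are representative models. The rest are raw-order models, and we account for them by counting against Table~\ref{tab:raw-order-cert}.

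\emph{Case~3.} Lemma~\ref{lem:case3-order-reduction} gives all $24\times24$ pairings: three representative (3a--3c) and $573$ auxiliary.

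The totals should check out: $12$ representative models, and $587=573+14$ raw-order models. The $14$ would be the Case~2 raw orders, namely the remaining right-tail orders across the Case~2 subcases plus the ten 2.2 variants. I would verify this arithmetic against the ledger in Table~\ref{tab:branch-model-ledger}.

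Next, feasibility. For each branch we must show that the witness tuple of $\gamma$ lies in $\mathcal F_i^{\rm cl}$, which has three parts.
\begin{itemize}
\item The contact rows hold because witnesses lie on their support faces, passed to the limit by Lemma~\ref{lem:closed-contact-order}.
\item The order rows hold by Lemma~\ref{lem:middle-skeleton} in Case~1, and by the enumerated tail orders in Cases~2 and~3. Ties are weak orders, which appear in the closure of at least one strict order, so they are covered.
\item The escape and guard rows hold by Lemmas~\ref{lem:canonical-tests} and~\ref{lem:forced-placements}, weakened to $\ge$. Lemma~\ref{lem:row-audit} ties each such row to its model row.
\end{itemize}

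The main obstacle is exhaustiveness of the sub-branching inside each case, in particular within Case~1. There, each forced escape inequality must be satisfiable only by the witnesses that some terminal model lists. Otherwise an obstruction could satisfy the escape row only through an unnamed point. To handle this I would use the second half of Lemma~\ref{lem:canonical-tests}: the escaping point can be taken on the exposed face in direction $\theta_{\rm short}$. Then, using the cyclic normal-fan order of Table~\ref{tab:cyclic-order}, I would identify that face with a named role, or with a segment between two adjacent named roles, so that the escape point is a named witness. Degenerate coincidences, such as $P_{3.5}=P_6$, are absorbed into the weak rows of the adjacent model.
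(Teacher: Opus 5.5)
There is a genuine gap in your first step: your top-level split of the nine raw role pairs does not match the certified model families, so the routing fails. You put only $A'A$ into Case~1, send $A'B$, $B'A$, $B'B$ to Case~2, and send every pair containing $C$ or $C'$ to Case~3. That is, you separate $A$ from $B$ (a distinction about the $120^\circ$ role) and lump together all pairs with any outer $150^\circ$ witness. The certified families are instead indexed by how many of the $\pm150^\circ$ witnesses are outer.

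This breaks feasibility in two places.
\begin{itemize}
\item The 2.x models contain the row \texttt{onlineR[p[8]]} and right-tail chain orders $P_3\cdots P_8$. They presuppose that the $L_{150^\circ}$ witness is the outer tail contact $P_8$. On a type-$B$ side that witness is the inner $P_1$ on $\gamma_{P_0P_3}$. So Lemma~\ref{lem:case2-order-reduction} has nothing to enumerate there. Putting $P_1$ into the $P_8$ slot generally violates the parameter order that Lemma~\ref{lem:ordered-chain} requires.
\item The $C'C$ grid models contain \texttt{onlineL[p[-8]]} and left-tail orders of $P_{-4},P_{-5},P_{-7},P_{-8}$. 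An $A'C$ or $B'C$ obstruction does not realize these, because its $L_{-150^\circ}$ witness is $P_{-1}$ on the middle chain.
\end{itemize}
Your step ``contact rows hold because witnesses lie on their support faces'' therefore fails for exactly these branches. Models 1.2 and 1.3 are also never used: your routing touches only ten representative models, although you count twelve.

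The fix is to split on the $\pm150^\circ$ contacts, as the paper does.
\begin{itemize}
\item Both inner: $A'A$ is Subcase~1.1, split by Lemma~\ref{lem:subcase11-tail-reduction} into 1.1a--c. $A'B\sim B'A$ gives one model (1.2) and $B'B$ gives 1.3. That is five models.
\item Exactly one outer, reflected so the outer side is on the right: $A'C$ is Subcase~2.1 with the six right-tail orders of Lemma~\ref{lem:case2-order-reduction}. $B'C$ is Subcase~2.2 with $2\times 6=12$ left--right orders. That is eighteen models.
\item Both outer: $C'C$ gives $24\times24=576$ models.
\end{itemize}
This yields $5+18+576=599$. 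The raw Case~2 models number $4+10=14$; the ten 2.2 variants already include the four nonrepresentative right orders paired with the representative left order, so those should not be added again. Case~3 contributes the remaining $573$. With this partition, the rest of your plan matches the paper: the Subcase~1.1 routing, feasibility via Lemma~\ref{lem:closed-contact-order} and the forced escape rows, and the exposed-face choice of escaping witness from Lemma~\ref{lem:canonical-tests}.
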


\begin{proof}
On the positive side the allowed role types are
\[
A=(P_1,P_2),\qquad B=(P_1,P_7),\qquad C=(P_8,P_7),
\]
where the ordered pair records the active $(150^\circ,120^\circ)$ roles. On the negative side the reflected types are
\[
A'=(P_{-1},P_{-2}),\qquad B'=(P_{-1},P_{-7}),\qquad C'=(P_{-8},P_{-7}).
\]
Thus there are $3\cdot 3=9$ raw left-right combinations. The complete grammar, including the excluded mixed alternatives and the raw-order certificate routing, is recorded in Appendix~\ref{app:branch-grammar}.

If both $150^\circ$ contacts are inner, the raw combinations are $A'A$, $A'B$, $B'A$, and $B'B$. The two mixed combinations are reflected copies of each other, so they give one numerical model. The $A'A$ branch is Subcase~1.1 and splits into the three $T_W$ alternatives 1.1a, 1.1b, and the direct both-late model 1.1c. The mixed branch is Subcase~1.2, and $B'B$ is Subcase~1.3.

If exactly one $150^\circ$ contact is outer, reflection lets us take the representative with left type $A'$ or $B'$ and right type $C$. These give Subcases~2.1 and~2.2. By Lemma~\ref{lem:case2-order-reduction}, the right tail has one of the six weak orders
\[
P_3P_4P_7P_8,\quad
P_3P_8P_7P_4,\quad
P_3P_4P_8P_7,\quad
P_3P_7P_4P_8,\quad
P_3P_7P_8P_4,\quad
P_3P_8P_4P_7.
\]
In Subcase~2.1 this gives six certified right-tail models. In Subcase~2.2 the reflected left tail has the two weak orders
\[
P_{-7}P_{-6}P_{-3}
\qquad\text{and}\qquad
P_{-6}P_{-7}P_{-3},
\]
so Subcase~2.2 gives twelve certified left-right order models. The four representative models 2.1a, 2.1b, 2.2a, and~2.2b are the two displayed right-tail orders with the representative left order; the remaining Case~2 order models are certified directly in Table~\ref{tab:raw-order-cert}.

If both $150^\circ$ contacts are outer, the raw type is $C'C$. By Lemma~\ref{lem:case3-order-reduction}, the right tail has a weak order represented by one of the $24$ strict orders of $P_4,P_5,P_7,P_8$, and the left tail is represented by one of the $24$ reflected strict orders. Pairing them gives $24\times24=576$ certified Case~3 order models. The three representative pairings 3a, 3b, and~3c are listed in Table~\ref{tab:case-tree-exhaustion}; the remaining $573$ raw-order pairings are certified directly in Table~\ref{tab:raw-order-cert}. Together with the five Case~1 models and eighteen Case~2 models, this exhausts all branch possibilities recorded by the support-contact grammar.
\end{proof}

\begin{table}[t]
\centering
\caption{Representative terminal models in the case tree. The nonrepresentative Case~2 and Case~3 raw orders are certified separately in Table~\ref{tab:raw-order-cert}.}\label{tab:case-tree-exhaustion}
\scriptsize
\setlength{\tabcolsep}{4pt}
\renewcommand{\arraystretch}{1.12}
\begin{tabular}{@{}lll@{}}
\toprule
\textbf{Main branch} & \textbf{Raw role data} & \textbf{Terminal subcases} \\
\midrule
Case 1 & $A'A$, $A'B\sim B'A$, $B'B$ & 1.1a, 1.1b, 1.1c, 1.2, 1.3 \\
Case 2 & $A'C$, $B'C$ up to reflection & 2.1a, 2.1b, 2.2a, 2.2b \\
Case 3 & $C'C$ with all tail orders certified & 3a, 3b, 3c \\
\bottomrule
\end{tabular}
\end{table}
\begin{figure}[htbp]
    \centering
    \includegraphics[width=0.95\textwidth]{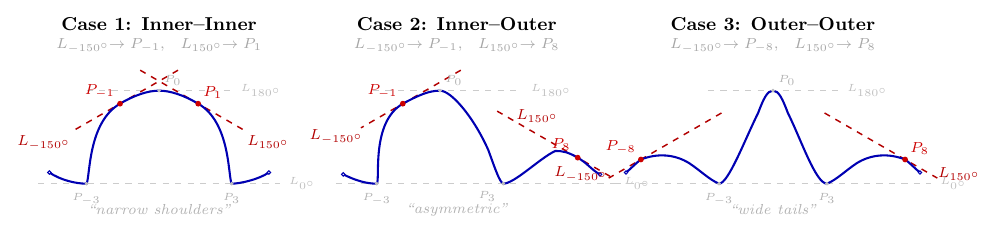}
    \caption[Three main case geometries]{The three main cases, determined by where $L_{\pm 150^{\circ}}$ touches the arc~$\gamma$. Each support line $L_{\pm 150^{\circ}}$ touches~$\gamma$ at either an \emph{inner} contact ($P_{\pm 1}$, on the middle segment~$\gamma_{P_{-3}P_3}$) or an \emph{outer} contact ($P_{\pm 8}$, on a tail). Case~1 (both inner) produces ``narrow shoulders''; Case~2 (one inner, one outer) is asymmetric; Case~3 (both outer) produces ``wide tails.'' The dashed lines show the floor $L_{0^{\circ}}$, ceiling $L_{180^{\circ}}$, and the two support lines $L_{\pm 150^{\circ}}$ through the respective contact points.}
    \label{fig:cases_geometry}
\end{figure}
Figure~\ref{fig:casetree} shows the complete case tree.

\begin{figure}[t]
\centering
\includegraphics[width=0.95\textwidth]{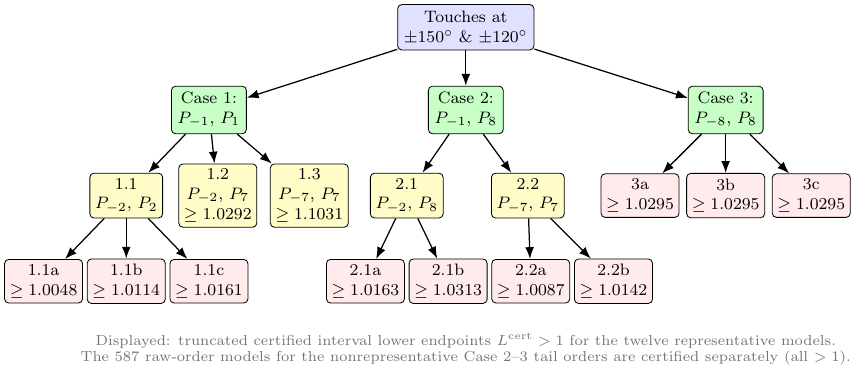}
\caption[Case tree for the finite analysis]{Case tree for the finite analysis: three main branches determined by touches at $\pm 150^{\circ}$ and $\pm 120^{\circ}$, with subcases branching on secondary touch alternatives, including the direct both-late model 1.1c. The displayed values are the truncated certified interval lower endpoints of Table~\ref{tab:interval-cert}; the $587$ nonrepresentative raw-order models of Cases~2--3 are certified separately (Table~\ref{tab:raw-order-cert}).}
\label{fig:casetree}
\end{figure}

\subsection{Case 1: Touches at \texorpdfstring{$P_{-1}$ and $P_1$}{P-1 and P1}}

In this case $L_{-150^{\circ}}$ touches at $P_{-1}$ and $L_{150^{\circ}}$ at $P_1$.

\medskip\noindent\textbf{Subcase 1.1: Touches at $P_{-2}$ and $P_2$.}
Place $T$ in orientations $T_L$ and $T_R$ with escape constraints
\[
d(\overrightarrow{LP_2}, 30^{\circ}) > b, \quad d(\overrightarrow{RP_{-2}}, 150^{\circ}) > b.
\]
Additionally, orientation $T_W$ with corner on $L_{0^{\circ}}$ provides further constraints. Lemma~\ref{lem:subcase11-tail-reduction} reduces the remaining tail-contact alternatives to the three certified branches below. Appendix~\ref{app:summary} lists the full feasible sets and the exact lower-bound chains for them.

\begin{itemize}
\item \textbf{Subcase 1.1a:} Both sides of the $30^{\circ}$ corner of $T_W$ touch $\gamma_{P_{-3}P_3}$. After applying Lemma~\ref{lem:subcase11-tail-reduction}, the notebook lower-bound chain is
\[
\begin{gathered}
P_{-6}\to P_{-3.5}\to P_{-3}\to P_{-2.5}\to P_{-2}\to P_{-1}\\
\to P_1\to P_2\to P_{2.5}\to P_3\to P_{3.5}\to P_6.
\end{gathered}
\]
\begin{figure}[t]
    \centering
    \begin{subfigure}[b]{0.48\textwidth}
        \centering
        \includegraphics[width=\textwidth]{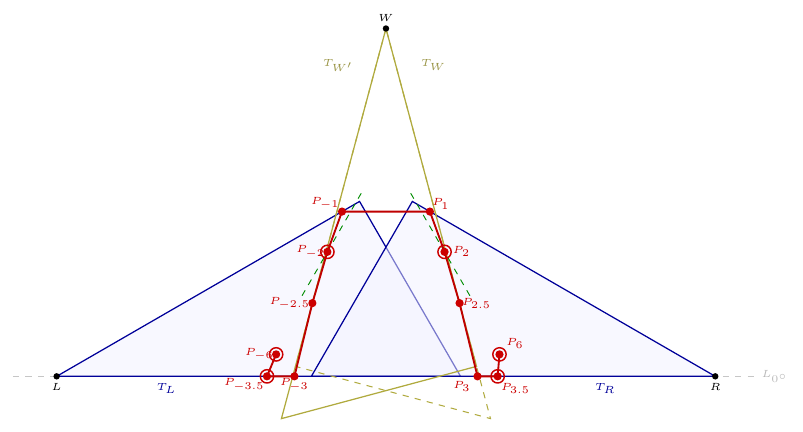}
        \caption[Subcase 1.1a]{Subcase 1.1a: both $T_W$ side contacts ($P_{\pm2.5}$) on the middle intervals; escape witnesses $P_{\pm3.5}$ at the delimiters.}
        \label{fig:subcase_1_1a}
    \end{subfigure}
    \hfill
    \begin{subfigure}[b]{0.48\textwidth}
        \centering
        \includegraphics[width=\textwidth]{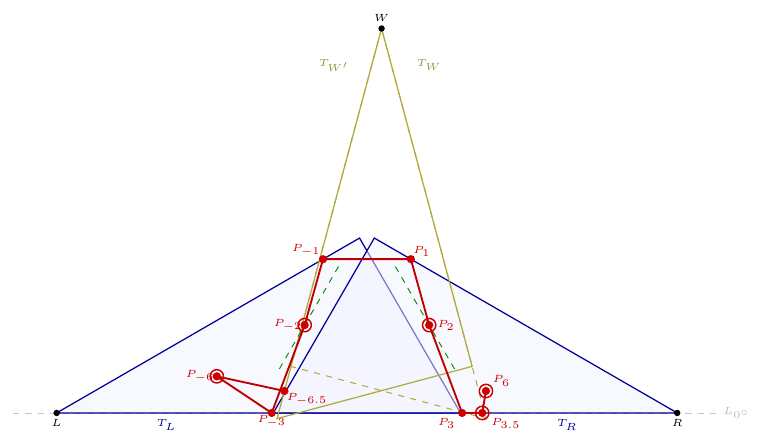}
        \caption[Subcase 1.1b]{Subcase 1.1b: the left contact $P_{-6.5}$ is late (one-sided model); the right escape row is kept at $P_{3.5}$.}
        \label{fig:subcase_1_1b}
    \end{subfigure}

    \smallskip
    \begin{subfigure}[b]{0.48\textwidth}
        \centering
        \includegraphics[width=\textwidth]{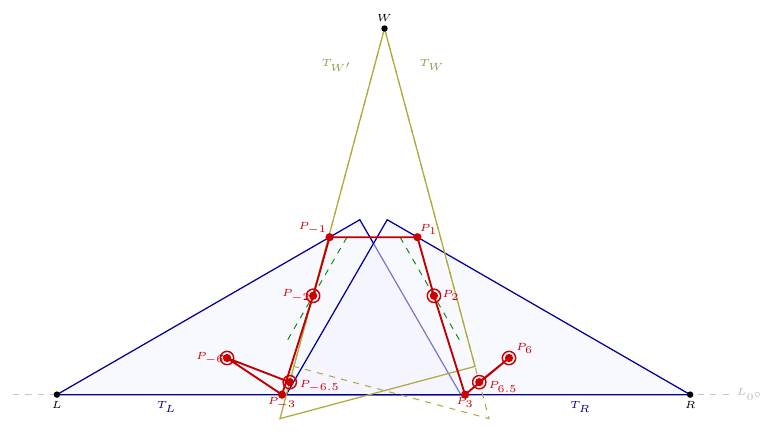}
        \caption[Subcase 1.1c]{Subcase 1.1c: both contacts late ($P_{-6.5}$ and $P_{6.5}$ retained); both late escape rows enforced directly.}
        \label{fig:subcase_1_1c}
    \end{subfigure}

    \caption[Feasible configurations for Subcase 1.1]{Feasible configurations of the three certified closed models for Subcase~1.1 (model ids \texttt{1.1a}--\texttt{1.1c}). Blue outlines denote the floor placements $T_L$ and $T_R$, forced by the lower support together with the inner $\mp150^\circ$ witnesses $P_{\mp1}$; the solid and dashed gold outlines denote the wedge placements $T_W$ and $T_{W'}$, sharing the corner $W$ on the two steep supports; green dashed segments indicate the $T_S$ and $T_T$ hypotenuse incidences through $P_{\pm2}$. The red polyline is the certified lower-bound chain in its recorded parameter order, so its length is at most $\ell(\gamma)$ by Lemma~\ref{lem:ordered-chain}. Every incidence row of the model holds exactly (points on the drawn sides), and every escape or guard row holds strictly at the ringed points, each of which lies beyond the short side of its forcing placement. Since the certified minimum of each closed model exceeds $1$ (Table~\ref{tab:interval-cert}), no such configuration can come from an arc of length one---the contradiction driving Theorem~\ref{thm:main}. The certified optimum is a degenerate limit of such configurations in which several contacts coincide; the exact chains and constraints are listed in Appendix~\ref{app:summary}.}
    \label{fig:subcase_1_1}
\end{figure}
\item \textbf{Subcase 1.1b:} Only one side of the $30^{\circ}$ corner touches $\gamma_{P_{-3}P_3}$. The corresponding notebook lower-bound chain is
\[
\begin{gathered}
P_{-6.5}\to P_{-6}\to P_{-3}\to P_{-2}\to P_{-1}\\
\to P_1\to P_2\to P_3\to P_{3.5}\to P_6.
\end{gathered}
\]
\item \textbf{Subcase 1.1c:} Both $T_W$ side contacts are late, so neither nearest vertical delimiter is substituted for the escaping contact. The direct both-late SOCP chain is
\[
\begin{gathered}
P_{-6.5}\to P_{-6}\to P_{-3}\to P_{-2}\to P_{-1}\\
\to P_1\to P_2\to P_3\to P_6\to P_{6.5},
\end{gathered}
\]
with reconstructed SOCP value $1.016\ldots$ and independent interval lower endpoint $1.0161871592\ldots$; see Table~\ref{tab:interval-cert}. This model has no notebook counterpart.
\end{itemize}

\medskip\noindent\textbf{Subcase 1.2: Touches at $P_{-2}$ and $P_7$.}
The escape constraints are $d(\overrightarrow{LP_7}, 30^{\circ}) > b$ and $d(\overrightarrow{RP_{-2}}, 150^{\circ}) > b$.
\begin{figure}[t]
    \centering
    \includegraphics[width=0.62\textwidth]{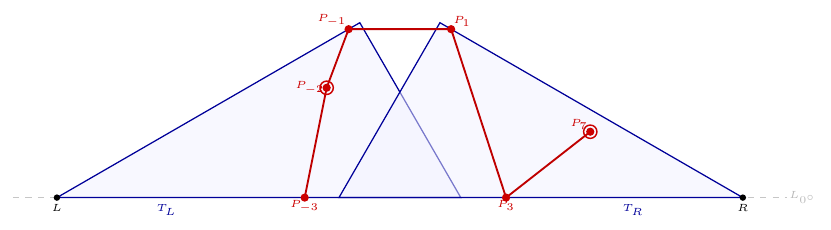}
    \caption[Feasible configuration for Subcase 1.2]{A feasible configuration of the certified closed model for Subcase~1.2 (model id \texttt{1.2}). The blue outlines show the active floor placements $T_L$ and $T_R$, forced by the lower support together with the incidences \texttt{onlineL} at $P_{-1}$ and \texttt{onlineR} at $P_1$ (exact). The ringed points carry the strict escape rows: $P_{-2}$ lies beyond the short side of $T_R$ (\texttt{farR}) and the outer $120^\circ$ witness $P_7$ beyond the short side of $T_L$ (\texttt{farL}). The red chain $P_{-3}P_{-2}P_{-1}P_1P_3P_7$ is the certified lower-bound chain of Appendix~\ref{app:summary}: it is parameter-ordered, so its length is at most $\ell(\gamma)$, while the certified model minimum exceeds $1$.}
    \label{fig:subcase_1_2}
\end{figure}
\medskip\noindent\textbf{Subcase 1.3: Touches at $P_{-7}$ and $P_7$.}
The escape constraints are symmetric on the two sides.
\begin{figure}[t]
    \centering
    \includegraphics[width=0.62\textwidth]{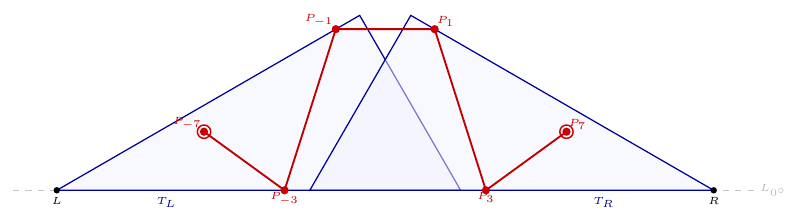}
    \caption[Feasible configuration for Subcase 1.3]{A feasible configuration of the certified closed model for Subcase~1.3 (model id \texttt{1.3}). Both $\pm150^\circ$ roles are inner ($P_{\mp1}$, incidences \texttt{onlineL}, \texttt{onlineR} exact), while both $120^\circ$ roles are outer: the ringed tail contacts $P_{\mp7}$ satisfy the strict escape rows \texttt{farR} and \texttt{farL}, lying beyond the short sides of $T_R$ and $T_L$ respectively. The red chain $P_{-7}P_{-3}P_{-1}P_1P_3P_7$ is the certified parameter-ordered lower-bound chain of Appendix~\ref{app:summary}, of length at most $\ell(\gamma)$ but certified greater than $1$.}
    \label{fig:subcase_1_3}
\end{figure}
\subsection{Case 2: Touches at \texorpdfstring{$P_{-1}$ and $P_8$}{P-1 and P8}}

In this case $L_{-150^{\circ}}$ touches at $P_{-1}$ and $L_{150^{\circ}}$ at $P_8$.
Since the right $150^{\circ}$ contact is outer, the one-sided branch rule forces the right $120^{\circ}$ contact to be $P_7$. This is why the right escape inequalities in Case~2 are imposed at $P_7$, although the main $150^{\circ}$ branch is labeled by $P_8$.

\medskip\noindent\textbf{Subcase 2.1: Touches at $P_{-2}$ and $P_8$.}
In orientation $T_{V'}$ the upper side coincides with $L_{180^{\circ}}$ touching $P_0$. On the right end, Lemma~\ref{lem:case2-order-reduction} gives six admissible weak orders of the points $P_4,P_7,P_8$. The two representative orders used in the original notebook models are
\[
P_3P_4P_7P_8
\qquad\text{and}\qquad
P_3P_8P_7P_4.
\]
The four nonrepresentative right-tail orders are certified directly as raw-order SOCP models in Appendix~\ref{app:raw-order-cert}. The two representative branches are displayed below.
\begin{figure}[t]
    \centering
    \begin{subfigure}[b]{0.47\textwidth}
        \centering
        \includegraphics[width=\textwidth]{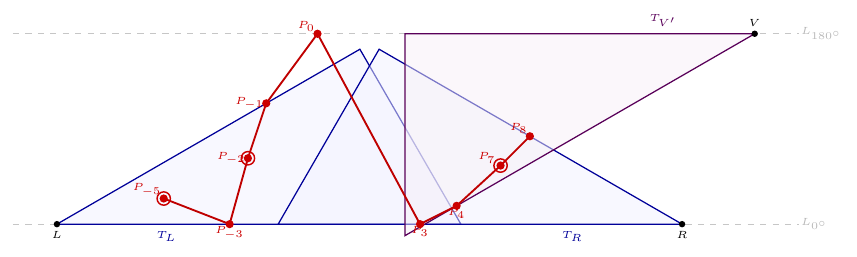}
        \caption[Subcase 2.1a]{Subcase 2.1a. Schematic right-end order $P_3P_4P_7P_8$.}
        \label{fig:subcase_2_1a}
    \end{subfigure}
    \hfill
    \begin{subfigure}[b]{0.47\textwidth}
        \centering
        \includegraphics[width=\textwidth]{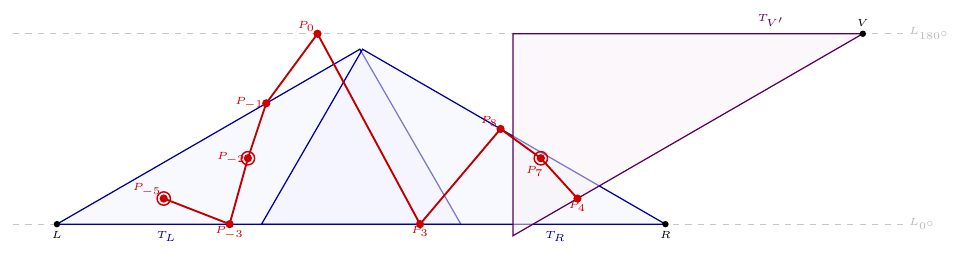}
        \caption[Subcase 2.1b]{Subcase 2.1b. Schematic right-end order $P_3P_8P_7P_4$.}
        \label{fig:subcase_2_1b}
    \end{subfigure}
    
    \caption[Feasible configurations for Subcase 2.1]{Feasible configurations of the certified closed models for Subcase~2.1 (model ids \texttt{2.1a}, \texttt{2.1b}). Blue outlines denote the floor placements $T_L$ and $T_R$ (incidences \texttt{onlineL} at $P_{-1}$ and \texttt{onlineR} at the outer $150^\circ$ witness $P_8$, exact), the purple outline denotes the ceiling placement $T_{V'}$ (anchor $V$, incidence \texttt{onlineV} placing $P_4$ on its slanted side), and dashed gray lines mark the horizontal supports. The ringed points carry the strict escape rows: $P_{-2}$ beyond the short side of $T_R$ (\texttt{farR}), the forced $120^\circ$ witness $P_7$ beyond the short side of $T_L$ (\texttt{farL}), and $P_{-5}$ beyond the short side of $T_{V'}$ (\texttt{farV}). The red polylines are the certified parameter-ordered chains in the two representative right-tail orders, of length at most $\ell(\gamma)$ but certified greater than $1$; the four nonrepresentative orders are certified separately (Table~\ref{tab:raw-order-cert}).}
    \label{fig:subcase_2_1}
\end{figure}
\begin{itemize}
\item \textbf{Subcase 2.1a:} right order $P_3P_4P_7P_8$.
\item \textbf{Subcase 2.1b:} right order $P_3P_8P_7P_4$.
\end{itemize}

\medskip\noindent\textbf{Subcase 2.2: Touches at $P_{-7}$ and $P_7$.}
The representative left order is $P_{-7}P_{-6}P_{-3}$; the reflected raw left order $P_{-6}P_{-7}P_{-3}$ is certified directly in Appendix~\ref{app:raw-order-cert}. On the right end, the two representative orders are again
\[
P_3P_4P_7P_8
\qquad\text{and}\qquad
P_3P_8P_7P_4.
\]
The remaining ten Case~2.2 left-right raw-order combinations are included in the raw-order SOCP certificate.

\begin{itemize}
\item \textbf{Subcase 2.2a:} right order $P_3P_4P_7P_8$.
\item \textbf{Subcase 2.2b:} right order $P_3P_8P_7P_4$.
\end{itemize}
\begin{figure}[t]
    \centering
    \begin{subfigure}[b]{0.47\textwidth}
        \centering
        \includegraphics[width=\textwidth]{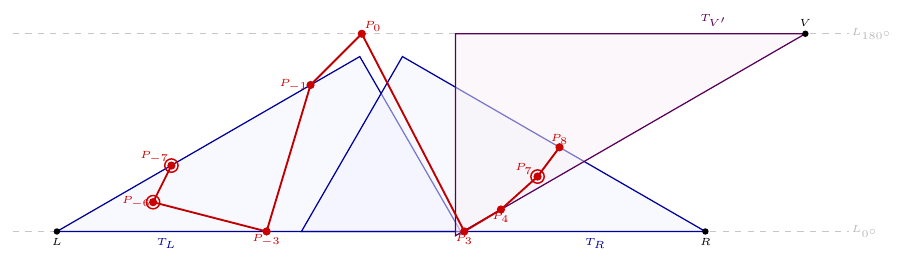}
        \caption[Subcase 2.2a]{Subcase 2.2a. Schematic left order $P_{-7}P_{-6}P_{-3}$ and right order $P_3P_4P_7P_8$.}
        \label{fig:subcase_2_2a}
    \end{subfigure}
    \hfill
    \begin{subfigure}[b]{0.47\textwidth}
        \centering
        \includegraphics[width=\textwidth]{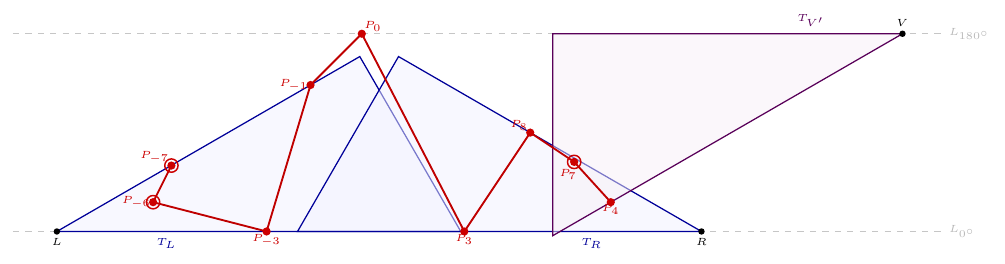}
        \caption[Subcase 2.2b]{Subcase 2.2b. Schematic left order $P_{-7}P_{-6}P_{-3}$ and right order $P_3P_8P_7P_4$.}
        \label{fig:subcase_2_2b}
    \end{subfigure}
    
    \caption[Feasible configurations for Subcase 2.2]{Feasible configurations of the certified closed models for Subcase~2.2 (model ids \texttt{2.2a}, \texttt{2.2b}). The purple triangle is the active $T_{V'}$ placement, the blue triangles are the floor placements $T_L$ and $T_R$, and the labeled contacts make the representative left order $P_{-7}P_{-6}P_{-3}$ and the two representative right-end orders visually explicit. The ringed points carry the strict escape and guard rows: the outer witnesses $P_{-7}$ and $P_7$ lie beyond the short sides of $T_R$ and $T_L$ (\texttt{farR}, \texttt{farL}), and the vertical delimiter $P_{-6}$ witnesses the Case~2 guard $x_V-x_{-6}\ge b$ of the compound predicate \texttt{condV} (whose incidence part places $P_4$ on the slanted side of $T_{V'}$, exactly). The red polylines are the certified parameter-ordered chains of Appendix~\ref{app:summary}, of length at most $\ell(\gamma)$ but certified greater than $1$; the remaining ten left-right order combinations are certified separately (Table~\ref{tab:raw-order-cert}).}
    \label{fig:subcase_2_2}
\end{figure}
\subsection{Case 3: Touches at \texorpdfstring{$P_{-8}$ and $P_8$}{P-8 and P8}}

In this case both $L_{-150^{\circ}}$ and $L_{150^{\circ}}$ touch at the outermost positions. The one-sided branch rule then forces the $120^{\circ}$ contacts to be $P_{-7}$ and $P_7$. On the right end, Lemma~\ref{lem:case3-order-reduction} encloses the weak order of $P_4,P_5,P_7,P_8$ by the $24$ strict orders of these four labels, and the left end is treated analogously. The original notebook records the three representative pairings
\[
\begin{array}{ll}
\text{3a:} & P_{-8}P_{-7}P_{-5}P_{-4}\quad\text{and}\quad P_4P_5P_7P_8,\\
\text{3b:} & P_{-8}P_{-7}P_{-5}P_{-4}\quad\text{and}\quad P_8P_7P_5P_4,\\
\text{3c:} & P_{-4}P_{-5}P_{-7}P_{-8}\quad\text{and}\quad P_8P_7P_5P_4.
\end{array}
\]
The remaining $573$ Case~3 left-right order pairings are generated as closed SOCP models and certified directly in Appendix~\ref{app:raw-order-cert}.

\begin{figure}[p]
    \centering
    \begin{subfigure}[b]{0.82\textwidth}
        \centering
        \includegraphics[width=0.94\textwidth]{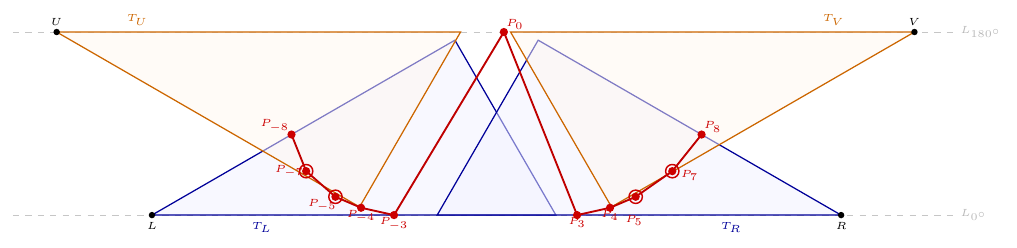}
        \caption{Subcase 3a: schematic chain $P_{-8}P_{-7}P_{-5}P_{-4}P_{-3}P_0P_3P_4P_5P_7P_8$.}
        \label{fig:subcase_3a}
    \end{subfigure}
    
    \smallskip
    \begin{subfigure}[b]{0.82\textwidth}
        \centering
        \includegraphics[width=0.94\textwidth]{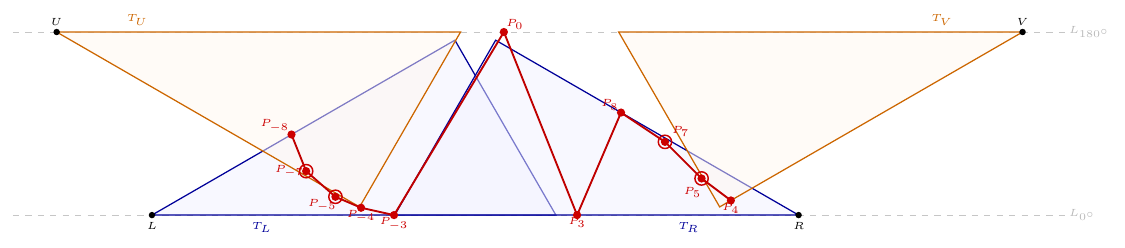}
        \caption{Subcase 3b: schematic chain $P_{-8}P_{-7}P_{-5}P_{-4}P_{-3}P_0P_3P_8P_7P_5P_4$.}
        \label{fig:subcase_3b}
    \end{subfigure}
    
    \smallskip
    \begin{subfigure}[b]{0.82\textwidth}
        \centering
        \includegraphics[width=0.94\textwidth]{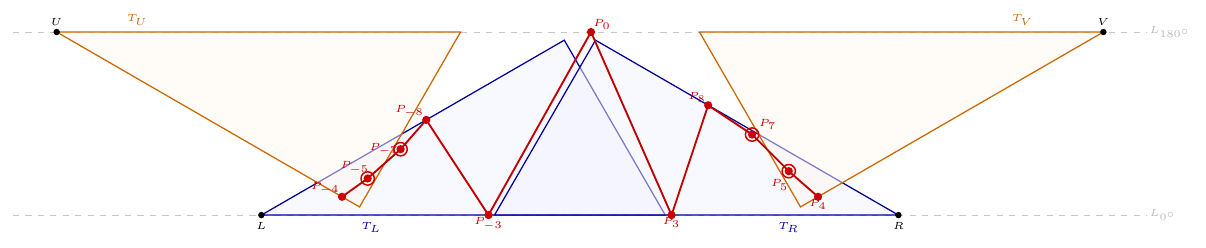}
        \caption{Subcase 3c: schematic chain $P_{-4}P_{-5}P_{-7}P_{-8}P_{-3}P_0P_3P_8P_7P_5P_4$.}
        \label{fig:subcase_3c}
    \end{subfigure}
    
    \caption[Feasible configurations for Case 3]{Feasible configurations of the three representative certified models for Case~3 (model ids \texttt{3a}--\texttt{3c}). Blue outlines denote the floor placements $T_L$ and $T_R$ (incidences \texttt{onlineL}, \texttt{onlineR} placing the outer $150^\circ$ witnesses $P_{-8}$ and $P_8$ on their slanted sides, exactly), orange outlines denote the ceiling placements $T_U$ and $T_V$ (incidences \texttt{onlineU}, \texttt{onlineV} at $P_{-4}$ and $P_4$), and dashed gray lines indicate the horizontal supports. The ringed points carry the strict escape rows: the $120^\circ$ witnesses $P_{\mp7}$ lie beyond the short sides of $T_R$ and $T_L$ (\texttt{farR}, \texttt{farL}), and the tail witnesses $P_{\mp5}$ beyond the short sides of $T_V$ and $T_U$ (\texttt{farV}, \texttt{farU}). The labeled red chains are the certified parameter-ordered chains of Appendix~\ref{app:summary} in the three representative tail-order pairings, each of length at most $\ell(\gamma)$ but certified greater than $1$; the other $573$ pairings are certified separately (Table~\ref{tab:raw-order-cert}).}
    \label{fig:case_3}
\end{figure}
\subsection{Summary of reported numerical values}

Table~\ref{tab:bounds} gives a condensed main-text summary of the representative terminal subcases. Appendix~\ref{app:summary} records the full subcase data and the saved numerical outputs. We write $L_i^{\mathrm{nb}}$ for the saved reported value associated with representative subcase $i$.

\begin{table}[t]
\centering
\caption{Rounded reported values by subcase (condensed summary; all saved reported values are $>1$).}
\label{tab:bounds}
\small
\begin{tabular}{@{}lcc@{}}
\toprule
\textbf{Subcase} & \textbf{Key constraints} & \textbf{Rounded $L_i^{\mathrm{nb}}$} \\
\midrule
1.1a & $P_{\pm 2}$ touched; $T_W$ two-side; $T_L/T_R$ escapes & $1.0048$ \\
1.1b & $P_{\pm 2}$ touched; $T_W$ one-side; $T_L/T_R$ escapes & $1.011$ \\
1.1c & $P_{\pm 2}$ touched; both $T_W$ contacts late; $T_L/T_R$ escapes & $1.016$ \\
1.2 & $P_{-2}, P_7$ touched; $T_L/T_R$ escapes & $1.029$ \\
1.3 & $P_{-7}, P_7$ touched; symmetric escapes & $1.10$ \\
2.1a & $P_{-2}, P_8$ touched; right order $P_3 P_4 P_7 P_8$ & $1.016$ \\
2.1b & $P_{-2}, P_8$ touched; right order $P_3 P_8 P_7 P_4$ & $1.031$ \\
2.2a & $P_{-7}, P_7$ touched; right order $P_3 P_4 P_7 P_8$ & $1.0087$ \\
2.2b & $P_{-7}, P_7$ touched; right order $P_3 P_8 P_7 P_4$ & $1.014$ \\
3a, 3b, 3c & $P_{-8}, P_8$ touched; reduced orders both ends & $1.029$ \\
\bottomrule
\end{tabular}
\end{table}

The smallest rounded reported value in Table~\ref{tab:bounds} occurs in Subcase~1.1a. Appendix~\ref{app:summary} records the corresponding numerical output
\[
L_{1.1a}^{\mathrm{nb}}=1.0048313701863976\ldots > 1.
\]
The refined representative interval certificate in Appendix~\ref{app:interval-cert} proves
\[
\min_{\rm rep} L_i^{\rm cert} \ge 1.0048290490\ldots > 1.0048.
\]
The additional raw-order interval certificate in Appendix~\ref{app:raw-order-cert} proves that every nonrepresentative Case~2 and Case~3 raw-order model also has certified lower bound greater than $1.0048$; its weakest refined coarse-interval lower endpoint is
\[
1.0057165531\ldots>1.0048.
\]

\begin{proof}[Proof of Theorem~\ref{thm:main}]
Assume, for contradiction, that a unit arc is not contained in any congruent copy of $T$. By Lemma~\ref{lem:reduced-obstruction}, there is a weak support-reduced normalized obstruction $\gamma$ with $\ell(\gamma)\le1$. Proposition~\ref{prop:case-tree}, using the support-contact and raw-order lemmas above, places $\gamma$ in one of the $599$ certified closed SOCP models. The escape and guard rows of that model are forced by noncoverage through Lemmas~\ref{lem:canonical-tests} and~\ref{lem:forced-placements}, and Lemma~\ref{lem:row-audit} certifies that the machine-checked model rows coincide with these forced rows. For that model, the recorded contact chain $\Pi_i$ occurs in parameter order on $\gamma$, so Lemma~\ref{lem:ordered-chain} gives
\[
\ell(\Pi_i)\le \ell(\gamma)\le1.
\]
On the other hand, Appendix~\ref{app:interval-cert} and Appendix~\ref{app:raw-order-cert} certify $L_i^{\rm cert}>1$ for every representative and raw-order model, and by definition $L_i^{\rm cert}$ is a lower bound for $\ell(\Pi_i)$ over the corresponding closed feasible set. Hence $\ell(\Pi_i)>1$, a contradiction. Therefore every unit arc is contained in some congruent copy of $T$.
\end{proof}

\section{Certified computational scaling margin}\label{sec:quant}

\begin{proposition}[Scale invariance]\label{prop:scale}
Let $s > 0$ and scale the entire configuration by homothety $X \mapsto sX$. Then:
\begin{enumerate}
\item Support-line contacts are preserved with the same angles.
\item Each escape inequality $d(\overrightarrow{AX}, \theta_{\mathrm{short}}) > b$ transforms to $d(\overrightarrow{s A\, sX}, \theta_{\mathrm{short}}) > s b$.
\item Polygonal lengths scale by $s$.
\end{enumerate}
Consequently, each branch lower bound scales linearly: $L_i \mapsto s L_i$.
\end{proposition}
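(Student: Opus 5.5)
The plan is to check each item directly from the definitions in Section~\ref{subsec:support-distance}, using only that $h:X\mapsto sX$ with $s>0$ is linear and orientation-preserving. Then I will read the consequence off the structure of the closed models $\mathcal F_i^{\rm cl}$, with the short-side offset $b$ replaced by $sb$, as for the scaled triangle $sT$.

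For item~1, the support offset satisfies $h_\theta(s\gamma)=\max_{X\in\gamma}\langle sX,n_\theta\rangle=s\,h_\theta(\gamma)$, because $s>0$. Hence $L_\theta(s\gamma)=s\,L_\theta(\gamma)$ with the same orientation and the same normal $n_\theta$. A point $X$ lies on $L_\theta(\gamma)$ exactly when $sX$ lies on $L_\theta(s\gamma)$. So contacts, witness parameters, the $\Lambda$-configuration, all special-angle roles, and every weak parameter-order relation are unchanged. Since $\conv(s\gamma)=s\conv(\gamma)$, the exposed faces and the cyclic normal-fan order are also unchanged. Anchors are intersections of two support lines, so they map to $sA$; the canonical placements of $T$ map to placements of $sT$ with the same orientation labels.

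For item~2, linearity of $d(\cdot,\theta)$ gives $d(\overrightarrow{sA\,sX},\theta)=\langle sX-sA,e_\theta\rangle=s\,d(\overrightarrow{AX},\theta)$. So the escape row $d\ge b$ holds at the original points exactly when $d\ge sb$ holds at the scaled points, and the same holds for strict inequalities. The same computation handles the guard rows, such as $x_6-x_L\ge b$ and $y_S-y_3\ge b$. Incidence rows like \texttt{onlineL} are homogeneous affine equalities in the point coordinates relative to the anchors, so they are also invariant. For item~3, $|sP-sQ|=s|P-Q|$, so $\ell(\Pi_i)$ scales by $s$, and arclength $\ell(s\gamma)=s\,\ell(\gamma)$ scales the same way.

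For the consequence, let $\mathcal F_i^{\rm cl}(b)$ denote the closed feasible set with offset $b$. By the preceding steps, $h$ gives a bijection $\mathcal F_i^{\rm cl}(b)\to\mathcal F_i^{\rm cl}(sb)$ under which $\ell(\Pi_i)$ is multiplied by $s$. Taking infima gives $L_i(sb)=s\,L_i(b)$, and therefore $L_i^{\rm cert}(sb)\ge s\,L_i^{\rm cert}(b)$ for the certified lower bounds. The one point needing care, and the only real obstacle, is confirming that every row in the $599$ models is of one of these homogeneous types. That is, each row must be an incidence equality, a weak order relation, or an affine row whose only constant term is a multiple of $b$, with no other absolute length constant. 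The normalization $P_{-3}=0$ and the translation-invariant differences $\overrightarrow{AX}$ guarantee this. By Lemma~\ref{lem:row-audit} and Table~\ref{tab:forced-placement-families}, each forced row has the form $d(\overrightarrow{AX},\theta)-b\ge0$ or is an incidence, which confirms the required homogeneity.
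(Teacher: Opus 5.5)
Your proposal is correct and is essentially the paper's own argument. The paper notes that all constraints are homogeneous once $b$ is replaced by $sb$, that $d(sX,\theta)=s\,d(X,\theta)$, and that Euclidean lengths scale by $s$; your explicit bijection $\mathcal F_i^{\rm cl}(b)\to\mathcal F_i^{\rm cl}(sb)$ and your row-type check simply spell out that one-line homogeneity claim. Note that since $L_i^{\rm cert}$ is defined as the infimum over $\mathcal F_i^{\rm cl}$, you actually get the equality $L_i^{\rm cert}(sb)=s\,L_i^{\rm cert}(b)$, not just $\ge$.
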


\begin{proof}
All constraints are homogeneous. Directional distance satisfies $d(sX, \theta) = s\, d(X, \theta)$, and Euclidean lengths scale by $s$. The claims follow directly.
\end{proof}

\begin{proposition}[Certified scaled triangular cover]\label{prop:scaled-cover}
The homothetic copy $T/1.0048$ covers every unit arc. Its area is
\[
\Bigl(\frac{1}{1.0048}\Bigr)^{2}\cdot\frac{24+19\sqrt3}{216}
=0.260956\ldots\;<\;\frac{\pi}{12}\approx0.261799,
\]
the area of the $30^{\circ}$ unit sector. This proves Remark~\ref{rem:conditional-scale}.
\end{proposition}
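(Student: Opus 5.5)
The plan is a scaling argument. Equivalently, I would show that the original triangle $T$ covers every arc of length $1.0048$; then Proposition~\ref{prop:scale} transfers this to $T/1.0048$ and unit arcs. Let $s=1/1.0048$ and suppose, for contradiction, that some unit arc $\gamma$ lies in no congruent copy of $sT$. Then the dilated arc $\gamma/s$ has length $1.0048$ and lies in no congruent copy of $T$, since $g(\gamma)\subseteq sT$ iff $(1/s)\,g'(\gamma/s)\subseteq T$ for the conjugated congruence.

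The next step is to rerun the contradiction argument in the proof of Theorem~\ref{thm:main}, with the length budget $1.0048$ in place of $1$. None of Lemma~\ref{lem:reduced-obstruction}, Proposition~\ref{prop:case-tree}, Lemmas~\ref{lem:canonical-tests} and~\ref{lem:forced-placements}, or Lemma~\ref{lem:row-audit} uses the value of the length bound, except as an upper bound carried through compactness. I would check this point carefully, since the Wetzel--Wichiramala polygonal reduction and the weak-hull reduction are stated for unit arcs. The cleanest way to handle it is to rescale again: apply those reductions to the unit arc $\gamma$ and the convex set $C=sT$. Lemma~\ref{lem:reduction} is stated for an arbitrary compact convex $C$. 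Afterwards, rescale the resulting weak obstruction by $1/s$, which produces a weak support-reduced normalized obstruction $\eta$ for $T$ with $\ell(\eta)\le 1.0048$. Support-role data, contact orders and escape rows are preserved, because every constraint is homogeneous (Proposition~\ref{prop:scale}) and the orientation angles are unchanged.

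Then $\eta$ falls into one of the $599$ closed models $i$ of Proposition~\ref{prop:case-tree}, whose rows are the forced rows for $T$ with threshold $b$. Lemma~\ref{lem:ordered-chain} gives
\[
L_i^{\rm cert}\le\ell(\Pi_i)\le\ell(\eta)\le 1.0048 .
\]
This contradicts the certified margins $L_i^{\rm cert}\ge 1.0048290490\ldots>1.0048$ of Appendix~\ref{app:interval-cert} and $\ge 1.0057165531\ldots$ of Appendix~\ref{app:raw-order-cert}. The inequalities are strict, so the boundary value $1.0048$ itself is handled.

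Finally, the area is $s^2\cdot\frac{24+19\sqrt3}{216}$. I would verify the stated numerical inequality against $\pi/12$ by a short interval computation: $\sqrt3\in[1.7320508,1.7320509]$ gives an area of $T$ at most $0.2634673$, and dividing by $1.0048^2=1.00962304$ gives at most $0.260958<0.2617993$. The main obstacle is the scale-transfer step. One must confirm that the reductions and certified models really only consume the length bound as an inequality, so that the margin beyond $1$ in $L_i^{\rm cert}$ can be spent. Conjugating by the homothety, rather than re-deriving every lemma for arcs of length $1.0048$, is the approach I would use.
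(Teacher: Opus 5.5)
This is correct and is essentially the paper's argument. The paper uses the homogeneity in Proposition~\ref{prop:scale} to shrink $T$ and scale every certified bound to $sL_i^{\rm cert}>1$; you dilate the arc and compare it with the unscaled models under the budget $1.0048$, which is the same computation seen through the homothety, and your conjugation through Lemma~\ref{lem:reduction} with $C=sT$ makes the transfer of the reductions more explicit than the paper's one-line appeal. One arithmetic slip: with $\sqrt3\le1.7320509$ the area of $T$ is only bounded by about $0.2634675$, not $0.2634673$, since the true value is $0.2634674321\ldots$; dividing by $1.00962304$ still gives at most $0.260957<\pi/12$, so your conclusion is unaffected.
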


\begin{proof}
From Appendix~\ref{app:summary}, the smallest reported numerical output among the representative terminal models is
\[
L_{1.1a}^{\mathrm{nb}}=1.0048313701863976\ldots.
\]
Appendix~\ref{app:interval-cert} gives the refined representative endpoint $L_{1.1a}^{\rm cert}=1.0048290490\ldots$, and Appendix~\ref{app:raw-order-cert} gives the weakest raw-order endpoint $1.0057165531\ldots$. Hence every certified representative and raw-order branch satisfies $L_i^{\rm cert}>1.0048$. By Proposition~\ref{prop:scale}, scaling $T$ by $s = 1/1.0048$ scales each lower bound by the same factor, giving $sL_i^{\rm cert}>1$ in every branch. Using the weakest certified endpoint,
\[
\frac{1.0048290490\ldots}{1.0048}\approx 1.00003>1.
\]
The raw-order minimum gives the larger ratio $1.0057165531\ldots/1.0048\approx 1.00091>1$. Hence no unit arc can satisfy the escape constraints for the scaled triangle, so the same case analysis excludes every unit arc for the scaled copy as well.

For the area, the unscaled triangle has $\tfrac{1}{2}ab=(24+19\sqrt3)/216\approx 0.263467$ by our normalization. Multiplying by $s^2 = (1/1.0048)^2$ gives $0.260956$, which is strictly smaller than $\pi/12 \approx 0.261799$.
\end{proof}

\begin{remark}
The main representative table uses the $q=0.999999$ refined logs, giving Subcase~1.1a endpoint $1.0048290490\ldots$. The supplementary tier report also records $q=0.9999$ and $q=0.99999$ comparison runs; the $q=0.999999$ run has thinner $q$-norm slack than $q=0.99999$ but passes the interval checks with guard $10^{-7}$ and singular-value floor $0.25$. Any larger shrink factor would require using the corresponding weakest certified endpoint across both the representative and raw-order families.
\end{remark}

\section{Conclusion}\label{sec:conclusion}

Theorem~\ref{thm:main} settles Wetzel's conjecture: the $30^{\circ}\!-\!60^{\circ}\!-\!90^{\circ}$ triangle $T$ obtained by placing a square of side $1/3$ on the hypotenuse covers every unit arc in the plane. Moreover, by Propositions~\ref{prop:scale} and~\ref{prop:scaled-cover}, the certified margin $L_i^{\rm cert}>1.0048$ in every branch shows that the homothetic copy $T/1.0048$ is again a cover, of area
\[
0.260956\ldots\;<\;\frac{\pi}{12}\approx 0.261799,
\]
strictly smaller than the area of Wetzel's $30^{\circ}$ unit sector cover \cite{panraksa2020}. Within the convex Wetzel-cover setting, the scaled triangle $T/1.0048$ is therefore, to our knowledge, the smallest cover for unit arcs currently known.

\section*{Acknowledgements}

The authors would like to thank the Development and Promotion of Science and Technology Talents Project (DPST), a Thai government scholarship.

\section*{Statements and Declarations}

\subsection*{Funding}
This research received no specific grant from any funding agency in the public, commercial, or not-for-profit sectors.

\subsection*{Competing interests}
The authors have no competing interests to declare.

\subsection*{Code availability}
The computational code and verification package supporting the proof are available at \url{https://github.com/chatchawanpan-dev/wetzel-triangle-computation}. The repository contains the Mathematica notebook and rendered notebook PDF, the SOCP model JSON files, the Python validation scripts, the stored certificate logs, the verification outputs, and the package manifest.

\subsection*{Data availability}
Appendix~\ref{app:opt} documents the representative and raw-order optimization models, the subcase-specific constraints, the numerical outputs used in the case analysis, and the independent interval-certificate files. The model files, certificate logs, verification reports, and SHA-256 manifest are included in the GitHub repository and are also provided as supplementary material (Online Resource~1).

\backmatter

\begin{appendices}

\section{Numerical optimization data}\label{app:opt}

This appendix records the Mathematica notebook formulations and numerical outputs for the eleven original representative terminal subcases of Section~\ref{sec:cases}, together with the direct Python SOCP model 1.1c for the both-late Subcase~1.1 branch, and then the independent interval certificates for the inequalities $L_i^{\rm cert}>1$ on the representative and raw-order models. The notebook file is \texttt{WetzelTriangle\_OptimizationCode.nb}; its header identifies Mathematica~10.4 as the creating version, and the saved front-end metadata identifies Mathematica~12.3 for Mac OS X ARM.

\subsection{Triangle parameters, notebook environment, and common primitives}\label{app:triangle-normalization}

We use exactly the same normalization as in Theorem~\ref{thm:main}. The triangle has side lengths
\[
a=\frac{3+4\sqrt{3}}{18},\qquad
b=\frac{4+\sqrt{3}}{6},\qquad
c=2a=\frac{3+4\sqrt{3}}{9},
\]
and area
\[
\frac12ab=\frac{24+19\sqrt3}{216}=0.2634674321\ldots
\]
Numerically, these are
\[
a\approx0.5515668461,\qquad b\approx0.9553418013,\qquad c\approx1.1031336923.
\]
These constants match the values used in the notebook.

\medskip\noindent\textbf{Common variables and objective.}
For each terminal subcase, the notebook introduces:
\begin{itemize}
  \item contact-point coordinates $p[i]=\{x[i],y[i]\}$ for the active index set;
  \item anchor points such as $L,R,S,T,U,V,W$ determining the canonical triangle placements;
  \item derived support parameters such as \texttt{yfloor}, \texttt{yceiling}, \texttt{xmin}, and \texttt{xmax}.
\end{itemize}
The polygonal objective is
\[
\texttt{ds[arc] := Sum[d2[arc[[i]], arc[[i+1]]], \{i, Length[arc]-1\}]},
\]
and each reported value is the first component of a call of the form
\[
\texttt{nm = NMinimize[\{ds[arc], constraints\}, var]}.
\]

\medskip\noindent\textbf{Common predicates.}
The notebook uses affine incidence predicates such as \texttt{onlineL}, \texttt{onlineR}, \texttt{onlineU}, \texttt{onlineV}, \texttt{onlineWl}, and \texttt{onlineWr} to place active contact points on the relevant supporting sides; compound predicates such as \texttt{condL}, \texttt{condR}, \texttt{condS}, and \texttt{condT} encode a canonical placement together with its side-incidence conditions, and \texttt{condSfarL}, \texttt{condTfarR} are the combined incidence-and-escape constraints of Subcases~1.1a--1.1c. Anchor names follow the notebook convention and are local to the subcase; see the branch-local substitutions below.

The directional-distance convention is
\[
d(\overrightarrow{AX},\theta):=\langle X-A,e_\theta\rangle,
\]
where the notebook predicate \texttt{dd[\(\cdot\),theta]} uses the escape vector $e_\theta$ from Section~\ref{subsec:support-distance}. Let $P_i=(x_i,y_i)$, and let the branch-local anchors be
\[
L=(x_L,y_3),\quad R=(x_R,y_3),\quad S=(x_{-6},y_S),\quad T=(x_6,y_T),
\]
\[
U=(x_U,y_0),\quad V=(x_V,y_0),\quad W=(x_W,y_W).
\]
The gauge is $x_{-3}=0$, $y_3=0$, and $y_{-3}=y_3$. Table~\ref{tab:predicate-dictionary} gives the explicit affine dictionary for the predicate names that occur in Table~\ref{tab:cert-bounds}. The machine-readable JSON files supplied as Online Resource~1 store the same constraints as row vectors, with the branch-local substitutions described below the table.

\begin{table}[t]
\centering
\caption{Notebook predicate dictionary for the terminal SOCP models.}\label{tab:predicate-dictionary}
\scriptsize
\setlength{\tabcolsep}{2.5pt}
\renewcommand{\arraystretch}{1.08}
\begin{tabularx}{\textwidth}{@{}p{2.45cm}X@{}}
\toprule
\textbf{Predicate} & \textbf{Closed affine or conic row in the certified model} \\
\midrule
\texttt{onlineL[P\_i]} & $y_i-y_3=\tan30^\circ\,(x_i-x_L)$. \\
\texttt{onlineR[P\_i]} & $y_i-y_3=\tan(-30^\circ)\,(x_i-x_R)$. \\
\texttt{onlineS[P\_i]} & $y_i-y_S=\tan120^\circ\,(x_i-x_{-6})$. \\
\texttt{onlineT[P\_i]} & $y_i-y_T=\tan60^\circ\,(x_i-x_6)$. \\
\texttt{onlineWl[P\_i]} & $y_i-y_W=\tan75^\circ\,(x_i-x_W)$. \\
\texttt{onlineWr[P\_i]} & $y_i-y_W=\tan(-75^\circ)\,(x_i-x_W)$. \\
\texttt{onlineU} & $y_{-4}-y_0=\tan(-30^\circ)\,(x_{-4}-x_U)$. \\
\texttt{onlineV} & $y_4-y_0=\tan30^\circ\,(x_4-x_V)$. \\
\texttt{farL[P\_i]} & $d(\overrightarrow{LP_i},30^\circ)\ge b$. \\
\texttt{farR[P\_i]} & $d(\overrightarrow{RP_i},150^\circ)\ge b$. \\
\texttt{farU} & $d(\overrightarrow{UP_5},-30^\circ)\ge b$. \\
\texttt{farV} & $d(\overrightarrow{VP_{-5}},-150^\circ)\ge b$. \\
\texttt{condL[P\_i]} & The equality \texttt{onlineL[P\_i]} and the floor-placement guard $x_L+b\le x_6$. \\
\texttt{condR[P\_i]} & The equality \texttt{onlineR[P\_i]} and the floor-placement guard $x_{-6}\le x_R-b$. \\
\texttt{condS[P\_i]} & The equality \texttt{onlineS[P\_i]}, namely $y_i-y_S=\tan120^\circ\,(x_i-x_{-6})$, and the wall-placement guard $y_3\le y_S-b$. \\
\texttt{condT[P\_i]} & The equality \texttt{onlineT[P\_i]}, namely $y_i-y_T=\tan60^\circ\,(x_i-x_6)$, and the wall-placement guard $y_3\le y_T-b$. \\
\texttt{condSfarL[P\_i]} & The equality $y_i-y_S=\tan120^\circ\,(x_i-x_{-6})$, the wall-placement guard $y_3\le y_S-b$, and the closed floor escape row $d(\overrightarrow{LP_i},30^\circ)\ge b$. \\
\texttt{condTfarR[P\_i]} & The equality $y_i-y_T=\tan60^\circ\,(x_i-x_6)$, the wall-placement guard $y_3\le y_T-b$, and the closed floor escape row $d(\overrightarrow{RP_i},150^\circ)\ge b$. \\
\texttt{condV} & Case~2 compound row: \texttt{onlineV}, namely $y_4-y_0=\tan30^\circ\,(x_4-x_V)$, together with the guard $x_V-x_{-6}\ge b$. This is distinct from \texttt{farV}. \\
\texttt{dot\_gt\_b} & Generic closed directional escape row $d(\overrightarrow{AX},\theta)\ge b$ for the anchor and direction recorded in the JSON model. \\
\texttt{below\_line} & Generic affine half-plane row placing a point weakly below the specified support side; strict notebook forms are closed by replacing $<$ with $\le$. \\
\bottomrule
\end{tabularx}
\end{table}

\medskip\noindent\textbf{Branch-local substitutions.}
In Case~2, the reflected ceiling placement denoted $T_{V'}$ in the main text is represented in the JSON rows by the anchor name $V$. The rows \texttt{onlineV} and \texttt{farV} are separate in Subcases~2.1a--b, while \texttt{condV} in Subcases~2.2a--b expands to \texttt{onlineV} plus $x_V-x_{-6}\ge b$. In Case~3, the same predicate names \texttt{onlineU}, \texttt{onlineV}, \texttt{farU}, and \texttt{farV} refer instead to the unprimed ceiling placements $T_U$ and $T_V$ in Figure~\ref{fig:case_3}. The certified closed model replaces each strict notebook escape inequality by the corresponding weak row in Table~\ref{tab:predicate-dictionary}.

\medskip\noindent\textbf{Row-vector example.}
For Subcases~2.2a--b, the JSON row
\[
\texttt{\{"type": "condV", "online\_point": "4", "escape\_point": "-6"\}}
\]
generates one equality and one inequality. With variables ordered as in the script output \texttt{var\_names}, the equality row is
\[
y_4-y_0-\tan30^\circ\,(x_4-x_V)=0,
\]
so the nonzero coefficients are $-\tan30^\circ$ on $x_4$, $1$ on $y_4$, $\tan30^\circ$ on $x_V$, and $-1$ on $y_0$. The guard row is
\[
x_V-x_{-6}-b\ge0,
\]
with nonzero coefficients $1$ on $x_V$, $-1$ on $x_{-6}$, and constant term $-b$. By contrast, the separate predicate \texttt{farV} used in Subcases~2.1a--b gives
\[
\langle P_{-5}-V,e_{-150^\circ}\rangle-b\ge0,
\]
which is a different affine row involving $P_{-5}$.

\medskip\noindent\textbf{Open notebook models, closed certified models, and solver settings.}
For representative terminal subcase $i$, let $F_i^{\rm open}$ be the feasible set obtained from the strict escape inequalities exactly as they appear in the notebook, and let $F_i^{\rm cl}$ be the closed feasible set obtained by replacing every strict escape inequality by the corresponding weak inequality. Let
\[
L_i^{\rm nb}
\]
denote the numerical value reported by \texttt{NMinimize} for the open notebook model, and let
\[
L_i^{\rm cert}=\inf_{F_i^{\rm cl}}\ell(\Pi_i)
\]
denote the closed-model infimum certified by the interval validation. Since $F_i^{\rm open}\subseteq F_i^{\rm cl}$, we have
\[
\inf_{F_i^{\rm open}}\ell(\Pi_i)\ge L_i^{\rm cert}.
\]
Together with the raw-order certificates in Appendix~\ref{app:raw-order-cert}, this proves the theorem by establishing $L_i^{\rm cert}>1$ for every certified branch. No explicit \texttt{Method}, \texttt{WorkingPrecision}, \texttt{AccuracyGoal}, or \texttt{PrecisionGoal} options are specified in the recorded notebook cells, so the notebook values are used only as numerical model outputs, not as rigorous proof certificates.

\subsection{Worked example: Subcase \texorpdfstring{1.1a}{1.1a}}\label{app:case11a}

\medskip\noindent\textbf{Active supports and contacts.}
In subcase 1.1a the unit arc is touched by supports at angles
\[
\Theta=\{\pm150^\circ,\ \pm120^\circ,\ \pm90^\circ,\ \pm15^\circ\},
\]
with active contact points at $P_{\pm1}$ and $P_{\pm2}$ as in the main text. The horizontal base contacts are $P_{-3},P_3$ with $y_{-3}=y_3=0$, and the apex contact $P_0$ has height $y_0=H>0$.

\medskip\noindent\textbf{Polygonal chain used for the lower bound.}
The notebook cell for Subcase~1.1a sets
\[
\texttt{arc = Table[p[i], \{i, \{-6,-3.5,-3,-2.5,-2,-1,1,2,2.5,3,3.5,6\}\}]},
\]
then minimizes \texttt{ds[arc]} subject to the corresponding incidence and escape constraints
(\texttt{condL}, \texttt{condR}, \texttt{condTfarR}, \texttt{condSfarL}, \texttt{onlineWl}, \texttt{onlineWr}, and two \texttt{dd[\(\cdot\)]>b} guards).

\medskip\noindent\textbf{Reported numerical value.}
Running \texttt{NMinimize} on this cell returns
\[
L^*_{1.1a}\approx 1.0048313701863976
\]
(\texttt{Out[291]} in the notebook). Consequently,
\[
L^*_{1.1a}-1\approx 4.83\times 10^{-3}>0,
\]
which is the strongest numerical gap suggested by the notebook. The certified gap used for Theorem~\ref{thm:main} is the interval lower endpoint in Appendix~\ref{app:interval-cert}.

\subsection{Per-subcase notebook data}\label{app:summary}

Table~\ref{tab:cert-bounds} supplements the condensed main-text Table~\ref{tab:bounds} by recording, for each representative terminal subcase, the chain used in the model, the principal predicates, and the saved reported output. Subcase~1.1c is the direct Python SOCP model; the other entries are the original Mathematica notebook models.

\begin{table}[p]
\centering
\setlength{\abovecaptionskip}{2pt}
\setlength{\belowcaptionskip}{3pt}
\caption{Per-subcase model data and saved solver outputs.}\label{tab:cert-bounds}
\fontsize{7}{7.1}\selectfont
\setlength{\tabcolsep}{3pt}
\renewcommand{\arraystretch}{0.96}
\begin{tabularx}{\textwidth}{@{}p{1.30cm}p{2.60cm}X p{2.85cm}@{}}
\toprule
\textbf{Subcase} & \textbf{\texttt{arc} list} & \textbf{Principal predicates} & \textbf{Value} \\
\midrule
1.1a &
\shortstack[l]{$\{-6,-3.5,-3,$\\$-2.5,-2,-1,1,$\\$2,2.5,3,3.5,6\}$} &
\predlines{\texttt{condL[p[-1]]}, \texttt{condR[p[1]]},\\
\texttt{condTfarR[p[-2]]}, \texttt{condSfarL[p[2]]},\\
\texttt{onlineWl[p[-2.5]]}, \texttt{onlineWr[p[2.5]]},\\
\texttt{dd[p[3.5]-W,-75 Degree]>b},\\
\texttt{dd[p[-3.5]-W,-105 Degree]>b}} &
$1.0048313701863976$ \\
1.1b &
\shortstack[l]{$\{-6.5,-6,-3,-2,$\\$-1,1,2,3,$\\$3.5,6\}$} &
\predlines{\texttt{condL[p[-1]]}, \texttt{condR[p[1]]},\\
\texttt{condTfarR[p[-2]]}, \texttt{condSfarL[p[2]]},\\
\texttt{onlineWl[p[-6.5]]},\\
\texttt{dd[p[3.5]-W,-75 Degree]>b},\\
\texttt{y[-2] < Tan[75 Degree](x[-2]-xW)+yW}} &
$1.011404204881239$ \\
1.1c &
\shortstack[l]{$\{-6.5,-6,-3,-2,$\\$-1,1,2,3,$\\$6,6.5\}$} &
\predlines{\texttt{condL[p[-1]]}, \texttt{condR[p[1]]},\\
\texttt{condTfarR[p[-2]]}, \texttt{condSfarL[p[2]]},\\
\texttt{onlineWl[p[-6.5]]}, \texttt{onlineWr[p[6.5]]},\\
\texttt{dd[p[-6.5]-W,-105 Degree]>b},\\
\texttt{dd[p[6.5]-W,-75 Degree]>b}} &
$1.016189066467773$ \\
1.2 &
\shortstack[l]{$\{-3,-2,-1,$\\$1,3,7\}$} &
\predlines{\texttt{onlineL[p[-1]]}, \texttt{onlineR[p[1]]},\\
\texttt{farR[p[-2]]}, \texttt{farL[p[7]]}} &
$1.029235945606682$ \\
1.3 &
\shortstack[l]{$\{-7,-3,-1,$\\$1,3,7\}$} &
\predlines{\texttt{onlineL[p[-1]]}, \texttt{onlineR[p[1]]},\\
\texttt{farR[p[-7]]}, \texttt{farL[p[7]]}} &
$1.1031325980781586$ \\
2.1a &
\shortstack[l]{$\{-5,-3,-2,-1,$\\$0,3,4,7,8\}$} &
\predlines{\texttt{onlineL[p[-1]]}, \texttt{onlineR[p[8]]},\\
\texttt{farR[p[-2]]}, \texttt{farL[p[7]]},\\
\texttt{onlineV}, \texttt{farV}} &
$1.0163224937990105$ \\
2.1b &
\shortstack[l]{$\{-5,-3,-2,-1,$\\$0,3,8,7,4\}$} &
\predlines{\texttt{onlineL[p[-1]]}, \texttt{onlineR[p[8]]},\\
\texttt{farR[p[-2]]}, \texttt{farL[p[7]]},\\
\texttt{onlineV}, \texttt{farV}} &
$1.0313464651372972$ \\
2.2a &
\shortstack[l]{$\{-7,-6,-3,-1,$\\$0,3,4,7,8\}$} &
\predlines{\texttt{onlineL[p[-1]]}, \texttt{onlineR[p[8]]},\\
\texttt{farR[p[-7]]}, \texttt{farL[p[7]]},\\
\texttt{condV}} &
$1.0087527063430077$ \\
2.2b &
\shortstack[l]{$\{-7,-6,-3,-1,$\\$0,3,8,7,4\}$} &
\predlines{\texttt{onlineL[p[-1]]}, \texttt{onlineR[p[8]]},\\
\texttt{farR[p[-7]]}, \texttt{farL[p[7]]},\\
\texttt{condV}} &
$1.0142026474571697$ \\
3a &
\shortstack[l]{$\{-8,-7,-5,-4,$\\$-3,0,3,4,$\\$5,7,8\}$} &
\predlines{\texttt{onlineL[p[-8]]}, \texttt{onlineR[p[8]]},\\
\texttt{farR[p[-7]]}, \texttt{farL[p[7]]},\\
\texttt{onlineU}, \texttt{onlineV}, \texttt{farU}, \texttt{farV}} &
$1.0295071769383997$ \\
3b &
\shortstack[l]{$\{-8,-7,-5,-4,$\\$-3,0,3,8,$\\$7,5,4\}$} &
\predlines{\texttt{onlineL[p[-8]]}, \texttt{onlineR[p[8]]},\\
\texttt{farR[p[-7]]}, \texttt{farL[p[7]]},\\
\texttt{onlineU}, \texttt{onlineV}, \texttt{farU}, \texttt{farV}} &
$1.0295069260672876$ \\
3c &
\shortstack[l]{$\{-4,-5,-7,-8,$\\$-3,0,3,8,$\\$7,5,4\}$} &
\predlines{\texttt{onlineL[p[-8]]}, \texttt{onlineR[p[8]]},\\
\texttt{farR[p[-7]]}, \texttt{farL[p[7]]},\\
\texttt{onlineU}, \texttt{onlineV}, \texttt{farU}, \texttt{farV}} &
$1.0295069435171533$ \\
\midrule
\textbf{Global min} & & & $\mathbf{1.0048313701863976}$ \\
\bottomrule
\end{tabularx}
\end{table}

\subsection{Independent interval certificate}\label{app:interval-cert}

Each model in Table~\ref{tab:cert-bounds} was reconstructed as a convex second-order cone program (SOCP). The variables are the active contact-point and anchor coordinates; the constraints are the affine incidence, ordering, and escape inequalities; and the objective is the same polygonal chain length. The SOCP optimum reconstruction reproduces the eleven original Mathematica values and includes the direct both-late model 1.1c. The certified endpoints in Table~\ref{tab:interval-cert} are conservative dual lower bounds for the closed SOCP models and are smaller by construction.

For a model with coordinate vector $z$ and chain segments $s_j=S_jz$, the primal closed SOCP is written in the standard epigraph form
\[
\inf_{z,\tau}\ \sum_{j=1}^m \tau_j
\quad\text{subject to}\quad
\|S_jz\|_2\le \tau_j,\qquad
a_k\cdot z+c_k=0,\qquad
g_\ell\cdot z+d_\ell\ge0 .
\]
The sign convention for inequalities is always $g_\ell\cdot z+d_\ell\ge0$ with multiplier $\mu_\ell\ge0$; equality multipliers $\lambda_k$ are unrestricted. All exact constants used in the row matrices lie in the real algebraic field generated by $\sqrt2,\sqrt3,\sqrt6$, and the scripts check the displayed certificate inequalities using outward-rounded rational interval enclosures of those algebraic constants. The active-row convention is conservative: rows with tiny inactive multipliers may be dropped only after the perturbation audit proves that setting them to zero preserves feasibility of the repaired dual certificate.

\begin{theorem}[Finite SOCP perturbation certificate]\label{thm:socp-cert}
Consider one closed SOCP model with coordinate vector $z$, affine equality rows
\[
a_k\cdot z+c_k=0,
\]
affine inequality rows
\[
g_\ell\cdot z+d_\ell\ge0,
\]
and polygonal segment maps $s_j=S_jz$ (coordinate differences between consecutive contact points), so that the objective is $\sum_j\|s_j\|$. A numerical dual certificate consists of segment dual vectors $q_j$, unrestricted equality multipliers $\lambda_k$, and nonnegative inequality multipliers $\mu_\ell$ such that the stationarity residual is
\[
r=\sum_j S_j^\top q_j-\sum_k\lambda_k a_k-\sum_\ell\mu_\ell g_\ell .
\]
Let $A$ be the active set of inequality multipliers retained after dropping tiny nonactive multipliers, and let
\[
u=(q_1,\ldots,q_m,\lambda,\mu_A)
\]
be the retained certificate-variable vector. The norm on $u$ is the product Euclidean norm: each two-dimensional block $q_j$, each equality multiplier, and each retained inequality multiplier is included in the same Euclidean vector. Let $M$ be the stationarity matrix whose columns are the coefficients of these retained variables in the stationarity equation. Write $n$ for the dimension of the coordinate vector $z$, so that the residual satisfies $r\in\R^n$, and regard $M$ as a linear map from the retained certificate-variable space, of dimension $2m+K+|A|$ with $K$ the number of equality rows, onto the stationarity-residual space $\R^n$. Assume that $M$ has full row rank, and let $\sigma_{\min}(M)$ denote its smallest singular value as such a map; the row-rank hypothesis and the floor $\sigma_{\min}(M)\ge\sigma_0$ are certified by the row-basis singular-value checks of the algebraic interval layer (Table~\ref{tab:interval-cert}). The active certificate value is
\[
\alpha=-\sum_k c_k\lambda_k-\sum_{\ell\in A}d_\ell\mu_\ell,
\]
where dropped multipliers are set to zero. Let $h$ be the coefficient vector of this scalar functional in the retained variables $u$; in particular the $q_j$ blocks have coefficient zero, the equality-multiplier coefficients are $-c_k$, and the active inequality-multiplier coefficients are $-d_\ell$. Suppose outward-rounded interval checks verify
\[
\|r\|_2\le \rho,\qquad \sigma_{\min}(M)\ge\sigma_0>0.
\]
Then there is a correction $\Delta$ of the retained certificate variables with
\[
\|\Delta\|_2\le \rho/\sigma_0
\]
that makes stationarity exact. If the verified guard satisfies
\[
\rho/\sigma_0+\eta
<
\min\{1-\max_j\|q_j\|,\ \min_{\ell\in A}\mu_\ell\},
\]
then the corrected certificate still has $\|q_j\|\le1$ and nonnegative inequality multipliers. If, in addition,
\[
\alpha-\|h\|_2\,(\rho/\sigma_0)-\eta>1,
\]
then every feasible point of the closed SOCP model has objective greater than one.
\end{theorem}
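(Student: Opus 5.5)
The proof has three parts: a least-norm correction, a check that the corrected multipliers stay dual feasible, and weak duality for the SOCP.

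\emph{Exact stationarity.} Since $M$ has full row rank onto $\R^n$, the Moore--Penrose pseudoinverse $M^+=M^\top(MM^\top)^{-1}$ is a right inverse with operator norm $\|M^+\|_2=1/\sigma_{\min}(M)\le 1/\sigma_0$. Stationarity is linear in $u$ and reads $Mu=r$ in the sign convention of the statement (target $0$). So I set $\Delta=-M^+r$. Then $M(u+\Delta)=r-r=0$ and $\|\Delta\|_2\le\rho/\sigma_0$. The product Euclidean norm bounds each component of $\Delta$, so $\|\Delta q_j\|\le\rho/\sigma_0$ for every block and $|\Delta\mu_\ell|\le\rho/\sigma_0$. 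The slack $\eta\ge0$ in the guard absorbs the outward-rounding errors in the interval evaluation of $\max_j\|q_j\|$ and $\min_\ell\mu_\ell$. With these bounds, the first guard inequality gives $\|q_j+\Delta q_j\|\le\|q_j\|+\rho/\sigma_0<1$ and $\mu_\ell+\Delta\mu_\ell>0$. The dropped multipliers are zero, so they are trivially admissible.

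\emph{Weak duality.} Let $(\tilde q,\tilde\lambda,\tilde\mu)$ be the corrected certificate and let $z$ be any feasible point of the closed model. By Cauchy--Schwarz and $\|\tilde q_j\|\le1$,
\[
\sum_j\|S_jz\|\ge\sum_j\langle\tilde q_j,S_jz\rangle=\Bigl\langle\sum_jS_j^\top\tilde q_j,z\Bigr\rangle=\sum_k\tilde\lambda_k\,a_k\cdot z+\sum_{\ell\in A}\tilde\mu_\ell\,g_\ell\cdot z .
\]
On the feasible set, $a_k\cdot z=-c_k$ and $\tilde\mu_\ell\,g_\ell\cdot z\ge-\tilde\mu_\ell d_\ell$. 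Hence the objective is at least $-\sum_k c_k\tilde\lambda_k-\sum_{\ell\in A}d_\ell\tilde\mu_\ell=h\cdot(u+\Delta)$.

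\emph{Bounding the corrected value.} We have $h\cdot(u+\Delta)=\alpha+h\cdot\Delta\ge\alpha-\|h\|_2\,\rho/\sigma_0$. The extra $-\eta$ in the hypothesis again covers rounding in the evaluation of $\alpha$ and $\|h\|_2$. The final inequality of the hypothesis then gives that every feasible objective value is $>1$. This applies to the closed feasible set, so the infimum $L_i^{\rm cert}$ also exceeds one, because it is bounded below by a number strictly greater than one.

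\emph{Main obstacle.} The delicate point is bookkeeping rather than analysis. One must confirm that the residual defined with the minus signs matches the sign convention $g_\ell\cdot z+d_\ell\ge0$ with $\mu_\ell\ge0$ used in the duality chain. One must also confirm that the columns of $M$ range over exactly the retained variables, so that the correction does not resurrect dropped multipliers. I would write out the Lagrangian explicitly to fix these signs before applying the pseudoinverse bound.
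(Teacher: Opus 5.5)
Your proposal is correct and follows essentially the same route as the paper. Both arguments use the least-norm correction $\Delta=-M^\top(MM^\top)^{-1}r$ with $\|\Delta\|_2\le\rho/\sigma_0$, blockwise control of $q_j$ and $\mu_\ell$ through the product Euclidean norm, the standard SOCP weak-duality chain with $\|q_j\|\le1$ and $\mu_\ell\ge0$, and the bound $|h\cdot\Delta|\le\|h\|_2\,\rho/\sigma_0$ on the shift in the certificate value. Like the paper, you implicitly read $r$ as the residual with the dropped multipliers already set to zero (so $r=Mu$) and take $\eta\ge0$, which is the intended interpretation.
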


\begin{proof}
For an exact dual certificate with $\|q_j\|\le1$ and $\mu_\ell\ge0$, the standard SOCP dual inequality gives
\[
\sum_j\|s_j\|\ge \sum_j\langle q_j,s_j\rangle
=
-\sum_k c_k\lambda_k-\sum_\ell d_\ell\mu_\ell
+\sum_\ell \mu_\ell(g_\ell\cdot z+d_\ell),
\]
because the equality rows vanish on feasible points. The final term is nonnegative, so the displayed constant is a valid lower bound.

The interval matrix check gives $\sigma_{\min}(M)\ge\sigma_0$. Because $M$ has full row rank, $MM^\top$ is invertible and the least-norm solution $\Delta=-M^\top(MM^\top)^{-1}r$ of $M\Delta=-r$ satisfies $\|\Delta\|_2\le\|r\|_2/\sigma_{\min}(M)\le\rho/\sigma_0$. Applying this correction makes stationarity exact. Since the product Euclidean norm dominates every block, $\|\Delta\|_2$ bounds the correction of each individual block: each $q_j$ block, each $\lambda_k$, and each retained $\mu_\ell$ moves by at most $\|\Delta\|_2$. This is why the guard inequality keeps every corrected segment dual vector $q_j$ inside the unit dual ball ($\|q_j\|\le1$) and every retained inequality multiplier nonnegative ($\mu_\ell\ge0$); the dropped multipliers are set to zero. The correction changes the lower-bound constant by at most $\|h\|_2\|\Delta\|_2$, and the scalar guard $\eta$ accounts for the outward-rounded interval allowances used in the scripts. Therefore the corrected exact certificate proves the stated lower bound.
\end{proof}

In the certificate logs, $\rho$ is the field \texttt{residual\_bound} and the audited residual is \texttt{stationarity\_residual\_norm2}; $\sigma_0$ is \texttt{sigma\_floor}; and $\eta$ is the field \texttt{guard}. The active set $A$ is stored as \texttt{active\_mu}, with omitted inactive multipliers stored as \texttt{dropped\_mu}. Conic-block feasibility of the repaired dual vector is certified by the fields \texttt{q\_margin\_after\_guard\_interval} and \texttt{q\_margin\_to\_unit}; multiplier nonnegativity is certified by \texttt{mu\_margin\_after\_guard\_interval} and \texttt{active\_mu\_margin}. The lower interval is stored as \texttt{guarded\_lower\_bound\_interval} and is accepted only when its lower endpoint is strictly greater than $1$.

\begin{proposition}[Dual lower-bound certificate]\label{prop:dual-certificate}
For each representative terminal subcase, the lower endpoint reported in Table~\ref{tab:interval-cert} is a rigorous lower bound for the corresponding closed SOCP model. The same certificate theorem and validation scripts are used for the raw-order models in Table~\ref{tab:raw-order-cert}.
\end{proposition}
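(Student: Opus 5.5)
The proof will be a direct application of Theorem~\ref{thm:socp-cert}, carried out once for each representative model. The plan has four steps.

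First, I fix one representative closed model $i$ and its data: the row matrices $a_k,c_k,g_\ell,d_\ell$ and the segment maps $S_j$. These are generated from the JSON rows. By Lemma~\ref{lem:row-audit}, they coincide with the affine forms of the forcing placements. All constants lie in $\mathbb{Q}(\sqrt2,\sqrt3,\sqrt6)$, so I enclose each one by an outward-rounded rational interval. I then check that the closed feasible set $\mathcal F_i^{\rm cl}$ is exactly the feasible set of this SOCP in epigraph form, so that the conclusion of Theorem~\ref{thm:socp-cert} really bounds $L_i^{\rm cert}=\inf_{\mathcal F_i^{\rm cl}}\ell(\Pi_i)$ from below.

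Second, I take the stored numerical dual certificate $(q_j,\lambda_k,\mu_\ell)$ and the stored active set $A$. Using interval arithmetic in these enclosures, I compute:
\begin{itemize}
\item an outward-rounded upper bound $\rho$ on the stationarity residual $\|r\|_2$;
\item the active value $\alpha$ and the norm $\|h\|_2$;
\item the margins $1-\max_j\|q_j\|$ and $\min_{\ell\in A}\mu_\ell$.
\end{itemize}
The certificate radius $q=0.999999$ enters here: the $q_j$ are scaled so that $\|q_j\|\le q$, which leaves a positive conic margin.

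Third, I certify the two structural hypotheses on $M$: full row rank and $\sigma_{\min}(M)\ge\sigma_0$. I plan to select a square row basis (an $n\times n$ column submatrix), because $\sigma_{\min}(M)$ is at least the smallest singular value of any $n$-column submatrix. I then bound that submatrix's smallest singular value from below. To do this, I verify with interval Cholesky that $B^\top B-\sigma_0^2 I$ is positive definite, using the floor $\sigma_0=0.25$. This is the step I expect to be the main obstacle. Interval widths from the algebraic constants, together with possible near-degeneracy at the certified optimum (coincident contacts make some segments vanish), could spoil the definiteness test. If that happens, I would first try a different choice of retained columns. Failing that, I would tighten the rational enclosures of $\sqrt2,\sqrt3,\sqrt6$.

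Finally, I check the guard inequality
\[
\rho/\sigma_0+\eta<\min\Bigl\{1-\max_j\|q_j\|,\ \min_{\ell\in A}\mu_\ell\Bigr\}
\]
and the bound
\[
\alpha-\|h\|_2\,\rho/\sigma_0-\eta>1.
\]
Here $\eta=10^{-7}$, and the comparison uses the interval upper and lower endpoints respectively. Theorem~\ref{thm:socp-cert} then gives the reported lower endpoint as a rigorous lower bound on $L_i^{\rm cert}$. Repeating this for each of the twelve representative models, and then unchanged for the $587$ raw-order JSON models, proves the proposition. The only model-dependent inputs are the stored certificate files, and the one-command verifier checks exactly these inequalities.
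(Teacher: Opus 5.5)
This is correct and is essentially the paper's proof: you instantiate Theorem~\ref{thm:socp-cert} model by model, bound the stationarity residual and certify $\sigma_{\min}(M)\ge 0.25$ through a square row-basis submatrix using outward-rounded enclosures of $\sqrt2,\sqrt3,\sqrt6$, and then verify the guard and lower-bound inequalities; the only technical difference is that the paper certifies the floor with a preconditioned interval Gershgorin test where you propose interval Cholesky. One inaccuracy: the raw-order family is not run ``unchanged'', since it uses the stored $q=0.997$ certificates with $\rho=10^{-8}$ and $\eta=10^{-5}$, which changes the numbers but not the argument.
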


\begin{proof}
The validation pipeline instantiates Theorem~\ref{thm:socp-cert}. The numerical dual certificate files provide the vectors \texttt{q}, equality multipliers \texttt{lambda}, inequality multipliers \texttt{mu}, and the equality and inequality constants. The perturbation audit selects the active inequality multipliers, records the dropped ones, and bounds the stationarity residual and repair size. The refined representative table uses $q=0.999999$ dual vectors with coarse bounds $\rho=10^{-7}$ and $\sigma_0=0.25$, together with the scalar guard $\eta=10^{-7}$. The refined raw-order table uses the retained $q=0.997$ raw-order dual vectors with $\rho=10^{-8}$, $\sigma_0=0.25$, and $\eta=10^{-5}$. In both families the scalar interval check verifies that the repaired certificate still has $\|q_j\|\le1$, nonnegative multipliers, and lower bound greater than one. The algebraic matrix check rebuilds the stationarity matrices from outward-rounded algebraic constants and verifies the residual and row-span bounds reported in Table~\ref{tab:interval-cert}. Hence the table entries are valid lower bounds for the enlarged closed feasible sets, and therefore also for the original strict notebook models.
\end{proof}

\begin{table}[t]
\centering
\caption{Certificate input files and model fields used to instantiate Theorem~\ref{thm:socp-cert}.}\label{tab:certificate-fields}
\scriptsize
\setlength{\tabcolsep}{4pt}
\renewcommand{\arraystretch}{1.16}
\begin{tabularx}{\textwidth}{@{}p{3.35cm}X@{}}
\toprule
\textbf{File or field} & \textbf{Mathematical role} \\
\midrule
Model JSON & Closed SOCP specifications in \texttt{wetzel\_socp\_models.json}: subcase id, chain \texttt{arc}, active constraints, anchors, and stored reported value. The scripts reconstruct the segment maps and affine rows from these entries. \\
Variables & Coordinate vector $z$, stored as \texttt{var\_names}, for the reconstructed equality and inequality matrices. \\
Constants & The fields \texttt{eq\_constants} and \texttt{ineq\_constants}, giving $c_k$ and $d_\ell$ in $a_k\cdot z+c_k=0$ and $g_\ell\cdot z+d_\ell\ge0$. \\
Segment duals & Segment dual vectors $q_j$, generated with radius $\texttt{q\_radius}=0.999999$ for the refined representative table and $\texttt{q\_radius}=0.997$ for the raw-order table. \\
Multipliers & Equality multipliers $\lambda_k$ and inequality multipliers $\mu_\ell$, stored as \texttt{lambda} and \texttt{mu}. \\
\bottomrule
\end{tabularx}
\end{table}

\begin{table}[t]
\centering
\caption{Perturbation-audit fields used in the independent interval certificate.}\label{tab:certificate-audit-fields}
\scriptsize
\setlength{\tabcolsep}{4pt}
\renewcommand{\arraystretch}{1.16}
\begin{tabularx}{\textwidth}{@{}p{3.55cm}X@{}}
\toprule
\textbf{Field} & \textbf{Mathematical role} \\
\midrule
Active rows & The \texttt{active\_mu} and \texttt{dropped\_mu} fields record inequality rows retained in the perturbation repair and rows fixed to zero. \\
Residual norm & Euclidean norm of the numerical stationarity residual before repair, stored as \texttt{stationarity\_residual\_norm2}. \\
Singular value & Numerical singular-value check for the active stationarity matrix; the interval proof uses the weaker floor \texttt{sigma\_floor}. \\
Repair bounds & Coarse interval assumptions, stored as \texttt{residual\_bound} and \texttt{sigma\_floor}. The refined representative table uses $\rho=10^{-7}$ and $\sigma_0=0.25$; the refined raw-order table uses $\rho=10^{-8}$ and $\sigma_0=0.25$. \\
Lower interval & Outward-rounded lower-bound interval after repair and guard subtraction, stored as \texttt{guarded\_lower\_bound\_interval}. \\
Algebraic residual & Interval upper bound for the residual rebuilt from outward-rounded constants. \\
Row-span residual & Interval check that omitted rows are in the certified row span. \\
\bottomrule
\end{tabularx}
\end{table}

For the main representative validation, we use refined dual SOCP certificates with segment vectors satisfying $\|q_j\|\le 0.999999$. The scalar interval layer applies a $10^{-7}$ guard and a $4\times10^{-7}$ stationarity-repair allowance to obtain the certified lower endpoints in Table~\ref{tab:interval-cert}. The raw-order validation retains the $q=0.997$ certificates and applies a $10^{-5}$ guard and a $4\times10^{-8}$ stationarity-repair allowance. The algebraic matrix layer rebuilds the stationarity matrices from outward-rounded interval enclosures of the exact constants
\[
\sqrt2,\quad \sqrt3,\quad \sqrt6,
\]
and the derived constants $\tan 30^\circ$, $\tan 60^\circ$, $\tan 75^\circ$, and the corresponding unit normal vectors. It then interval-checks the stationarity residuals, row-basis singular-value floor, and omitted-row dependencies.

\begin{table}[p]
\centering
\caption{Independent interval-certificate lower endpoints and algebraic matrix checks. The column ``Lower endpoint'' is the lower endpoint of the stored interval field \texttt{guarded\_lower\_bound\_interval}. All printed decimals are displayed conservatively: lower endpoints are truncated, and the residual and row-span upper bounds are rounded upward, so every printed value is itself a valid bound in the stated direction.}\label{tab:interval-cert}
\scriptsize
\setlength{\tabcolsep}{3pt}
\renewcommand{\arraystretch}{1.12}
\begin{tabular}{@{}lccc@{}}
\toprule
\textbf{Subcase} & \textbf{Lower endpoint} & \textbf{Residual upper} & \textbf{Row-span upper} \\
\midrule
1.1a & $1.0048290490$ & $1.2366\times 10^{-9}$ & $5.7380\times 10^{-12}$ \\
1.1b & $1.0114028836$ & $8.4971\times 10^{-8}$ & $8.4666\times 10^{-12}$ \\
1.1c & $1.0161871592$ & $2.3954\times 10^{-8}$ & $9.4685\times 10^{-12}$ \\
1.2 & $1.0292360763$ & $1.0033\times 10^{-11}$ & $3.1553\times 10^{-12}$ \\
1.3 & $1.1031319489$ & $9.9978\times 10^{-12}$ & $3.1553\times 10^{-12}$ \\
2.1a & $1.0163207040$ & $1.2280\times 10^{-11}$ & $3.1559\times 10^{-12}$ \\
2.1b & $1.0313447787$ & $1.2172\times 10^{-11}$ & $3.1563\times 10^{-12}$ \\
2.2a & $1.0087511086$ & $1.2095\times 10^{-11}$ & $3.1564\times 10^{-12}$ \\
2.2b & $1.0142008269$ & $1.2031\times 10^{-11}$ & $3.1562\times 10^{-12}$ \\
3a & $1.0295048227$ & $1.3874\times 10^{-11}$ & $3.1571\times 10^{-12}$ \\
3b & $1.0295048226$ & $1.4074\times 10^{-11}$ & $3.1564\times 10^{-12}$ \\
3c & $1.0295048221$ & $1.4257\times 10^{-11}$ & $3.1558\times 10^{-12}$ \\
\midrule
\textbf{Representative minimum} & $\mathbf{1.0048290490}$ & & \\
\bottomrule
\end{tabular}
\end{table}

The decimal entries in Table~\ref{tab:interval-cert} are displayed conservatively, as described in the caption, and are not used as certificates by themselves. These are guarded dual lower endpoints, not the Mathematica optimum values in Table~\ref{tab:cert-bounds}; for example, Subcase~1.1a has Mathematica value $1.0048313701863976\ldots$, while the $q=0.999999$ certified endpoint used in the main representative table is $1.0048290490\ldots$. The proof uses the full outward-rounded intervals stored in the JSON logs, in particular the field \texttt{guarded\_lower\_bound\_interval}; those intervals have lower endpoints strictly greater than one. The supplementary tier report also records $q=0.9999$ and $q=0.99999$ comparison checks; the $q=0.99999$ Subcase~1.1a endpoint is $1.0048177015\ldots$ with wider $q$-norm slack.

The algebraic matrix check also proves a row-basis singular-value floor of $0.25$ in every representative subcase. The smallest numerical row-basis singular value found in the checked bases is $0.252350$, and the smallest preconditioned interval Gershgorin lower endpoint is approximately $0.999999999948$. Thus every representative closed SOCP model has certified lower bound greater than one.

\subsection{Raw-order SOCP certificates}\label{app:raw-order-cert}

The nonrepresentative Case~2 and Case~3 tail orders are certified directly rather than reduced by endpoint replacement or by an unproved Case~3 order exclusion. The supplementary file \texttt{data/wetzel\_socp\_raw\_order\_models.json} contains the $587$ additional closed SOCP models: $14$ nonrepresentative Case~2 models and $573$ nonrepresentative Case~3 models. Together with the three representative Case~3 notebook models, these cover all $24\times24$ Case~3 left-right order pairings. The auxiliary models use the same affine predicate dictionary as Table~\ref{tab:predicate-dictionary}, but replace the representative \texttt{arc} list by the raw contact order in the corresponding branch. There are no separate Mathematica notebook outputs for these auxiliary models; they are reconstructed and certified directly by the Python SOCP and interval-validation scripts.

\begin{table}[p]
\centering
\caption{Additional raw-order SOCP certificate groups. The lower endpoints are the weakest outward-rounded coarse interval lower bounds within each group, printed truncated; full per-model intervals are stored in the corresponding raw-order coarse interval log.}\label{tab:raw-order-cert}
\fontsize{7}{7.5}\selectfont
\setlength{\tabcolsep}{2pt}
\renewcommand{\arraystretch}{1.04}
\begin{tabularx}{\textwidth}{@{}>{\raggedright\arraybackslash}p{2.35cm}>{\raggedright\arraybackslash}X>{\centering\arraybackslash}p{0.75cm}>{\raggedright\arraybackslash}p{2.05cm}>{\raggedright\arraybackslash}p{2.20cm}@{}}
\toprule
\textbf{Family} & \textbf{Raw orders certified} & \textbf{No.} & \textbf{Weakest lower endpoint} & \textbf{Notes} \\
\midrule
Case~2.1 & Right-tail rows C2-3--C2-6:
$P_4P_8P_7$, $P_7P_4P_8$, $P_7P_8P_4$, $P_8P_4P_7$. & 4 & $1.0132634481$ & Same rows as 2.1a--b, with raw right-tail \texttt{arc} lists. \\
Case~2.2, representative left order & Right-tail rows C2-3--C2-6 with left order $P_{-7}P_{-6}P_{-3}$. & 4 & $1.0057165531$ & Same \texttt{condV} row as 2.2a--b. \\
Case~2.2, reflected raw left order & Left order $P_{-6}P_{-7}P_{-3}$ paired with all six right-tail rows C2-1--C2-6. & 6 & $1.0057165542$ & Certifies the reflected left-tail alternative directly. \\
Case~3 exhaustive all-order family & Every nonrepresentative pairing in the $24\times24$ grid of left and right orders of $P_4,P_5,P_7,P_8$ and reflected labels. & 573 & $1.0264081173$ & Together with 3a--3c, certifies all $576$ Case~3 pairings. \\
\midrule
\textbf{Raw-order minimum} & & \textbf{587} & $\mathbf{1.0057165531}$ & All raw-order lower endpoints are $>1.0048$. \\
\bottomrule
\end{tabularx}
\end{table}

The raw-order dual data are stored in \path{r/socp_dual_certificates_raw_order_q0997_numeric.json}. The perturbation audit is \path{r/perturbation_certificate_audit_raw_order_q0997.json}. The two refined interval checks are \path{r/coarse_interval_raw_order_q0997_refined.json} and \path{r/algebraic_interval_raw_order_q0997_refined.json}. The algebraic matrix check passes for every one of the $587$ raw-order models, using the same validation script and the refined raw-order $q=0.997$ scalar guard settings.

\subsection{Row-to-placement soundness audit}\label{app:audit}

The geometric forcing of Lemmas~\ref{lem:canonical-tests} and~\ref{lem:forced-placements} enters the certified models through individual affine rows. The following lemma states the row-level obligation that ties the two layers together. It is part of the formal proof and is machine-checked by the one-command verifier of Appendix~\ref{app:repro}.

\begin{lemma}[Row-to-placement soundness]\label{lem:row-audit}
For each of the $599$ certified closed models and each constraint row of its JSON specification, the audit file \path{r/forced_placement_audit.csv} contains exactly one record
\[
\begin{gathered}
(\textit{source},\ \textit{model id},\ \textit{constraint index},\ \textit{predicate},\\
\textit{placement anchor},\ \textit{witness data},\ \textit{direction data}),
\end{gathered}
\]
and the affine rows reconstructed from this record alone---the side-incidence equalities $y_i-y_A=\tan(\mathrm{dir})\,(x_i-x_A)$, the escape rows $d(\overrightarrow{AX},\theta_{\rm short})-b\ge0$, and the recorded delimiter guards---have the same coefficient vectors and constant terms as the rows that the certificate pipeline builds from the JSON constraint. Consequently, every escape or guard inequality in every certified model is the affine form of a forcing certificate $(\hbox{placement},\hbox{adjacent support sides},A,X,\theta_{\rm short})$ in the sense of Lemma~\ref{lem:forced-placements}; the single \texttt{below\_line} record (representative model 1.1b, constraint~7, discussed in Section~\ref{sec:triangle}) is a closed side-containment row rather than an escape row and is audited under the same schema.
\end{lemma}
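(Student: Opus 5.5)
This lemma is a finite, machine-checkable statement, so I would prove it by giving an explicit reconstruction procedure and arguing that the verifier runs it exhaustively. For each of the $599$ models I would enumerate the constraint list of its JSON specification, which is $77$ representative rows plus $4658$ raw-order rows. For each constraint I would look up the audit records in \path{r/forced_placement_audit.csv} keyed by (source, model id, constraint index). The first check is \emph{bijectivity}: every JSON constraint has exactly one record, and no record refers to a nonexistent constraint. This is a set-equality check on key tuples and is decided by sorting.

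The second step defines the reconstruction map $\Phi$ from an audit record to affine rows. It uses only the record fields and the fixed dictionary of Table~\ref{tab:predicate-dictionary}, never the JSON constraint itself. Its three clauses are as follows.
\begin{itemize}
\item An incidence record with anchor $A$, point $P_i$ and direction dir yields $y_i-y_A-\tan(\mathrm{dir})(x_i-x_A)=0$.
\item An escape record $(A,X,\theta_{\rm short})$ yields $\cos\theta_{\rm short}(x_X-x_A)+\sin\theta_{\rm short}(y_X-y_A)-b\ge0$.
\item A guard record yields the coordinate difference minus $b$, as in Table~\ref{tab:forced-guard-rows}.
\end{itemize}
Compound predicates such as \texttt{condV}, \texttt{condSfarL} and \texttt{condTfarR} expand into the listed tuple of such rows. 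Independently, the certificate pipeline's own row builder maps the JSON constraint to rows $(a_k,c_k)$ or $(g_\ell,d_\ell)$ over \texttt{var\_names}, including the branch-local anchor substitutions ($V$ standing for $T_{V'}$ in Case~2). The lemma's core assertion is then that $\Phi(\text{record})$ equals the builder's output entrywise, in both coefficient vector and constant term.

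All constants lie in $\mathbb{Q}(\sqrt2,\sqrt3,\sqrt6)$, so I would carry out this comparison exactly. Each entry would be represented as a rational combination of $1,\sqrt2,\sqrt3,\sqrt6$, using for example $\tan75^\circ=2+\sqrt3$, $\cos75^\circ=(\sqrt6-\sqrt2)/4$ and $b=(4+\sqrt3)/6$. Equality is then decided coefficientwise without rounding. I would check the worked example for model 1.1a, constraint~7, by hand as a sanity case. The single \texttt{below\_line} row of model 1.1b is handled by a separate clause of $\Phi$ producing the closed half-plane $\tan75^\circ(x_{-2}-x_W)+y_W-y_{-2}\ge0$.

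The "Consequently" part follows by combining this with Lemma~\ref{lem:forced-placements}. Each record names a placement family, its anchor and its two adjacent support sides, and the lemma shows that under noncoverage this certificate forces the named escape or guard inequality. Since the model row is literally $\Phi$ of that record, the model row is forced.

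The main obstacle is not the arithmetic but ensuring that $\Phi$ is genuinely independent of the JSON row builder, so the check is not circular. In particular, the sign and angle conventions must be checked: $e_\theta$ for escapes versus line direction for incidences, the reflected placements, and the Case~2 versus Case~3 meaning of anchor $V$. I would address this by deriving $\Phi$'s clauses directly from Table~\ref{tab:short-side-equations} rather than from code. I would also spot-verify one record per predicate family by hand against the geometric placement in Figure~\ref{fig:canonical}.
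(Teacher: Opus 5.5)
Your proposal is correct and matches the paper's proof in substance. Both arguments make a finite mechanical check that the audit CSV is in bijection with the $4735$ JSON constraint rows. Both then reconstruct each incidence, escape, and guard row from the record alone and compare it with the certificate pipeline's row. Both finally appeal to Lemma~\ref{lem:forced-placements} for the geometric forcing.

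The paper additionally regenerates the audit tuples from the model files and checks field-by-field agreement, a step you omit. Your exact comparison in $\mathbb{Q}(\sqrt2,\sqrt3,\sqrt6)$ is a harmless strengthening, provided the pipeline's row builder is also instantiated with the exact constants rather than floats.
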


\begin{proof}
The obligation is finite and is verified mechanically. The script \path{scripts/check_forced_placement_audit.py} (i) regenerates the audit tuples from the model files \path{data/wetzel_socp_models.json} and \path{data/wetzel_socp_raw_order_models.json}; (ii) checks that the shipped CSV is in bijection with the constraint rows---$4735$ records in total, $77$ representative and $4658$ raw-order, exactly one per pair (model id, constraint index)---and that every stored field agrees with the regenerated tuple; and (iii) independently reconstructs each incidence, escape, and guard row from the recorded tuple, using only the anchor coordinates of Appendix~\ref{app:triangle-normalization} and the directions in the tuple, and compares the resulting coefficient vectors and constant terms with the rows used by the certificate pipeline. The one-command verifier runs this check and fails unless all rows match. The geometric content---that the branch hypotheses force each placement and hence each escape row---is Lemma~\ref{lem:forced-placements}.
\end{proof}

\subsection{Reproducibility}\label{app:repro}

The supplementary computation package (Online Resource~1), mirrored at \url{https://github.com/chatchawanpan-dev/wetzel-triangle-computation}, contains the original Mathematica notebook, a rendered notebook PDF, the closed SOCP reconstructions in JSON form, the Python validation scripts, and the certificate logs used in this appendix. We emphasize the distinction between \emph{certificate validation} and \emph{solver regeneration}: the proof rests on the stored dual-certificate vectors and interval reports, validated by the scripts below; rerunning the SOCP solvers or the Mathematica notebook reproduces the numerical models but is optional and is not part of the proof. The files are listed in the package manifest. To reproduce the notebook values reported in Table~\ref{tab:cert-bounds}:

\begin{enumerate}
  \item Open \texttt{WetzelTriangle\_OptimizationCode.nb} and evaluate the initialization cells defining \texttt{d2}, \texttt{ds}, \texttt{u[\(\theta\)]}, \texttt{dd}, the contact-point coordinates \texttt{p[i]}, and the incidence and escape predicates.
  \item Evaluate each labeled case cell: Subcases~1.1a, 1.1b, 1.2, 1.3, 2.1a, 2.1b, 2.2a, 2.2b, and Cases~3a, 3b, 3c.
  \item Record the value of \texttt{nm[[1]]} in each case cell; these are the outputs reported in Table~\ref{tab:cert-bounds}.
  \item Confirm that the smallest reported output is the value for Subcase~1.1a, namely $1.0048313701863976\ldots > 1$.
\end{enumerate}

The direct both-late model 1.1c is not an original Mathematica notebook cell; it is an additional SOCP model stored in the representative model JSON and certified by the same Python interval pipeline.

The independent interval certificate is reproduced from the supplementary \texttt{nv} folder. The file \texttt{data/wetzel\_socp\_models.json} stores the closed SOCP reconstruction of the representative models, including 1.1c. The refined representative dual data are in \texttt{r/socp\_dual\_certificates\_q0999999\_numeric.json}. Running
\begingroup\fontsize{6.5}{7.2}\selectfont
\[
\begin{gathered}
\texttt{python scripts/coarse\_interval\_certificate\_check.py}\\
\texttt{\quad --audit r/perturbation\_certificate\_audit\_q0999999.json}\\
\texttt{\quad --guard 1e-7 --residual-bound 1e-7 --sigma-floor 0.25}\\
\texttt{\quad --input-radius 1e-12}\\
\texttt{\quad --json-out r/coarse\_interval\_certificate\_check\_q0999999\_refined.json},\\
\texttt{python scripts/algebraic\_interval\_certificate\_check.py}\\
\texttt{\quad --cert r/socp\_dual\_certificates\_q0999999\_numeric.json}\\
\texttt{\quad --audit r/perturbation\_certificate\_audit\_q0999999.json}\\
\texttt{\quad --residual-bound 1e-7 --sigma-floor 0.25}\\
\texttt{\quad --json-out r/algebraic\_interval\_certificate\_check\_q0999999\_refined.json}
\end{gathered}
\]
\endgroup
verifies the scalar interval lower endpoints and algebraic matrix checks in Table~\ref{tab:interval-cert}. The summary log \texttt{r/representative\_refined\_interval\_endpoints.json} records the displayed endpoints and safety margins, and \texttt{r/representative\_refinement\_tiers.json} records the $q=0.9999$, $q=0.99999$, and $q=0.999999$ comparison runs.

The raw-order certificate in Table~\ref{tab:raw-order-cert} is reproduced by running the same scripts with the raw-order inputs:
\begingroup\fontsize{6.5}{7.2}\selectfont
\[
\begin{gathered}
\texttt{python scripts/coarse\_interval\_certificate\_check.py}\\
\texttt{\quad --audit r/perturbation\_certificate\_audit\_raw\_order\_q0997.json}\\
\texttt{\quad --guard 1e-5 --residual-bound 1e-8 --sigma-floor 0.25}\\
\texttt{\quad --input-radius 1e-12}\\
\texttt{\quad --json-out r/coarse\_interval\_raw\_order\_q0997\_refined.json},\\
\texttt{python scripts/algebraic\_interval\_certificate\_check.py}\\
\texttt{\quad --spec data/wetzel\_socp\_raw\_order\_models.json}\\
\texttt{\quad --cert r/socp\_dual\_certificates\_raw\_order\_q0997\_numeric.json}\\
\texttt{\quad --audit r/perturbation\_certificate\_audit\_raw\_order\_q0997.json}\\
\texttt{\quad --residual-bound 1e-8 --sigma-floor 0.25}\\
\texttt{\quad --json-out r/algebraic\_interval\_raw\_order\_q0997\_refined.json}.
\end{gathered}
\]
\endgroup

The one-command verifier
\[
\texttt{python scripts/verify\_all\_certificates.py}
\]
checks the representative and raw-order model ledgers, verifies that all stored coarse and algebraic interval reports pass, validates the row-to-placement soundness of Lemma~\ref{lem:row-audit} against both model files, and prints the expected counts $12/12$, $587/587$, $599/599$, and the $4735$ verified audit records. It validates stored certificates and interval reports only; it does not rerun any solver. The script \texttt{scripts/generate\_raw\_order\_models.py} regenerates the $587$ auxiliary raw-order models from the representative model file; \texttt{scripts/check\_chain\_orders.py} verifies the Case~2 and Case~3 order ledger; \texttt{scripts/emit\_forced\_placement\_audit.py} writes \texttt{r/forced\_placement\_audit.csv}; and the standalone command \texttt{python scripts/check\_forced\_placement\_audit.py} re-derives and verifies that audit file (Lemma~\ref{lem:row-audit}) outside the one-command verifier if desired. The supplementary file \texttt{SHA256SUMS.txt} stores the full SHA-256 manifest with Unix line endings, so that \texttt{sha256sum -c SHA256SUMS.txt} verifies the package directly. Table~\ref{tab:hash-manifest} displays hash prefixes regenerated from the shipped manifest at submission time; the supplementary file contains the full SHA-256 digest for every tracked file.

\begin{table}[p]
\centering
\caption{Selected SHA-256 manifest entries for the supplementary certificate package. Full hashes are stored in \texttt{SHA256SUMS.txt}.}\label{tab:hash-manifest}
\scriptsize
\setlength{\tabcolsep}{3pt}
\renewcommand{\arraystretch}{1.08}
\begin{tabularx}{\textwidth}{@{}>{\raggedright\arraybackslash}p{6.1cm}>{\raggedright\arraybackslash}p{2.5cm}>{\raggedright\arraybackslash}X@{}}
\toprule
\textbf{File} & \textbf{SHA-256 prefix} & \textbf{Role} \\
\midrule
\path{nv/data/wetzel_socp_models.json} & \texttt{9b147cf6d32a} & Twelve representative closed SOCP models. \\
\path{nv/data/wetzel_socp_raw_order_models.json} & \texttt{24d3becfdb62} & $587$ auxiliary raw-order models. \\
\path{nv/r/coarse_interval_certificate_check_q0999999_refined.json} & \texttt{950b1e8e5c42} & Refined representative coarse interval endpoints. \\
\path{nv/r/algebraic_interval_certificate_check_q0999999_refined.json} & \texttt{e3a3ca65bd91} & Refined representative algebraic matrix checks. \\
\path{nv/r/representative_refined_interval_endpoints.json} & \texttt{84e17fad9349} & Refined representative endpoint summary. \\
\path{nv/r/representative_refinement_tiers.json} & \texttt{eef530b8149f} & q-radius refinement comparison. \\
\path{nv/r/coarse_interval_raw_order_q0997_refined.json} & \texttt{9c0cfb1ed105} & Refined raw-order coarse interval endpoints. \\
\path{nv/r/algebraic_interval_raw_order_q0997_refined.json} & \texttt{dd30e2e83d24} & Refined raw-order algebraic matrix checks. \\
\path{nv/r/forced_placement_audit.csv} & \texttt{764988b98afb} & Predicate-level forced-placement audit. \\
\path{nv/scripts/check_forced_placement_audit.py} & \texttt{ea2cd15398b6} & Row-to-placement soundness check (Lemma~\ref{lem:row-audit}). \\
\path{nv/scripts/verify_all_certificates.py} & \texttt{f8f445bfcd9b} & One-command verifier, including the forced-placement audit check. \\
\path{SHA256SUMS.txt} & \texttt{581755717697} & Full package hash manifest (Unix line endings). \\
\bottomrule
\end{tabularx}
\end{table}

\section{Proofs of auxiliary lemmas}\label{app:lemmas}

This appendix supplies the proofs of Lemmas~\ref{lem:reduction}, \ref{lem:closed-contact-order}, \ref{lem:closed-order-selection}, and~\ref{lem:reduced-obstruction}, gives a self-contained proof of the imported support-contact unimodality (Theorem~\ref{thm:imported-support-contact}) together with the interleaving consequences used for Proposition~\ref{prop:support-input} and Lemmas~\ref{lem:middle-skeleton} and~\ref{lem:contact-alternatives}, and expands the arguments for Lemmas~\ref{lem:order} and~\ref{lem:wedge}.

\subsection{Complete branch grammar and raw-order certification}\label{app:branch-grammar}

The finite enumeration in Proposition~\ref{prop:case-tree} is generated from the one-sided alternatives in Table~\ref{tab:one-sided-grammar}. The excluded row is not a numerical case: it is ruled out by Lemma~\ref{lem:contact-alternatives} and the closed finite contact-order principle.

\begin{table}[t]
\centering
\caption{One-sided support-role grammar for the $150^\circ$ and $120^\circ$ contacts.}\label{tab:one-sided-grammar}
\scriptsize
\setlength{\tabcolsep}{4pt}
\renewcommand{\arraystretch}{1.1}
\begin{tabularx}{\textwidth}{@{}llllX@{}}
\toprule
\textbf{Side} & \textbf{Type} & \textbf{$150^\circ$ role} & \textbf{$120^\circ$ role} & \textbf{Status} \\
\midrule
positive & $A$ & $P_1$ & $P_2$ & kept; both contacts are inner. \\
positive & $B$ & $P_1$ & $P_7$ & kept; $150^\circ$ is inner and $120^\circ$ is outer. \\
positive & $C$ & $P_8$ & $P_7$ & kept; both contacts are outer. \\
positive & -- & $P_8$ & $P_2$ & excluded; an inner $120^\circ$ contact forces the $150^\circ$ contact to be inner. \\
negative & $A'$ & $P_{-1}$ & $P_{-2}$ & reflected copy of $A$. \\
negative & $B'$ & $P_{-1}$ & $P_{-7}$ & reflected copy of $B$. \\
negative & $C'$ & $P_{-8}$ & $P_{-7}$ & reflected copy of $C$. \\
negative & -- & $P_{-8}$ & $P_{-2}$ & excluded by reflection. \\
\bottomrule
\end{tabularx}
\end{table}

\begin{table}[t]
\centering
\caption{Certified branch families after the one-sided grammar.}\label{tab:raw-grammar}
\scriptsize
\setlength{\tabcolsep}{4pt}
\renewcommand{\arraystretch}{1.1}
\begin{tabularx}{\textwidth}{@{}p{2.8cm}p{2.0cm}p{2.2cm}X@{}}
\toprule
\textbf{Raw role combination} & \textbf{Branch family} & \textbf{Certified models} & \textbf{Treatment} \\
\midrule
$A'A$ & Case~1.1 & 3 & Split by the two one-sided $T_W$ alternatives and the direct both-late model in Lemma~\ref{lem:subcase11-tail-reduction}. \\
$A'B$ and $B'A$ & Case~1.2 & 1 & Reflected copies; one representative model is kept. \\
$B'B$ & Case~1.3 & 1 & Already fixed after the one-sided grammar. \\
$A'C$ and $C'A$ & Case~2.1 & 6 & Reflection fixes one outer side; all six right-tail orders of Lemma~\ref{lem:case2-order-reduction} are certified. \\
$B'C$ and $C'B$ & Case~2.2 & 12 & The two reflected left-tail orders are paired with all six right-tail orders. \\
$C'C$ & Case~3 & 576 & All $24$ right-tail orders are paired with all $24$ reflected left-tail orders. \\
\midrule
\textbf{Total} & & \textbf{599} & Twelve representative models plus $587$ raw-order models. \\
\bottomrule
\end{tabularx}
\end{table}

Table~\ref{tab:raw-certificate-routing} records where the raw tail orders are certified. A row marked ``representative'' is certified by the corresponding closed SOCP model in Table~\ref{tab:interval-cert}; a row marked ``raw'' is certified by the corresponding model in the raw-order JSON file and by Table~\ref{tab:raw-order-cert}. No row in this table is justified by replacing it with a different tail order.

\begin{table}[p]
\centering
\caption{Certificate routing for raw tail-order rows.}\label{tab:raw-certificate-routing}
\scriptsize
\setlength{\tabcolsep}{3pt}
\renewcommand{\arraystretch}{1.08}
\begin{tabularx}{\textwidth}{@{}p{1.25cm}p{3.0cm}X@{}}
\toprule
\textbf{Row} & \textbf{Raw order} & \textbf{Certificate source} \\
\midrule
C2-1 & $P_3P_4P_7P_8$ & Representative models 2.1a and 2.2a; with reflected Subcase~2.2 left order, raw model \texttt{2.2-leftalt-C2-1}. \\
C2-2 & $P_3P_8P_7P_4$ & Representative models 2.1b and 2.2b; with reflected Subcase~2.2 left order, raw model \texttt{2.2-leftalt-C2-2}. \\
C2-3 & $P_3P_4P_8P_7$ & Raw models \texttt{2.1-C2-3}, \texttt{2.2-C2-3}, and \texttt{2.2-leftalt-C2-3}. \\
C2-4 & $P_3P_7P_4P_8$ & Raw models \texttt{2.1-C2-4}, \texttt{2.2-C2-4}, and \texttt{2.2-leftalt-C2-4}. \\
C2-5 & $P_3P_7P_8P_4$ & Raw models \texttt{2.1-C2-5}, \texttt{2.2-C2-5}, and \texttt{2.2-leftalt-C2-5}. \\
C2-6 & $P_3P_8P_4P_7$ & Raw models \texttt{2.1-C2-6}, \texttt{2.2-C2-6}, and \texttt{2.2-leftalt-C2-6}. \\
\midrule
Case~3 & $R_1,\ldots,R_{24}$ are the $24$ strict orders of $P_4,P_5,P_7,P_8$; $L_1,\ldots,L_{24}$ are their reflected left-tail orders. & The pairings $(L_1,R_1)$, $(L_1,R_{24})$, and $(L_{24},R_{24})$ are representative models 3a--3c. The other $573$ pairings in $\{L_1,\ldots,L_{24}\}\times\{R_1,\ldots,R_{24}\}$ are the Case~3 raw-order models in the raw-order certificate logs. \\
\bottomrule
\end{tabularx}
\end{table}

\begin{table}[p]
\centering
\caption{Branch-to-model ledger. Concrete model ids are the keys in the model JSON files \texttt{wetzel\_socp\_models.json} and \texttt{wetzel\_socp\_raw\_order\_models.json}; the listed log files certify exactly those ids.}\label{tab:branch-model-ledger}
\fontsize{7}{7.4}\selectfont
\setlength{\tabcolsep}{2pt}
\renewcommand{\arraystretch}{1.08}
\begin{tabularx}{\textwidth}{@{}p{1.55cm}p{2.55cm}p{2.7cm}X p{2.7cm}@{}}
\toprule
\textbf{Branch} & \textbf{Model ids} & \textbf{Chain source} & \textbf{Active predicate pattern} & \textbf{Certificate log key} \\
\midrule
1.1 & \texttt{1.1a}, \texttt{1.1b}, \texttt{1.1c} & Table~\ref{tab:cert-bounds} and Table~\ref{tab:wedge-audit} & \texttt{condL}, \texttt{condR}, \texttt{condSfarL}, \texttt{condTfarR}, wedge incidences, and recorded wedge escapes or \texttt{below\_line}. & Refined representative logs \path{coarse_interval_certificate_check_q0999999_refined.json} and \path{algebraic_interval_certificate_check_q0999999_refined.json}. \\
1.2--1.3 & \texttt{1.2}, \texttt{1.3} & Table~\ref{tab:cert-bounds} & floor incidences \texttt{onlineL}, \texttt{onlineR} and escapes \texttt{farL}, \texttt{farR}. & Refined representative $q=0.999999$ logs. \\
2.1 & \texttt{2.1a}, \texttt{2.1b}; \texttt{2.1-C2-3}--\texttt{2.1-C2-6} & C2 rows in Table~\ref{tab:raw-certificate-routing} & \texttt{onlineL}, \texttt{onlineR}, \texttt{farL}, \texttt{farR}, \texttt{onlineV}, \texttt{farV}. & Refined representative $q=0.999999$ logs for C2-1--C2-2; refined raw-order $q=0.997$ logs for C2-3--C2-6. \\
2.2 & \texttt{2.2a}, \texttt{2.2b}; \texttt{2.2-C2-3}--\texttt{2.2-C2-6}; \texttt{2.2-leftalt-C2-1}--\texttt{2.2-leftalt-C2-6} & C2 rows plus the two left-tail orders in Proposition~\ref{prop:case-tree} & \texttt{onlineL}, \texttt{onlineR}, \texttt{farL}, \texttt{farR}, and \texttt{condV}. & Refined representative $q=0.999999$ logs for displayed rows; refined raw-order $q=0.997$ logs for the other ten ids. \\
3 & \texttt{3a}, \texttt{3b}, \texttt{3c}; \texttt{3all-L\#\#-R\#\#} for the other $573$ pairs & $24\times24$ order grid in Table~\ref{tab:raw-certificate-routing} & \texttt{onlineL}, \texttt{onlineR}, \texttt{farL}, \texttt{farR}, \texttt{onlineU}, \texttt{onlineV}, \texttt{farU}, \texttt{farV}. & Refined representative $q=0.999999$ logs for \texttt{3a--3c}; refined raw-order $q=0.997$ logs for all other pairings. \\
\bottomrule
\end{tabularx}
\end{table}

The Subcase~1.1 $T_W$ routing is summarized in Table~\ref{tab:wedge-audit}. Degenerate-kink deletion is used only in rows where the nearest vertical delimiter itself satisfies the closed escape row; the both-late row is certified directly by the additional model 1.1c. In Cases~2 and~3, the nonrepresentative raw orders are not routed into representative orders by endpoint replacement; they are certified directly by the additional SOCP models in Table~\ref{tab:raw-order-cert}. Thus the proof of Proposition~\ref{prop:case-tree} requires the Case~2 raw-order enumeration and the exhaustive Case~3 all-order enclosure in Lemmas~\ref{lem:case2-order-reduction} and~\ref{lem:case3-order-reduction}.

\begin{table}[p]
\centering
\caption{Subcase~1.1 routing for late $T_W$ tail contacts.}\label{tab:wedge-audit}
\scriptsize
\setlength{\tabcolsep}{3pt}
\renewcommand{\arraystretch}{1.10}
\begin{tabularx}{\textwidth}{@{}p{2.1cm}p{2.1cm}p{1.8cm}X@{}}
\toprule
\textbf{Left delimiter} & \textbf{Right delimiter} & \textbf{Certified model} & \textbf{Witness assignment and preserved or dropped rows} \\
\midrule
satisfies escape row & satisfies escape row & 1.1a & Both later kinks may be deleted to the nearest vertical delimiters. The model keeps the two middle side-incidence rows \texttt{onlineWl[p[-2.5]]}, \texttt{onlineWr[p[2.5]]} and the two closed escape rows at $P_{-3.5}$ and $P_{3.5}$, allowing the delimiter coincidences $P_{-3.5}=P_{-6}$ and $P_{3.5}=P_6$. \\
late contact retained & satisfies escape row & 1.1b & The left late witness is $P_{-6.5}$ and the row \texttt{onlineWl[p[-6.5]]} is kept. The right delimiter-side escape row $d(\overrightarrow{WP_{3.5}},-75^\circ)\ge b$ is kept, and the nonactive side-containment row for $P_{-2}$ is kept. The absent right late-side incidence is not imposed. \\
satisfies escape row & late contact retained & reflected 1.1b & The reflected statement keeps the right late witness $P_{6.5}$, keeps the left delimiter-side escape row, and omits the absent left late-side incidence. \\
late contact retained & late contact retained & 1.1c & Both late witnesses are kept. The direct SOCP model uses the chain through $P_{-6.5},P_{-6}$ and $P_6,P_{6.5}$, keeps \texttt{onlineWl[p[-6.5]]} and \texttt{onlineWr[p[6.5]]}, and enforces the two late escape rows $d(\overrightarrow{WP_{-6.5}},-105^\circ)\ge b$ and $d(\overrightarrow{WP_{6.5}},-75^\circ)\ge b$. No pre-delimiter opposite-side escape row is substituted. \\
\bottomrule
\end{tabularx}
\end{table}

As a worked illustration of the wedge routing at model-row level, consider the representative model 1.1a, whose JSON constraint list is reproduced in Table~\ref{tab:cert-bounds}. The wedge variables are the anchor $W=(x_W,y_W)$ and the contacts $P_{\pm2.5}$ and $P_{\pm3.5}$. Constraints~5 and~6 are the two side-incidence equalities
\[
y_{-2.5}-y_W=\tan75^\circ\,(x_{-2.5}-x_W),
\qquad
y_{2.5}-y_W=\tan(-75^\circ)\,(x_{2.5}-x_W),
\]
and constraints~7 and~8 are the two closed wedge escape rows
\[
\begin{gathered}
\cos75^\circ\,(x_{3.5}-x_W)-\sin75^\circ\,(y_{3.5}-y_W)\ \ge\ b,\\
-\cos75^\circ\,(x_{-3.5}-x_W)-\sin75^\circ\,(y_{-3.5}-y_W)\ \ge\ b,
\end{gathered}
\]
the affine forms of $d(\overrightarrow{WP_{3.5}},-75^\circ)\ge b$ and $d(\overrightarrow{WP_{-3.5}},-105^\circ)\ge b$. The degenerate delimiter choice of the first routing row of Table~\ref{tab:wedge-audit} is represented in this closed model by the absence of any separation constraint between $P_{3.5}$ and $P_6$ (and between $P_{-3.5}$ and $P_{-6}$): the chain order allows the coincidences $P_{3.5}=P_6$ and $P_{-3.5}=P_{-6}$, in which case the corresponding chain segments have length zero and the escape rows above are inherited verbatim by the delimiters. This is exactly the configuration produced by the kink deletion of Lemma~\ref{lem:subcase11-tail-reduction}, and it is feasible for the certified model, so the certified lower endpoint $1.0048290490\ldots$ of Table~\ref{tab:interval-cert} covers the degenerate branch as well.

\subsection{Compactness and reduction proofs}\label{app:compactness-proofs}

\begin{proof}[Proof of Lemma~\ref{lem:reduction}]
Suppose that $C$ fails to cover a unit arc. By the Wetzel--Wichiramala polygonal reduction~\cite[Cor.~5]{wetzel2010}, $C$ fails to cover some simple polygonal unit arc $\alpha$. If a compact convex set $K$ satisfies $\conv(\alpha)\subseteq K$ and a congruent copy of $C$ contains $K$, then the same copy contains $\alpha$, a contradiction. Thus every enlarged hull considered below remains a noncovered obstruction hull.

Take $\beta_m=\alpha$ for all $m$ for the present lemma; in the normalized reduction below we instead apply the same compactness argument after selecting a subsequence on which the finite witness-role data stabilize. Put
\[
K_m=\conv(\beta_m).
\]
Every $K_m$ has diameter at most $1$, because $\operatorname{diam}(\beta_m)\le \ell(\beta_m)\le1$. After one fixed translation, the sequence lies in a common closed ball.
By Blaschke's selection theorem, after passing to a subsequence we may assume that $K_m$ converges in the Hausdorff metric to a compact convex set $K^*$. Since $\conv(\alpha)\subseteq K_m$ for every $m$, we also have $\conv(\alpha)\subseteq K^*$. Continuity of support functions also gives convergence of the support faces in every fixed direction used later.

Choose arclength parametrizations of the polygonal arcs realizing $K_m$, extended constantly to $[0,1]$ if their length is less than one. These maps are uniformly bounded and $1$-Lipschitz. By Arzel\`a--Ascoli, a subsequence converges uniformly to a $1$-Lipschitz rectifiable curve $\gamma_0$. Uniform convergence of compact images implies Hausdorff convergence of the images, and taking convex hulls is continuous for the Hausdorff metric in the plane; hence
\[
\conv(\gamma_0)=K^*.
\]
Lower semicontinuity gives $\ell(\gamma_0)\le 1$.

The limiting curve $\gamma_0$, together with the subsequential limits of the finitely many selected support-role witnesses used in the case tree, is a weak obstruction in the sense of Definition~\ref{def:weak-obstruction}. It is noncovered by $C$: if a congruent copy of $C$ contained $K^*=\conv(\gamma_0)$, then it would contain $\conv(\alpha)$ and hence $\alpha$, contrary to the choice of $\alpha$.

This proves the weak-hull reduction without requiring a congruence-dependent support-functional maximality, loop erasure, or padding to a genuine unit arc.
\end{proof}

\begin{proof}[Proof of Lemma~\ref{lem:closed-contact-order}]
For each of the finitely many witness directions and named intervals, compactness of $[0,1]$ gives a common subsequence on which all selected parameters $t_{n,j}$ converge to limits $t_j$. Since the intervals $I_j$ are closed, each $t_j$ remains in its prescribed interval.

Hausdorff convergence of compact convex sets implies uniform convergence of support functions on $S^1$. Hence, for every fixed support normal $u_j$,
\[
h_{K_n}(u_j)\to h_K(u_j).
\]
Uniform convergence of the parametrized arcs gives $\gamma_n(t_{n,j})\to\gamma(t_j)$, so the exact support-contact identity for the witnesses passes to the limit:
\[
\langle\gamma(t_j),u_j\rangle
=
\lim_{n\to\infty}\langle\gamma_n(t_{n,j}),u_j\rangle
=
\lim_{n\to\infty}h_{K_n}(u_j)
=h_K(u_j).
\]
Thus $P_j=\gamma(t_j)$ lies on the corresponding support face of $K$.

Finally, each recorded weak order relation is a closed scalar inequality. If $t_i^{(n)}\le t_j^{(n)}$ for all $n$ in the selected subsequence, then $t_i\le t_j$. Additional support contacts may appear in the limit, but they are not part of the stored witness data and do not remove any feasible closed SOCP realization.
\end{proof}

\begin{proof}[Proof of Lemma~\ref{lem:closed-order-selection}]
Fix $K$ and one stabilized finite weak role assignment $\mathcal R$. By hypothesis there is at least one sequence of simple polygonal arcs realizing this same finite assignment and converging to $K$, so $\mathcal A(K,\mathcal R)$ is nonempty. All curves in $\mathcal A(K,\mathcal R)$ are $1$-Lipschitz after constant-speed parametrization on $[0,1]$, with constant tails if necessary, and their images lie in a common compact ball because $\operatorname{diam}K\le1$. Arzel\`a--Ascoli therefore gives compactness for uniformly convergent subsequences.

To see explicitly that the role-witnessed condition is closed, let $\eta_m\to\eta$ uniformly with $\eta_m\in\mathcal A(K,\mathcal R)$. For each $m$, choose a simple polygonal witness $\pi_{m,n(m)}$ realizing $\mathcal R$ and satisfying
\[
  \|\pi_{m,n(m)}-\eta_m\|_\infty < 1/m .
\]
Then $\pi_{m,n(m)}\to\eta$ uniformly and the diagonal witness sequence realizes the same stabilized finite role assignment $\mathcal R$. Equality $\conv(\eta)=K$ follows because $\conv(\eta_m)=K$ and convex hulls vary continuously under uniform, hence Hausdorff, convergence of the images.

The finite role data remain closed under this convergence. Equality of convex hulls is equality of support functions, and support functions are continuous under Hausdorff convergence of convex hulls. The witness contacts are selected in fixed line or sweep directions and fixed closed intervals, so Lemma~\ref{lem:closed-contact-order} passes them to contacts on the corresponding limiting exposed faces. Each weak role incidence is therefore a closed statement about a stored witness parameter lying in a fixed closed interval and on a fixed exposed face of $K$. Thus $\mathcal A(K,\mathcal R)$ is closed. Lower semicontinuity of arclength on uniformly convergent equi-Lipschitz curves gives a length-minimizing representative $\eta_*\in\mathcal A(K,\mathcal R)$. By the first bullet in the definition of $\mathcal A(K,\mathcal R)$, this minimizer itself is obtained as a uniform limit of simple polygonal arcs realizing the same stabilized role data; we apply Wichiramala's endpoint-contact theorem to that approximating sequence.

For every simple polygonal arc in the stabilized approximating sequences, Wichiramala's endpoint-contact parameter
\[
T(\theta)=\{\min C_\theta,\max C_\theta\}
\]
has monotone increasing and then monotone decreasing graph over a full period; this is Theorem~\ref{thm:imported-support-contact}, proved self-containedly in Appendix~\ref{app:unimodality} following \cite[Lemmas~1--3]{wichiramala2019a}. Together with the cyclic exposure order (Lemma~\ref{lem:cyclic-exposure}) and the interleaving consequences (Corollary~\ref{cor:role-trichotomy}), the selected contacts of the polygonal representatives occur in weak normal-fan order on each exposed boundary chain. The weak role data involve only finitely many directions and finitely many named subarcs. Applying Lemma~\ref{lem:closed-contact-order} to this finite witness list shows that the limiting contacts of the length-minimal weak representative retain the same weak normal-fan order. If two limiting contacts coincide or a whole exposed segment is present, the witness convention records exactly that weak equality.
\end{proof}

\begin{proof}[Proof of Lemma~\ref{lem:reduced-obstruction}]
Suppose that $T$ does not cover a unit arc. Apply Lemma~\ref{lem:reduction} with $C=T$. We obtain a noncovered weak obstruction $\gamma_0$ of length at most one with limiting hull $K=\conv(\gamma_0)$.

The horizontal $\Lambda$-configuration is selected on the defining simple polygonal approximants, where the simple-arc $\Lambda$-property applies. For each approximant choose one such $\Lambda$-frame and apply a rigid motion sending the selected lower support line to $y=0$, the upper support line to $y=H_n$, and the first lower contact $P_{-3}^{(n)}$ to the origin; reflect if necessary so that the second lower contact has nonnegative $x$-coordinate. These normalized approximants still have length at most one and have hull diameter at most one, so their images and normalized hulls lie in a common compact ball. The rotation angles have a convergent subsequence, the reflection choice is finite, and the translation is fixed by the anchoring $P_{-3}^{(n)}=0$ after normalization; therefore, after passing to a further subsequence, the normalizing rigid motions converge to a limiting congruence. Passing to this subsequence, the normalized curves converge uniformly and the normalized hulls converge in the Hausdorff metric to the image of the unnormalized limiting hull under that congruence; we denote this congruent limiting hull by $K=\conv(\gamma_0)$. Noncoverage by $T$ and arclength are invariant under the normalizing rigid motions and pass to the closed Hausdorff limit. No support-functional maximality is used after this normalization; only the existence of the noncovered limiting hull and its stabilized role data is needed.

Since there are only finitely many special-angle role assignments in the case tree, a further subsequence fixes the weak support-role assignment and the selected delimiter witnesses. Lemma~\ref{lem:closed-contact-order} then supplies the limiting support contacts on the normalized weak limit and preserves the finite weak order relations.

Among weak obstructions of length at most one that realize the same convex hull $K$ and the same weak support-role data, choose one of minimal length in the class $\mathcal A(K,\mathcal R)$ defined above. Existence follows from arclength parametrization, Arzel\`a--Ascoli compactness in a fixed ball, closedness of $\mathcal A(K,\mathcal R)$, and lower semicontinuity of length.

By Lemma~\ref{lem:closed-order-selection}, among the length minimizers for this hull and weak role assignment there is one whose exposed-boundary contacts are in weak normal-fan order.

The selected obstruction is therefore weak support-reduced in the sense used below. Its length is still at most one, and it is still not covered by $T$ because its convex hull is the same noncovered hull.
\end{proof}

\subsection{Support-contact unimodality and interleaving: self-contained proofs}\label{app:unimodality}

This subsection proves Theorem~\ref{thm:imported-support-contact} and the interleaving consequences used in Proposition~\ref{prop:support-input} and Lemma~\ref{lem:contact-alternatives}. The statements and the overall strategy follow Wichiramala~\cite{wichiramala2019a}; we include complete arguments so that the paper does not rely on the unpublished source. Throughout, $\gamma:[0,\ell]\to\R^2$ is a simple polygonal arc that is not a line segment, $H=\conv(\gamma)$ (a convex polygon with nonempty interior), and contacts are taken with the oriented support lines $L_\theta$ of Section~\ref{subsec:support-distance}.

We use two elementary facts. The first is the standard normal-fan statement that exposure directions order contacts cyclically along $\partial H$.

\begin{lemma}[Cyclic exposure order]\label{lem:cyclic-exposure}
Let $X_1,X_2,X_3,X_4\in\partial H$ be four distinct points, exposed by support lines at directions $\theta_1<\theta_2<\theta_3<\theta_4<\theta_1+2\pi$ respectively (that is, $X_i\in L_{\theta_i}$). Then $X_1,X_2,X_3,X_4$ occur in this cyclic order along $\partial H$, with the counterclockwise orientation matching increasing direction.
\end{lemma}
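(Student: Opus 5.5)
The plan is to prove Lemma~\ref{lem:cyclic-exposure} directly from the structure of the convex polygon $H$ and the monotonicity of the normal map along its boundary. Parametrize $\partial H$ counterclockwise. Each boundary point $X$ has a normal cone $N(X)$ of outward support normals. In our convention these are the vectors $n_\theta$ with $X\in L_\theta$. Because $n_\theta$ is $e_\theta$ rotated clockwise by $90^\circ$, the map $\theta\mapsto n_\theta$ is a rotation. Increasing $\theta$ therefore rotates $n_\theta$ counterclockwise. So it suffices to show that the outward normals of a convex polygon turn monotonically counterclockwise as one traverses $\partial H$ counterclockwise. This is the standard normal-fan fact: the cones $N(X)$ of the vertices, together with the single normals of the edges, tile $S^1$ in the same cyclic order as the vertices and edges occur along $\partial H$.

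The key steps, in order, are as follows.

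\emph{Step 1.} List the vertices $V_1,\dots,V_k$ of $H$ counterclockwise. Let $\nu_i$ be the outward unit normal of the edge $[V_i,V_{i+1}]$. Then $\nu_1,\dots,\nu_k$ occur in strictly counterclockwise order on $S^1$, and their total turning is exactly $2\pi$. The exterior angles are positive and sum to $2\pi$ for a convex polygon with nonempty interior; nonempty interior holds because $\gamma$ is not a segment.

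\emph{Step 2.} Identify the normal cones. We have $N(V_i)$ equal to the closed arc from $\nu_{i-1}$ to $\nu_i$, and $N(X)=\{\nu_i\}$ for $X$ in the relative interior of an edge. Consequently, for each direction $\theta$, the face $L_\theta\cap H$ is the point or edge whose normal cone contains $n_\theta$.

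\emph{Step 3.} Given distinct $X_1,\dots,X_4$ with $n_{\theta_j}\in N(X_j)$ and $\theta_1<\dots<\theta_4<\theta_1+2\pi$, use Step~2 to place each $X_j$ at a cyclic position along $\partial H$ whose normal cone contains $n_{\theta_j}$. Since the assignment from boundary points to normal cones is monotone (Step~1), the cyclic order of the $n_{\theta_j}$ on $S^1$ forces the same cyclic order of the points on $\partial H$.

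The main obstacle is the non-injectivity at edges. Two distinct points $X_i,X_j$ can lie on the same edge and share the normal $\nu$. A single vertex could also be exposed by two of the $\theta$'s, but distinctness of the points excludes that. If $X_i$ and $X_j$ share an edge, then $\theta_i=\theta_j$ modulo $2\pi$, which is ruled out by the strict inequalities together with $\theta_4<\theta_1+2\pi$. Hence the $X_j$ lie in pairwise distinct faces or cones, and the monotone correspondence is strictly order-preserving.

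I would make this rigorous by defining a weakly monotone, degree-one "Gauss map" from $\partial H$ to $S^1$, set-valued at vertices. A degree-one weakly monotone circle map preserves cyclic order of any points whose images are distinct. That finishes the lemma, and it matches the counterclockwise orientation with increasing direction as stated.
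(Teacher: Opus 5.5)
Your proposal is correct and follows the same normal-fan sweep argument as the paper. The paper simply asserts that, as $\theta$ increases, the exposed faces $H\cap L_\theta$ sweep $\partial H$ once counterclockwise; your Steps~1--3 and the weakly monotone degree-one Gauss map make that assertion explicit. One small imprecision: two points on a common edge need not satisfy $\theta_i\equiv\theta_j \pmod{2\pi}$, since an endpoint vertex can be exposed at a direction other than the edge normal. Your final argument, that a monotone map preserves the cyclic order of distinct points with distinct chosen normals, already covers that case, so nothing breaks.
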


\begin{proof}
For a compact convex set, the exposed face in direction $\theta$ is the intersection of $H$ with $L_\theta$, and as $\theta$ increases by $2\pi$ the outward normal $n_\theta$ rotates once counterclockwise; the corresponding exposed faces sweep $\partial H$ once in the counterclockwise order of the normal fan. Distinct points exposed at distinct directions therefore inherit the cyclic order of their directions.
\end{proof}

\begin{lemma}[Weak cyclic exposure with a repeated support face]\label{lem:cyclic-exposure-same-face}
Let $F=H\cap L_\theta$ be an exposed face and let $X_1,X_2\in F$ be selected in their counterclockwise boundary order along $F$. If $Y_1,\ldots,Y_m$ are selected exposed contacts whose support directions follow $\theta$ in the normal-fan order and precede the next exposure of $F$, then the boundary order obtained in Lemma~\ref{lem:cyclic-exposure} extends weakly as
\[
X_1,\ X_2,\ Y_1,\ldots,Y_m .
\]
Equivalently, the same order is obtained by perturbing the common support direction within the normal cone of $F$ and then taking the limit. In the normalized $\Lambda$-position this applies to the two lower contacts $P_{-3},P_3\in L_0$, ordered by $x(P_{-3})\le x(P_3)$.
\end{lemma}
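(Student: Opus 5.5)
The plan is to reduce the repeated-face case to the strict cyclic order of Lemma~\ref{lem:cyclic-exposure} by a perturbation argument, and then pass to the limit using the fact that weak boundary order is a closed condition.

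First I would fix the geometry of the face. The set $F=H\cap L_\theta$ is either a point or a segment of $\partial H$. Its outward normal cone is a closed arc of directions $[\theta^-,\theta^+]$ containing $\theta$; it is degenerate, $\theta^-=\theta^+=\theta$, exactly when $F$ is a segment. Traversing $\partial H$ counterclockwise, $F$ is met as a segment from an initial endpoint $F^-$ to a terminal endpoint $F^+$. The support directions slightly larger than the normal cone of $F$ expose either $F^+$ (if $F^+$ is a vertex with a nontrivial normal cone) or the edge immediately after $F$. In either case this boundary piece lies counterclockwise after all of $F$.

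Next I would handle $X_1,X_2$ and the $Y_j$. Since $X_1,X_2$ are chosen in counterclockwise order on $F$, they satisfy $X_1\preceq X_2\preceq F^+$. Each $Y_j$ has a support direction $\theta_j$ strictly after $\theta$ and before the next exposure of $F$, so $Y_j$ lies on the boundary arc from $F^+$ onward. Applying Lemma~\ref{lem:cyclic-exposure} to the $Y_j$ gives their mutual cyclic order. To compare them with $X_2$, I would replace $X_2$ by $F^+$ exposed at the direction $\theta+\varepsilon$ (or $\theta^+ +\varepsilon$), which is less than every $\theta_j$ for small $\varepsilon$. Then Lemma~\ref{lem:cyclic-exposure} gives the strict order
\[
F^+,\ Y_1,\ \ldots,\ Y_m,
\]
and since $X_2\preceq F^+$ on $F$, we obtain $X_1,X_2,Y_1,\ldots,Y_m$. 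Where points coincide, for instance $Y_1=F^+$, the order is read as a weak equality. The parenthetical equivalence in the statement is exactly this device: perturbing the direction within or just past the normal cone of $F$ and letting $\varepsilon\to0$. The limit is legitimate because weak cyclic order along $\partial H$ is closed under convergence of points.

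Finally, the normalized application follows directly. The lower support $L_0$ is the horizontal line $y=0$, and $\gamma$ lies above it. Counterclockwise traversal of the bottom face of $H$ goes from left to right, so $x(P_{-3})\le x(P_3)$ is exactly counterclockwise order on $F$. I expect the main obstacle to be bookkeeping rather than mathematics. One must make sure the phrase ``precede the next exposure of $F$'' excludes $Y_j$ that wrap around past $F^-$, and treat the degenerate cases where $F$ is a single point ($X_1=X_2$) or where some $Y_j$ equals $F^+$. These are the cases I would check explicitly.
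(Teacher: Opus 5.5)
Your proposal is correct and follows essentially the same route as the paper: perturb the support direction off $\theta$ so that Lemma~\ref{lem:cyclic-exposure} applies to an exposed endpoint of $F$, order $X_1,X_2$ by the usual order along the segment $F$, and record coincidences as weak equalities. The only difference is that the paper perturbs to both sides of the normal cone, exposing $F^-$ and $F^+$, which places $X_1,X_2$ before the $Y_j$ cyclically without the separate wrap-around check you flag. One small correction: in the point case your ``$\theta^{+}+\varepsilon$'' should simply be $\theta$, since $F^+=F$ is already exposed there and $\theta^{+}+\varepsilon$ exposes a later vertex.
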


\begin{proof}
Along an exposed segment of a convex polygon, the counterclockwise boundary order is the usual order on that segment. Perturbing the support direction slightly to the two sides of the normal cone of $F$ exposes the endpoints of $F$ in this same boundary order, while all contacts exposed by directions outside the normal cone retain their normal-fan order. Letting the perturbation tend to the original direction gives the stated weak order; if $X_1=X_2$ or $F$ degenerates to a point, the assertion is just the corresponding weak equality.
\end{proof}

The second fact is the planarity obstruction that powers all order and exclusion statements below.

\begin{lemma}[Interleaving exclusion]\label{lem:interleaving}
Let $I=[a_1,a_2]$ and $J=[b_1,b_2]$ be parameter intervals with disjoint interiors and $\{a_1,a_2\}\cap\{b_1,b_2\}=\emptyset$, and suppose the four points
\[
\gamma(a_1),\ \gamma(a_2),\ \gamma(b_1),\ \gamma(b_2)\in\partial H
\]
are distinct and the pair $\{\gamma(a_1),\gamma(a_2)\}$ separates the pair $\{\gamma(b_1),\gamma(b_2)\}$ on $\partial H$: each open boundary arc determined by $\gamma(a_1),\gamma(a_2)$ contains exactly one of $\gamma(b_1),\gamma(b_2)$. Then the subarcs $\gamma|_I$ and $\gamma|_J$ intersect. In particular, since $\gamma$ is simple and the parameter intervals are disjoint, this configuration cannot occur.
\end{lemma}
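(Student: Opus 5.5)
The plan is a Jordan-curve separation argument inside the convex polygon $H$; the final assertion then follows at once from simplicity. First consider the case $\operatorname{int}I\cap\operatorname{int}J\neq\emptyset$ excluded. Suppose, for contradiction, that $\gamma(I)\cap\gamma(J)=\emptyset$. Write $A_1=\gamma(a_1)$, $A_2=\gamma(a_2)$, $B_1=\gamma(b_1)$, $B_2=\gamma(b_2)$. All four points lie on $\partial H$, and the whole arc lies in $H$.

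The first step is to build a crosscut of the closed disk $H$ from $\gamma|_I$. The subarc $\gamma|_I$ is a simple arc in $H$ joining the boundary points $A_1$ and $A_2$. It may touch $\partial H$ at other points, so I would not use it directly. Instead I would use the elementary planar fact in its "touching allowed" form. Let $\beta_1,\beta_2$ be the two closed boundary arcs of $\partial H$ from $A_1$ to $A_2$; by hypothesis $B_1$ lies in the open arc $\beta_1^\circ$ and $B_2$ in $\beta_2^\circ$. Consider the simple closed curve $\Gamma=\gamma(I)\cup\beta_1$. This may fail to be simple where $\gamma(I)$ meets $\beta_1$. To avoid that, replace $\gamma(I)$ by a first-exit/last-entry decomposition: the set $\gamma(I)\cap\partial H$ is compact, and $H\setminus\gamma(I)$ splits into components. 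The cleanest route is the standard lemma that, in a closed topological disk $D$, a connected set $S$ containing two boundary points $A_1,A_2$ separates, within $D$, any two points of $\partial D\setminus S$ lying on opposite boundary arcs of $\{A_1,A_2\}$. This is a consequence of Janiszewski's theorem, or equivalently of the $\theta$-curve theorem applied after attaching an exterior arc.

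The second step makes that lemma concrete, and this is the main obstacle. Attach an auxiliary arc $\delta$ outside $H$ from $B_1$ to $B_2$, so that $\delta\cap H=\{B_1,B_2\}$; this is possible because $H$ is convex with nonempty interior. Also attach an exterior arc $\epsilon$ from $A_1$ to $A_2$ meeting $H$ only at its endpoints, chosen so that $\epsilon$ crosses $\delta$ exactly once transversally. The separation hypothesis on $\partial H$ is exactly what lets $\epsilon$ and $\delta$ be chosen this way: going around outside $H$, one can take them to be radial-out, circular, radial-in paths at different radii, whose angular intervals interlace. Now $\gamma(I)\cup\epsilon$ contains a closed curve $C$, and $\gamma(J)\cup\delta$ is a path from $B_1$ to $B_2$. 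If $\gamma(I)$ is not simple-closed with $\epsilon$, I would pass to the simple arc inside the path, which is available because $\gamma$ is injective, so $\gamma(I)\cup\epsilon$ is a simple closed curve directly. The path $\gamma(J)\cup\delta$ meets $C$ only in the single transversal crossing with $\epsilon$, unless $\gamma(J)$ meets $\gamma(I)$. A single transversal crossing changes the inside/outside status, so $B_1$ and $B_2$ lie in different components of $\R^2\setminus C$. Since $\gamma(J)$ is a connected set joining $B_1$ to $B_2$, it must then meet $C$. It cannot meet $\epsilon$, because $\epsilon$ lies outside $H$ apart from $A_1,A_2\notin\gamma(J)$. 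Hence it meets $\gamma(I)$.

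Finally, the intersection point $\gamma(s)=\gamma(t)$ with $s\in I$, $t\in J$ forces $s=t$ by simplicity. But the intervals have disjoint interiors and share no endpoints, so they are disjoint, which is a contradiction. I expect only the parity/crossing bookkeeping in the second step to need care; one could substitute the polygonal structure to make the crossing count fully elementary.
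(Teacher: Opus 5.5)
Your argument is correct and is essentially the paper's proof. You close $\gamma|_I$ into a Jordan curve with an exterior detour around $H$: the paper uses a dilated copy of one boundary arc with radial connectors, you use a radial--circular--radial path. You then use a single transversal crossing to put $\gamma(b_1)$ and $\gamma(b_2)$ in different complementary components, and conclude that the connected set $\gamma(J)$, which cannot meet the detour, must meet $\gamma(I)$.

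The paper detects the two sides with rays from an interior point out to infinity, while you use an exterior arc $\delta$ joining $\gamma(b_1)$ to $\gamma(b_2)$. The path that crosses $C$ exactly once is $\delta$ itself, not $\gamma(J)\cup\delta$, which is a closed loop. Your first step through Janiszewski's theorem is not needed.
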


\begin{proof}
Write $\alpha=\gamma|_I$ and $\beta=\gamma|_J$ and suppose, for contradiction, that $\alpha\cap\beta=\emptyset$; by simplicity this holds automatically, so what we must rule out is the boundary-separation hypothesis itself. Let $\Gamma$ and $\Gamma'$ be the two closed boundary arcs of $\partial H$ with endpoints $\gamma(a_1),\gamma(a_2)$, labeled so that $\gamma(b_2)\in\operatorname{int}\Gamma$ and $\gamma(b_1)\in\operatorname{int}\Gamma'$.

Fix an interior point $O$ of $H$ and let $\delta_\lambda(X)=O+\lambda(X-O)$ for $\lambda>1$. Define the outer detour
\[
\tau=[\gamma(a_1),\delta_\lambda(\gamma(a_1))]\cup\delta_\lambda(\Gamma)\cup[\delta_\lambda(\gamma(a_2)),\gamma(a_2)],
\]
the dilated copy of $\Gamma$ joined to its base points by radial segments. By convexity, $\tau$ meets $H$ exactly in the two points $\gamma(a_1),\gamma(a_2)$, and $\tau$ is a simple arc. Hence $C=\alpha\cup\tau$ is a Jordan curve: $\alpha$ is simple because $\gamma$ is, and $\alpha\cap\tau$ consists exactly of the two common endpoints.

The ray from $O$ through $\gamma(b_1)$, beyond $\gamma(b_1)$, leaves $H$ immediately and meets neither $\delta_\lambda(\Gamma)$ (it is the dilation ray of a point of $\operatorname{int}\Gamma'$) nor the two radial connector segments (distinct rays from $O$) nor $\alpha\subset H$. Hence $\gamma(b_1)$ lies in the unbounded component of $\R^2\setminus C$. The analogous ray beyond $\gamma(b_2)$ crosses $C$ exactly once, transversally in $\delta_\lambda(\Gamma)$ at the dilation image of $\gamma(b_2)$, so $\gamma(b_2)$ lies in the bounded component. Thus $\gamma(b_1)$ and $\gamma(b_2)$ lie in different components of $\R^2\setminus C$.

The subarc $\beta$ is a connected set joining them. By construction $\beta\cap\tau=\emptyset$ (points of $\beta$ lie in $H$ and differ from $\gamma(a_1),\gamma(a_2)$), so $\beta$ must meet $\alpha$. This contradicts the simplicity of $\gamma$ on disjoint parameter intervals and proves the lemma.
\end{proof}

\begin{proof}[Proof of Theorem~\ref{thm:imported-support-contact}]
Since $\gamma$ is a simple polygonal arc that is not contained in a line, $H=\conv(\gamma)$ is a convex polygon with nonempty interior; this is used in part (2).

(1) For all but finitely many directions $\theta$, the exposed face $H\cap L_\theta$ is a single vertex $v$ of the polygon $H$; since $\gamma$ is simple, the position $v$ corresponds to a unique parameter $t_v$, so $C_\theta=\{t_v\}$ and $T(\theta)=\{t_v\}$ on the whole open arc of directions exposing $v$. The finitely many exceptional directions expose edges of $H$; there $T(\theta)$ records the first and last contact parameters on the edge. Collinear subarcs of $\gamma$ lying on an edge of $H$, and repeated visits to the same exposed face, are handled by exactly this first/last-contact convention: only the two endpoint parameters of $C_\theta$ enter $T(\theta)$. Hence the graph of $T$ is a step function whose plateau levels are parameters of hull-vertex positions, with two-valued jumps at the edge directions.

(2) Fix $s$ with $\gamma(s)\in\partial H$. The set of directions exposing the point $\gamma(s)$ is the closed arc of outward normals in the normal cone at $\gamma(s)$, which is a single direction if $\gamma(s)$ is interior to an edge and an arc of length $\pi-\iota$ if $\gamma(s)$ is a vertex with interior angle $\iota>0$. Since $H$ has nonempty interior, $\iota>0$, so the arc has length less than $\pi$.

(3) Let $\hat t=\min\{t_v: v\ \text{vertex of}\ H\}$ be the smallest vertex parameter and fix a direction $\hat\theta$ in the interior of the exposure arc of the corresponding vertex $\hat v$, so $T(\hat\theta)=\{\hat t\}$ and $\hat t$ is the global minimum level of $T$. Suppose, for contradiction, that some selection from $T$ fails to be nondecreasing-then-nonincreasing over $[\hat\theta,\hat\theta+2\pi]$. By the step structure (1), there are then directions $\hat\theta<\theta_1<\theta_2<\theta_3<\hat\theta+2\pi$, each in the interior of a vertex plateau, with single values $s_i=T(\theta_i)$ satisfying
\[
s_1>s_2<s_3 .
\]
First, $s_2>\hat t$: otherwise $s_2=\hat t$, the $\theta_2$-vertex coincides with $\hat v$ (equal levels of a simple arc are equal parameters, hence equal positions), and by (2) the whole interval $[\hat\theta,\theta_2]$ exposes $\hat v$, forcing $s_1=\hat t<s_1$, a contradiction. Second, $s_1\ne s_3$: otherwise the $\theta_1$- and $\theta_3$-vertices coincide and (2) makes $T$ constant on $[\theta_1,\theta_3]\ni\theta_2$, contradicting $s_2<s_1$.

The four contact points $\hat v=\gamma(\hat t)$, $\gamma(s_1)$, $\gamma(s_2)$, $\gamma(s_3)$ are distinct (their parameters $\hat t<s_2<\min(s_1,s_3)$, $s_1\neq s_3$ are distinct) and exposed at the directions $\hat\theta<\theta_1<\theta_2<\theta_3$, so by Lemma~\ref{lem:cyclic-exposure} they occur in this cyclic order along $\partial H$. Consider the disjoint parameter intervals
\[
I=[\hat t,\;s_2],
\qquad
J=[\min(s_1,s_3),\;\max(s_1,s_3)] .
\]
The endpoints of $\gamma|_I$ are $\hat v$ and $\gamma(s_2)$, which by the cyclic order separate $\gamma(s_1)$ from $\gamma(s_3)$, the endpoints of $\gamma|_J$, on $\partial H$. Lemma~\ref{lem:interleaving} forbids this configuration. Hence no zigzag exists and every selection from $T$ is unimodal over the period, proving (3).
\end{proof}

The interleaving consequences in the normalized $\Lambda$-position follow. Recall the closed witness convention: $P_{-3}=\gamma(t_{-3})$ and $P_3=\gamma(t_3)$ are the first and last contacts on the lower horizontal support $L_0$, $P_0=\gamma(t_0)$ is a top contact with $t_{-3}<t_0<t_3$, and the frame is reflected if necessary so that $x(P_{-3})\le x(P_3)$.

Table~\ref{tab:cyclic-order} records the cyclic normal-fan order of the exposed faces used below, in the convention $n_\theta=(\sin\theta,-\cos\theta)$ of Section~\ref{subsec:support-distance}. The counterclockwise boundary order of the listed contacts along $\partial H$ follows the listed sweep order of directions by Lemmas~\ref{lem:cyclic-exposure} and~\ref{lem:cyclic-exposure-same-face}; in particular the $120^\circ$ and $150^\circ$ witnesses lie on the right upper boundary chain, between the lower face and the top face, in exactly this order.

\begin{table}[t]
\centering
\caption{Cyclic normal-fan order of the exposed faces used in Corollary~\ref{cor:role-trichotomy}. The sweep is counterclockwise (increasing direction $\theta$), anchored at the lower face; positions~1--4 are the positive side and positions~4--6 its reflected analogue.}\label{tab:cyclic-order}
\scriptsize
\setlength{\tabcolsep}{3pt}
\renewcommand{\arraystretch}{1.14}
\begin{tabularx}{\textwidth}{@{}cll>{\raggedright\arraybackslash}X@{}}
\toprule
\textbf{Sweep pos.} & \textbf{Line direction $\theta$} & \textbf{Support normal $n_\theta$} & \textbf{Exposed face / selected witness} \\
\midrule
1 & $0^\circ$ & $(0,-1)$ & lower face: $P_{-3}$, then $P_3$, ordered by $x(P_{-3})\le x(P_3)$ \\
2 & $120^\circ$ & $(\sqrt3/2,\,1/2)$ & $\gamma(w_{120^\circ})$, selected $L_{120^\circ}$ witness, right upper chain \\
3 & $150^\circ$ & $(1/2,\,\sqrt3/2)$ & $\gamma(w_{150^\circ})$, selected $L_{150^\circ}$ witness, right upper chain \\
4 & $180^\circ$ & $(0,\,1)$ & top face: $P_0$ \\
5 & $-150^\circ\;(\equiv210^\circ)$ & $(-1/2,\,\sqrt3/2)$ & $\gamma(w_{-150^\circ})$, left upper chain \\
6 & $-120^\circ\;(\equiv240^\circ)$ & $(-\sqrt3/2,\,1/2)$ & $\gamma(w_{-120^\circ})$, left upper chain \\
\bottomrule
\end{tabularx}
\end{table}

Thus the counterclockwise boundary order of the contacts used below is
\[
P_{-3},\;P_3,\;\gamma(w_{120^\circ}),\;\gamma(w_{150^\circ}),\;P_0,\;\gamma(w_{-150^\circ}),\;\gamma(w_{-120^\circ}),
\]
returning to $P_{-3}$. Intermediate special directions ($30^\circ$, $60^\circ$, $90^\circ$ and their reflections) expose contacts---for example $P_6$ between $P_3$ and $\gamma(w_{120^\circ})$---in the same sweep order; they are not needed in Corollary~\ref{cor:role-trichotomy}. The passage from strict vertex contacts to exposed support segments and weak equalities is the closed witness convention: if an exposed face is a segment, Lemma~\ref{lem:cyclic-exposure-same-face} orders its selected witnesses internally by the boundary order of the face, and weakly against all other listed contacts, by perturbing the support direction within the normal cone of the face and passing to the limit; if two selected witnesses coincide, every strict separation statement involving them degenerates to the corresponding weak inequality, which is exactly what the closed SOCP rows record. Table~\ref{tab:cyclic-order} therefore remains valid verbatim with ``weak order'' in place of ``order'' whenever a coincidence or an exposed segment occurs.

\begin{corollary}[Role trichotomy, inner order, and mixed-pair exclusion]\label{cor:role-trichotomy}
Let $\gamma$ be a simple polygonal arc in the normalized $\Lambda$-position, and for $\theta\in\{150^\circ,120^\circ\}$ let $w_\theta$ denote the parameter of a selected $L_\theta$ witness. Then, with all inequalities weak and coincidences treated as weak-order degeneracies:
\begin{enumerate}
\item[(i)] $w_\theta\ge t_0$; hence each positive-side witness lies on the middle chain $\gamma_{P_0P_3}$ (inner role) or on the right tail $\gamma_{P_3Q}$ (outer role);
\item[(ii)] if both witnesses are inner, then $w_{150^\circ}\le w_{120^\circ}$;
\item[(iii)] the strictly mixed pair is impossible: if $w_{150^\circ}>t_3$ then $w_{120^\circ}\ge t_3$.
\end{enumerate}
The reflected statements hold for $\theta\in\{-150^\circ,-120^\circ\}$ on the negative side, with $w_\theta\le t_0$ and the left tail in place of the right tail.
\end{corollary}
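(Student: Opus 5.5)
The plan is to derive all three parts from the counterclockwise boundary order recorded in Table~\ref{tab:cyclic-order}:
\[
P_{-3},\;P_3,\;\gamma(w_{120^\circ}),\;\gamma(w_{150^\circ}),\;P_0,\;\gamma(w_{-150^\circ}),\;\gamma(w_{-120^\circ}),
\]
taken weakly via Lemmas~\ref{lem:cyclic-exposure} and~\ref{lem:cyclic-exposure-same-face}. We then apply the interleaving exclusion, Lemma~\ref{lem:interleaving}, to suitable pairs of disjoint parameter intervals. Throughout we first argue with strict inequalities and distinct points. Coincidences and exposed segments are then absorbed by the closed witness convention: a strict separation degenerates to the corresponding weak inequality. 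Each claim is proved by assuming its negation strictly and producing two disjoint parameter intervals whose endpoint pairs separate each other on $\partial H$.

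\emph{Part (i).} Suppose $w=w_\theta<t_0$ for $\theta\in\{120^\circ,150^\circ\}$. There are two positions of $w$ to consider.
\begin{itemize}
\item If $w<t_{-3}$, take $I=[w,t_{-3}]$ and $J=[t_0,t_3]$. On $\partial H$ the endpoints $\gamma(w)$ and $P_{-3}$ separate $P_3$ from $P_0$. Indeed, the order $P_{-3},P_3,\gamma(w),P_0$ places $P_3$ on one boundary arc between $P_{-3}$ and $\gamma(w)$, and $P_0$ on the other. Lemma~\ref{lem:interleaving} then gives a contradiction.
\item If $t_{-3}<w<t_0$, take $I=[t_{-3},w]$ and $J=[t_0,t_3]$. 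The same cyclic separation applies, again contradicting Lemma~\ref{lem:interleaving}.
\end{itemize}
Hence $w\ge t_0$, and the witness lies on $\gamma_{P_0P_3}$ or on $\gamma_{P_3Q}$.

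\emph{Part (ii).} Suppose $t_0\le w_{120}<w_{150}\le t_3$. Take $I=[t_{-3},w_{120}]$, which contains $t_0$, and $J=[w_{150},t_3]$. The cyclic order $P_{-3},P_3,\gamma(w_{120}),\gamma(w_{150})$ shows that $\{P_{-3},\gamma(w_{120})\}$ separates $\{P_3,\gamma(w_{150})\}$, which is impossible. One could equally invoke unimodality from Theorem~\ref{thm:imported-support-contact}(3), anchored at the lower face. I prefer the direct interleaving argument because it avoids choosing the anchor vertex.

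\emph{Part (iii).} Suppose $w_{150}>t_3$ and $w_{120}<t_3$; by (i), $t_0\le w_{120}<t_3$. Take $I=[w_{120},t_3]$ and $J=[t_3',w_{150}]$. The endpoint $t_3$ is shared, so this pair needs adjusting. Instead use $I=[t_{-3},w_{120}]$ and $J=[t_3,w_{150}]$, which are disjoint. In the cyclic order $P_{-3},P_3,\gamma(w_{120}),\gamma(w_{150})$, the pair $\{P_{-3},\gamma(w_{120})\}$ separates $P_3$ from $\gamma(w_{150})$. This contradicts Lemma~\ref{lem:interleaving}.

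The negative side follows by the reflection $x\mapsto -x$ combined with reversing the parameter. This exchanges the roles of $P_{\pm3}$ and reverses the sweep. The normalization $x(P_{-3})\le x(P_3)$ is only used weakly here, through Lemma~\ref{lem:cyclic-exposure-same-face}.

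The main obstacle is verifying the separation claims in degenerate configurations, for example when $P_3$ and $\gamma(w_{120})$ lie on a common exposed face, or when $w$ coincides with $t_0$ or $t_3$. In those cases the four points are not distinct, so Lemma~\ref{lem:interleaving} does not apply directly. I would handle this by perturbing the support directions within the normal cones (Lemma~\ref{lem:cyclic-exposure-same-face}) to obtain distinct vertex witnesses with strict order, applying the strict argument there, and passing to the limit. The conclusion is then a weak inequality, which is exactly what the statement asserts.
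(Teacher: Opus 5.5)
Your proposal is correct and follows essentially the paper's route: the counterclockwise order of Table~\ref{tab:cyclic-order} combined with Lemma~\ref{lem:interleaving} on disjoint parameter intervals, with part~(i) identical to the paper's. The differences are cosmetic. In (ii) and (iii) you pair $\gamma(w_{120^\circ})$ with $P_{-3}$ via $I=[t_{-3},w_{120^\circ}]$, where the paper pairs it with $P_0$ via $I=[t_0,w_{120^\circ}]$; the paper's choice uses four distinct directions and so avoids Lemma~\ref{lem:cyclic-exposure-same-face} there. Your reflection combined with parameter reversal for the negative side is a slightly more careful version of the paper's bare reflection step, since it preserves the normalization $x(P_{-3})\le x(P_3)$.
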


\begin{proof}
All three parts instantiate Lemma~\ref{lem:interleaving}; in each case the four contact points are distinct in the strict configuration to be excluded, and boundary positions are ordered by Lemmas~\ref{lem:cyclic-exposure} and~\ref{lem:cyclic-exposure-same-face} via their exposure directions, with the lower face exposed at $0^\circ$, the top face at $180^\circ$, and the $120^\circ$ and $150^\circ$ witnesses on the right upper boundary chain in the normal-fan order tabulated in Table~\ref{tab:cyclic-order}.

(i) Suppose first $w_\theta<t_{-3}$ strictly. Take $I=[w_\theta,t_{-3}]$ with endpoint contacts $\gamma(w_\theta)$ (right upper boundary) and $P_{-3}$ (lower face), and $J=[t_0,t_3]$ with endpoint contacts $P_0$ (top face) and $P_3$ (lower face). The intervals are disjoint, and the cyclic boundary order is $P_{-3},P_3,\gamma(w_\theta),P_0$ by Lemma~\ref{lem:cyclic-exposure-same-face} on the lower face and Lemma~\ref{lem:cyclic-exposure} for the subsequent directions $\theta\in\{120^\circ,150^\circ\}$ and $180^\circ$. The pair $\{\gamma(w_\theta),P_{-3}\}$ separates $\{P_0,P_3\}$, so Lemma~\ref{lem:interleaving} applies and excludes the configuration. Suppose next $t_{-3}<w_\theta<t_0$ strictly. Take $I=[t_{-3},w_\theta]$ with endpoints $P_{-3}$ and $\gamma(w_\theta)$, and $J=[t_0,t_3]$ as before; the same cyclic order shows the endpoint pairs interleave, again a contradiction. Boundary coincidences ($w_\theta=t_{-3}$ or $w_\theta=t_0$) are weak degeneracies. Hence $w_\theta\ge t_0$.

(ii) Suppose both witnesses are inner and $w_{150^\circ}>w_{120^\circ}$ strictly. Take $I=[t_0,w_{120^\circ}]$ with endpoints $P_0$ (top) and $\gamma(w_{120^\circ})$ ($120^\circ$-contact), and $J=[w_{150^\circ},t_3]$ with endpoints $\gamma(w_{150^\circ})$ ($150^\circ$-contact) and $P_3$ (lower face). The intervals are disjoint, and the cyclic boundary order $P_3,\gamma(w_{120^\circ}),\gamma(w_{150^\circ}),P_0$ (directions $0^\circ,120^\circ,150^\circ,180^\circ$) makes the endpoint pairs interleave. Lemma~\ref{lem:interleaving} excludes this, so $w_{150^\circ}\le w_{120^\circ}$.

(iii) Suppose $w_{150^\circ}>t_3$ and $w_{120^\circ}<t_3$ strictly; by (i), $w_{120^\circ}\ge t_0$. Take $I=[t_0,w_{120^\circ}]$ with endpoints $P_0$ and $\gamma(w_{120^\circ})$, and $J=[t_3,w_{150^\circ}]$ with endpoints $P_3$ (lower face) and $\gamma(w_{150^\circ})$ ($150^\circ$-contact). The intervals are disjoint, and the same cyclic boundary order $P_3,\gamma(w_{120^\circ}),\gamma(w_{150^\circ}),P_0$ makes the endpoint pairs interleave, which Lemma~\ref{lem:interleaving} excludes. Hence $w_{120^\circ}\ge t_3$, a weak-order degeneracy at $P_3$ being the boundary case.

In each part, if two of the four named contact points coincide, the strict configuration degenerates and the corresponding weak inequality holds by the witness convention. The reflected statements follow by applying the vertical reflection, which reverses orientation, exchanges the direction pairs, and preserves Lemmas~\ref{lem:cyclic-exposure} and~\ref{lem:interleaving}.
\end{proof}

Corollary~\ref{cor:role-trichotomy} also excludes the crossed configuration in which a positive-side witness has parameter on the left tail while a negative-side witness has parameter on the right tail: both are instances of part (i) and its reflection.

\subsection{Proofs of the support-order and case-tree lemmas}\label{app:order-proofs}

\begin{proof}[Proof of Proposition~\ref{prop:support-input}]
We first prove the statement for the simple polygonal approximants, using the interleaving machinery of Appendix~\ref{app:unimodality}, and then pass to the weak limit.

Let $\gamma$ be one of the simple polygonal approximants in the normalized $\Lambda$-position, with the closed witness convention of Section~\ref{subsec:lambda}: $P_{-3}$ and $P_3$ are the first and last contacts on the lower support $L_0$, and $P_0$ is a top contact between them, so that
\[
t(P_{-3})<t(P_0)<t(P_3),
\qquad
x(P_{-3})\le x(P_3).
\]
Part (1) is Corollary~\ref{cor:role-trichotomy}(i) and (iii): every selected positive-side witness at $150^\circ$ or $120^\circ$ has parameter weakly after $t(P_0)$, hence lies on the middle chain $\gamma_{P_0P_3}$ (inner role) or on the right tail $\gamma_{P_3Q}$ (outer role), and the strictly mixed pair (outer at $150^\circ$, inner at $120^\circ$) is impossible. Coincidence of a witness with $P_3$ is recorded as a weak equality and is absorbed as a degeneracy of an adjacent alternative, as in the closed witness convention. The negative side is the reflection of this argument across the vertical axis, which exchanges the roles of $150^\circ,120^\circ$ with $-150^\circ,-120^\circ$ and reverses the parameter inequalities accordingly.

Part (2): when both positive-side roles are inner, Corollary~\ref{cor:role-trichotomy}(ii) gives $t(P_1)\preceq t(P_2)$, and the inner-role definition gives $t(P_0)\preceq t(P_1)$ and $t(P_2)\preceq t(P_3)$; together with the reflected inequalities on the negative side and $t(P_{-3})\preceq t(P_{-2})$, this is exactly the displayed chain in the all-inner branch. On a side with outer roles, no relation between the tail witnesses and $P_{\pm3}$ is asserted; the admissible weak tail orders are enumerated in Lemmas~\ref{lem:case2-order-reduction} and~\ref{lem:case3-order-reduction} and certified exhaustively.

Finally, all statements above are finite lists of closed weak parameter-order relations and support-face incidences for selected witnesses. Lemma~\ref{lem:closed-contact-order} passes them to the weak limiting obstruction, with coincidences appearing as weak equalities.
\end{proof}

\begin{proof}[Proof of Lemma~\ref{lem:middle-skeleton}]
Let $K=\conv(\gamma)$. In the present normalization, $P_{-3}$ and $P_3$ lie on the lower horizontal support line, while $P_0$ lies on the upper horizontal support line. The relevant exposed faces from $P_{-3}$ to $P_3$ through $P_0$ occur on the upper boundary chain of $K$. Convexity orders those faces by the normal fan; exposed segments and coincident faces are allowed as weak equalities.

By Proposition~\ref{prop:support-input}(2), the named middle witness contacts with inner roles satisfy
\[
P_{-3}\preceq P_{-2}\preceq P_{-1}\preceq P_0\preceq P_1\preceq P_2\preceq P_3,
\]
restricted to the witnesses present in the branch under consideration. This proves only the ordered finite support skeleton, which is the statement of the lemma; no relation between tail witnesses and $P_{\pm3}$ is asserted.
\end{proof}

\begin{proof}[Proof of Lemma~\ref{lem:order}]
By Lemma~\ref{lem:middle-skeleton}, the selected middle contacts already occur in the weak parameter order
\[
P_{-3},\,P_{-2},\,P_{-1},\,P_0,\,P_1,\,P_2,\,P_3.
\]
If a support face is a segment or two adjacent contacts coincide, the convention uses the parameter endpoints of that face and gives a weak equality in the same order.

The remaining special contacts cannot interleave with this middle chain. The directions $30^\circ$ and $60^\circ$ expose the lower/right side after the lower contact $P_3$ in the normal-fan sweep, so their named positive contacts are tail roles $P_4$ and $P_5$ except for the endpoint degeneracy at $P_3$. The directions $120^\circ$ and $150^\circ$ either expose the inner roles $P_2,P_1$ on $\gamma_{P_0P_3}$ or the outer right-tail roles $P_7,P_8$, as recorded in Lemma~\ref{lem:contact-alternatives}; the negative directions are reflected. Therefore no tail role can occur between two of the ordered middle contacts, which proves the lemma.
\end{proof}

\begin{proof}[Proof of Lemma~\ref{lem:contact-alternatives}]
We prove the contact-alternative lemma in the present oriented-support notation.
Consider the positive side; the negative side follows by reflecting the normalized configuration across the vertical axis.

By Proposition~\ref{prop:support-input}(1), equivalently Corollary~\ref{cor:role-trichotomy}(i), the selected $L_{150^\circ}$ witness has parameter weakly after $t(P_0)$: it is either the inner contact $P_1$ on $\gamma_{P_0P_3}$ or the right-tail contact $P_8$ on $\gamma_{P_3Q}$. Likewise the selected $L_{120^\circ}$ witness is either $P_2$ on $\gamma_{P_0P_3}$ or $P_7$ on $\gamma_{P_3Q}$. The witness convention records weak equalities when an exposed face is a segment or when a witness coincides with $P_0$ or $P_3$.

The strictly mixed pair $(P_8,P_2)$---the $L_{150^\circ}$ contact strictly on the tail while the $L_{120^\circ}$ contact is strictly inner---is impossible by Corollary~\ref{cor:role-trichotomy}(iii): such a configuration would force two disjoint subarcs of $\gamma$ whose endpoint contacts interleave on the boundary of $\conv(\gamma)$, contradicting Lemma~\ref{lem:interleaving} and the simplicity of $\gamma$. Equivalently, in the contrapositive form used in the case tree: if the $L_{120^\circ}$ contact is inner, then the $L_{150^\circ}$ contact is inner as well.
When an exposed face contains more than one selected witness, this gives a weak-order degeneracy of one of the same alternatives and introduces no additional branch. The statement passes to weak support-reduced obstructions by Lemma~\ref{lem:closed-contact-order}.
\end{proof}

\begin{proof}[Proof of Lemma~\ref{lem:wedge}]
Let
\[
\phi(Y)=|XY|+|YZ|,
\qquad Y\in\partial W=\ell_1\cup\ell_2.
\]
For every boundary point $Y$, the triangle inequality gives
\[
|XY|+|YZ|\ge |XZ|.
\]
Equality can occur only when $Y$ lies on the segment $[X,Z]$. Since $X\in\ell_1$ and $Z\in\ell_2$, the full-boundary minimum is therefore attained at the degenerate boundary contacts $Y=X$ and $Y=Z$, which are points of $[X,Z]\cap\partial W$.

The lemma is used only when the closed branch model contains the degenerate chain obtained by deleting the kink. It does not assert that the minimum over an arbitrary closed subinterval of a wedge side occurs at an endpoint.
\end{proof}

\begin{proof}[Proof of Lemma~\ref{lem:subcase11-tail-reduction}]
We give the right-side argument; the left side is its reflection. In Subcase~1.1 the two $T_W$ sides through the common $30^\circ$ corner cut the right tail into a wedge sector whose boundary rays contain the relevant middle contact $P_{2.5}$ and the possible later tail contacts. There are two alternatives, according as the corresponding $15^\circ$ support contact occurs at a point $P_{3.5}\in\gamma_{P_3P_6}$ or at a later point $P_{6.5}\in\gamma_{P_6Q}$.

Parameterize the right active ray from $W$ by
\[
Z_+(s)=W+s e_{-75^\circ},\qquad s\ge0.
\]
For this ray the relevant escape functional is
\[
f_+(s)=d(\overrightarrow{WZ_+(s)},-75^\circ)
=\langle s e_{-75^\circ},e_{-75^\circ}\rangle=s,
\]
so $f_+'(s)=1$ and the closed escape half-plane on this ray is exactly $s\ge b$. Thus a later point $Y=Z_+(s_Y)$ with $s_Y\ge b$ does not by itself imply that the nearest vertical delimiter $Z=P_6=Z_+(s_Z)$ satisfies the same row; the replacement is made only in the sub-branch where $s_Z\ge b$.

If the first alternative occurs, or if the later-contact alternative has $s_Z\ge b$, the chain used in Subcase~1.1a records the point $P_{3.5}$, allowing the degenerate choice $P_{3.5}=P_6$ in the latter case. Apply Lemma~\ref{lem:wedge} to the boundary kink $X$--$Y$--$Z$, where $Y$ is the later tail contact and $Z=P_6$ is the nearest vertical delimiter on the same active boundary ray. Deleting the kink gives the segment $X$--$Z$, and the triangle inequality gives no larger polygonal length. The displayed calculation gives the row preservation: because $s_Z\ge b$, the degenerate contact still satisfies the same closed escape row. In the closed SOCP model these are exactly the constraints
\[
d(\overrightarrow{WP_{3.5}},-75^\circ)\ge b
\qquad\text{and}\qquad
d(\overrightarrow{WP_{-3.5}},-105^\circ)\ge b
\]
together with the two side-incidence equations for $P_{\pm2.5}$. Therefore the later-contact alternative is already covered by the closed model for the first alternative.

If $s_Z<b$, then the vertical delimiter is not used as a substitute for the escaping point. Instead the late contact is retained. The left active ray is parameterized by
\[
Z_-(s)=W+s e_{-105^\circ},\qquad
d(\overrightarrow{WZ_-(s)},-105^\circ)=s,
\]
so the same calculation applies on the reflected side. If exactly one side is late, the one-sided closed model 1.1b, or its reflection, is used: it keeps the active late-side incidence, keeps the opposite delimiter-side escape row, and keeps the side-containment row on the nonactive side. If both sides are late, no opposite pre-delimiter escape row is substituted. The direct model 1.1c keeps the chain
\[
P_{-6.5},P_{-6},P_{-3},P_{-2},P_{-1},P_1,P_2,P_3,P_6,P_{6.5},
\]
the two side-incidence rows for $P_{-6.5}$ and $P_{6.5}$, and the two late escape rows
\[
d(\overrightarrow{WP_{-6.5}},-105^\circ)\ge b,\qquad
d(\overrightarrow{WP_{6.5}},-75^\circ)\ge b .
\]
Table~\ref{tab:wedge-audit} records the four routing rows and identifies the preserved and omitted constraints. Coincident contacts are included as degenerate closed-boundary cases.
\end{proof}

\begin{proof}[Proof of Lemma~\ref{lem:case2-order-reduction}]
Consider the right tail in Case~2. The active right-tail roles used later are the $L_{30^\circ}$ contact $P_4$, the forced $L_{120^\circ}$ escape contact $P_7$, and the outer $L_{150^\circ}$ contact $P_8$. All three lie on the tail interval $\gamma_{P_3Q}$, with coincident exposed-face witnesses allowed in the closed branch. A weak parameter order of three named contacts is therefore one of the six orders
\[
\begin{array}{c|c}
\text{row} & \text{right-tail order after }P_3\\
\hline
C2\text{-}1 & P_4P_7P_8\\
C2\text{-}2 & P_8P_7P_4\\
C2\text{-}3 & P_4P_8P_7\\
C2\text{-}4 & P_7P_4P_8\\
C2\text{-}5 & P_7P_8P_4\\
C2\text{-}6 & P_8P_4P_7 .
\end{array}
\]
The first two rows are the representative orders used in 2.1a--b and 2.2a--b; the four remaining right-tail orders are the Case~2 raw-order SOCP models in Table~\ref{tab:raw-order-cert}.

In Subcase~2.2 the additional left-tail roles are $P_{-7}$ and $P_{-6}$, both on the interval before $P_{-3}$. Hence their weak order is either
\[
P_{-7}P_{-6}P_{-3}
\qquad\text{or}\qquad
P_{-6}P_{-7}P_{-3}.
\]
The first left order gives the representative 2.2 models and the right-tail raw-order additions; the second is paired with all six right-tail orders in the raw-order certificate. This proves the stated enumeration without any endpoint replacement.
\end{proof}

\begin{proof}[Proof of Lemma~\ref{lem:case3-order-reduction}]
On the right tail in Case~3, the active roles are $P_4,P_5,P_7,P_8$, all lying on the closed interval $\gamma_{P_3Q}$. A weak parameter order of these four named witness contacts is represented by at least one of the $24$ strict linear orders; coincident contacts are handled as weak degenerations of those strict orders. The reflected left tail is treated in the same way, with the reflected labels $P_{-4},P_{-5},P_{-7},P_{-8}$.

The raw-order certificate file contains all $24\times24$ left-right order pairings except the three representative pairings already included as 3a--3c in the main representative model file. Those three representative pairings are $(L_1,R_1)$, $(L_1,R_{24})$, and $(L_{24},R_{24})$ in the appendix branch ledger. Hence every Case~3 branch is certified by a closed SOCP model without using any geometric exclusion of tail orders.
\end{proof}

\clearpage
\end{appendices}

\end{document}